\documentclass{article}
\usepackage[utf8]{inputenc}

\usepackage{amsmath,amssymb,amsthm,bbm}
\usepackage{mathtools}
\usepackage{amsfonts}
\usepackage{enumerate}
\usepackage{xcolor}
\usepackage[margin=1in]{geometry}

\newcommand\nc\newcommand
\nc{\bs}{\boldsymbol}
\nc{\com}[1]{\textcolor{red}{[#1]}}

\nc{\PP}{\mathbb{P}}
\newcommand{\R}{\mathbb R}
\nc{\ind}{\mathbbm{1}}
\newcommand{\E}{\mathbb E}

\newcommand{\Cov}{\mathrm{Cov}}

\newcommand{\Lip}{\mathrm{Lip}}

\newcommand{\bary}{\mathrm{bar}}
\newcommand{\Tr}{\mathrm{Tr}}
\newcommand{\e}{\mathrm e}

\newcommand{\comment}[1]{}
\DeclareMathOperator{\Ent}{Ent}
\DeclareMathOperator{\Var}{Var}
\DeclarePairedDelimiter{\abs}{\lvert}{\rvert}
\DeclarePairedDelimiter\norm{\lVert}{\rVert}

\newtheorem{thm}{Theorem}
\newtheorem{lem}[thm]{Lemma}
\newtheorem{prop}[thm]{Proposition}
\newtheorem{cor}[thm]{Corollary}
\newtheorem{conj}[thm]{Conjecture}

\theoremstyle{remark}

\theoremstyle{definition}
\newtheorem{defi}[thm]{Definition}

\newenvironment{rmk}{\textit{Remark :}}{}

\title{On the Log-Sobolev Constant of Log-Concave Vectors}
\author{Pierre Bizeul}
\date{}

\begin{document}

\maketitle
\begin{abstract}
It is well known that if a random vector satisfies a log-Sobolev inequality, all of its marginals have subgaussian tails. In the spirit of the KLS conjecture, we investigate whether this implication can be reversed under a log-concavity assumption. In the general setting, we improve on a result of Bobkov, establishing the best dimension dependent bound on the log-Sobolev constant of subgaussian log-concave measures, and we investigate some special cases.
\end{abstract}

\section{Introduction and results}

A Borel probability measure $\mu$ on $\R^n$ is said to satisfy a logarithmic Sobolev inequality with constant $\rho>0$ if for any smooth function $f:\R^n\to\R$, one has
\begin{equation}\label{eq:logsob_def}
\Ent_\mu(f^2) \leq 2\rho^2 \int_{\R^n} \abs{\nabla f}^2\, d\mu,
\end{equation}
where for a nonnegative function $g$,
\[
\Ent_\mu(g) = \E_\mu[g\log g] - \E_\mu[g]\log \E_\mu[g],
\]
and $\E_\mu(h) = \int_{\R^n} h\, d\mu$ denotes expectation with respect to $\mu$. 
Here $\abs{\cdot}$ denotes the Euclidean norm.
We denote by $\rho_{LS}(\mu)$ the optimal constant $\rho$ such that \eqref{eq:logsob_def} holds.
It is well known that the log-Sobolev inequality implies Gaussian concentration: the Herbst argument yields a quadratic bound on the logarithmic Laplace transform of Lipschitz functions.
\begin{equation}\label{eq93}
    \log \int e^{s f} \, d\mu 
    \ \leq\ 
    \frac{\rho^2 s^2 \, \vert f \vert_{\mathrm{Lip}}^2}{2}
    \ +\ s \int_{\mathbb{R}^n} f \, d\mu,
    \qquad s \in \mathbb{R},
\end{equation}
where $\vert f \vert_{\Lip}$ is the Lipschitz constant of $f$. Markov's inequality then implies Gaussian concentration of $f$ around its mean,
\begin{equation}{\label{eq:gaussian_conc}}
    \mu(\vert f - \E_\mu(f)\vert\geq t) \leq 2e^{-\frac{t^2}{2 \vert f \vert_{\Lip}^2\rho^2}}.
\end{equation}
Taking $f$ to be a linear form, we deduce that every one-dimensional marginal of $\mu$ is subgaussian. Classically, the log-Laplace estimate~\eqref{eq93} can be reformulated in a way that will be convenient for our purposes. Namely, if $f$ is a $1$-Lipschitz function satisfying $\int f\, d\mu = 0$, then 
\begin{equation}\label{eq101}
    \E \e^{ \frac{f^2}{4\rho_{LS}^2(\mu)}} \leq 2.
\end{equation}
Since we will use to this fact again later in the introduction, we recall its brief proof below.
Let $X$ be a random variable satisfying a log-Laplace bound of the form $\E e^{sX} \leq e^{s^2/2}$ for all $s\in\R$. Let $Z$ be a standard Gaussian, independent from $X$. Recall the elementary identities $\E e^{\lambda Z} = e^{\lambda^2/2}$ for $\lambda\in\R$ and $\E e^{t Z^2} = (1-2t)^{-1/2}$, for $t<1$. By Fubini,
$$\E e^{X^2/4} = \E e^{X Z/\sqrt{2}} \leq \E e^{Z^2/4} = \sqrt{2} < 2,$$
which is what we wanted.

In a related direction, we say that $\mu$ satisfy a Poincaré inequality with constant $K>0$ if for any smooth function $f: \R^n\mapsto\R$, one has
\begin{equation}{\label{eq:Poincaré_def}}
\Var_\mu(f) \leq K^2 \ \int_{\R^n} \abs{\nabla f}^2 d\mu,
\end{equation}
where $Var_\mu(f) = \E_\mu(f^2) - \E_\mu(f)^2$ is the variance of $f$. We denote by $C_{P}(\mu)$ the optimal constant $K$ such that \eqref{eq:Poincaré_def} holds. It is classical that the log-Sobolev inequality \eqref{eq:logsob_def} is stronger than \eqref{eq:Poincaré_def} : $C_{P}(\mu) \leq \rho_{LS}(\mu)$.
Furthermore, while log-Sobolev implies Gaussian concentration, it was first observed by Gromov and Milman \cite{gromov1983topological} that Poincaré inequalities implies exponential concentration. Namely, using e.g \cite[Proposition~4.1]{bobkov1997poincare} one can deduce that for any $1$-Lipschitz function with $\int fd\mu=0$ 
\begin{equation}\label{eq109}
    \E_\mu e^{\frac{\abs{f}}{3C_P(\mu)}} \leq 2.
\end{equation}
By Markov's inequality, this implies that Lipschitz functions are exponentially concentrated around their mean :
$$\mu(\vert f - \E_\mu(f)\vert\geq t) \leq 2e^{-\frac{t}{3C_P(\mu)}}.$$
It will be convenient to express Gaussian and exponential concentration in terms of Orlicz norms. Let $\psi:\mathbb{R}_+\to\mathbb{R}_+$ be a Young function, that is a nonconstant convex increasing function with $\psi(0)=0$. For a real random variable $Y$, the corresponding Orlicz norm is defined by
\[
\|Y\|_{\psi} = \inf\Bigl\{\, t>0 : \E\,\psi\!\left(\frac{|Y|}{t}\right) \le 1 \Bigr\}.
\]
Let now $\mu$ be a probability measure on $\R^n$, and let $g:\R^n\to\R$ be a measurable function. If $X\sim\mu$, we set
\[
\|g\|_{L^{\psi}(\mu)} := \|g(X)\|_{\psi}.
\]
We will recurrently use the functions $\psi_1(t) = e^{t}-1$ and $\psi_2(t) = e^{t^2}-1$. The bound \eqref{eq109} may be rephrased as 
\begin{equation}\label{eq:poinc_psi}
\norm{f-E_\mu f}_{\psi_1}\leq 3C_P(\mu),
\end{equation}for any $1$-Lipschitz function $f$. On the other hand if $\mu$ satisfies a log-Sobolev inequality, the bound \eqref{eq101} is equivalent to
\begin{equation}\label{eq:log_psi2}
\norm{f-E_\mu f}_{\psi_2}\leq 2\rho_{LS}(\mu).
\end{equation}

Not all measures may satisfy a Poincaré or log-Sobolev inequality. Even under good integrability assumptions, if the support of $\mu$ is disconnected, one may build a non-constant function whose gradient vanishes $\mu$ almost everywhere, violating \eqref{eq:Poincaré_def}, hence also \eqref{eq:logsob_def}. A general class of measures which avoid double-bump type distribution is the class of log-concave measures, that is, measures that write $d\mu = e^{-V(x)}dx$, for some convex $V:\R^n\mapsto\R\cup\{+\infty\}$. For such measures, the well-known KLS conjecture proposes that it is enough, up to a universal constant, to test linear functions in \eqref{eq:Poincaré_def}.

\begin{conj}[KLS]\label{conj:KLS}
There exists a constant $C>0$ such that for any $n\in \mathbb{N}$ and any log-concave probability measure $\mu$ on $\R^n$,
\begin{equation}\label{eq_104}
    C_P^2(\mu) \leq C \sup_{\theta \in \mathbb{S}^{n-1}}\Var_\mu\left(\langle\ .\ ,\ \theta\ \rangle\right). 
\end{equation}
\end{conj}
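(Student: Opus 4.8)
Conjecture~\ref{conj:KLS} is the celebrated KLS conjecture and is still open; the plan sketched here is the route that has come closest, namely Eldan's stochastic localization combined with a self-improving (bootstrap) argument.

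\textbf{Reduction to the isotropic case.} After an affine change of coordinates we may assume $\mu$ is isotropic, $\Cov(\mu)=\Id_n$, so that the right-hand side of \eqref{eq_104} is the constant $C$ and the goal becomes the dimension-free bound $C_P(\mu)\le C$. Let $\psi_n$ denote the supremum of $C_P(\nu)$ over all isotropic log-concave $\nu$ on $\R^k$ with $k\le n$; we want $\sup_n\psi_n<\infty$.

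\textbf{Stochastic localization.} Introduce the measure-valued process $(\mu_t)_{t\ge 0}$ with $d\mu_t(x)\propto \exp\!\big(\langle c_t,x\rangle-\tfrac t2\abs{x}^2\big)\,d\mu(x)$, where $c_0=0$ and $dc_t=dW_t+\bary(\mu_t)\,dt$ for a standard Brownian motion $W_t$. One checks that $t\mapsto\mu_t$ is a martingale, so $\E\mu_t=\mu$; that every $\mu_t$ is log-concave; and that the covariance $A_t=\Cov(\mu_t)$ solves an Itô equation with drift $-A_t^2$ and martingale part driven by the third-moment tensor of $\mu_t$. Because the Gaussian factor makes each $\mu_t$ uniformly log-concave with parameter $t$, Brascamp--Lieb gives $\Var_{\mu_t}(f)\le \tfrac1t\int\abs{\nabla f}^2\,d\mu_t$, and combining this with the martingale property and the variance decomposition along the process, a now-standard computation yields, for every locally Lipschitz $f$,
\begin{equation}
\Var_\mu(f)\ \le\ \Big(\tfrac1t+\sup_{s\le t}\E\norm{A_s}_{\op}\Big)\int_{\R^n}\abs{\nabla f}^2\,d\mu .
\end{equation}
Thus $C_P(\mu)^2\lesssim \tfrac1t+\sup_{s\le t}\E\norm{A_s}_{\op}$, and everything reduces to keeping the operator norm of $A_s$ bounded on a time interval of length of order $1$.

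\textbf{Bootstrapping the operator norm.} Fix $\theta\in\mathbb S^{n-1}$ and apply Itô to $\langle A_t\theta,\theta\rangle$: the drift $-\langle A_t^2\theta,\theta\rangle\le 0$ is harmless, while the quadratic variation is controlled, up to a bounded factor, by a fourth-moment quantity of the conditioned measure $\mu_t$, essentially $\sup_\theta\Var_{\mu_t}\!\big(\langle x-\bary(\mu_t),\theta\rangle^2\big)$. Here the induction hypothesis enters: rescaling $\mu_t$ by $A_t^{-1/2}$ produces an isotropic log-concave measure, so $C_P(\mu_t)\le \psi_n\norm{A_t}_{\op}^{1/2}$, and applying this Poincaré inequality to the quadratic $x\mapsto\langle x,\theta\rangle^2$ bounds that variance by $\lesssim \psi_n^2\norm{A_t}_{\op}^2$. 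Plugging this into the SDE, a Grönwall estimate together with a maximal inequality --- with a union bound over a net of directions and a time discretization to upgrade fixed-time control to uniform control --- keeps $\norm{A_t}_{\op}=O(1)$ for $t$ up to order $1/(\psi_n^2\log n)$. Combined with the previous step this closes a recursion which, unwound, yields $\psi_n=n^{o(1)}$ (Chen); sharper bookkeeping in the localization lowers this to polylogarithmic (Klartag--Lehec) and then to $O(\sqrt{\log n})$ (Klartag).

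\textbf{Main obstacle.} Each pass through the bootstrap costs at least one logarithmic factor: the net over $\mathbb S^{n-1}$ and the need to bound $\norm{A_t}_{\op}$ \emph{uniformly} in $t$, rather than at one fixed time, both appear to require it with current technology. Removing this last factor is essentially equivalent to the dimension-free thin-shell estimate $\Var_\mu(\abs{x})=O(1)$ for isotropic $\mu$, and that is exactly where I expect any honest attempt to stall: no mechanism is presently known that escapes the logarithmic loss, which is why the conjecture remains open.
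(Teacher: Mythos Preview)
The statement is a \emph{conjecture}, and the paper does not prove it; it is stated as the open KLS conjecture, with the remark that the best known bound is Klartag's logarithmic estimate $\Psi_n \lesssim \sqrt{\log n}$. There is therefore no ``paper's own proof'' to compare against.

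Your proposal is accurate in spirit as a summary of the stochastic-localization route and its current limitations, and you correctly flag that the conjecture remains open. A few minor corrections: the bound you attribute to Klartag should be $O(\sqrt{\log n})$ for $\Psi_n$, not for $\psi_n$ as you defined it (your $\psi_n$ is $C_P$, not $C_P^2$); and your variance decomposition inequality is slightly off --- the standard bound is of the form $C_P(\mu)^2 \lesssim \tfrac{1}{T} + \int_0^T \E\norm{A_s}_{\op}\,ds$ rather than with a $\sup$. But the substance is right: the logarithmic loss from the net/union bound over directions is exactly the obstruction, and no proof is expected here.
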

We will write $\Gamma_n$ for the best (possibly) dimensional constant in \eqref{eq_104}. The KLS conjecture has attracted a lot of attention since its original formulation in \cite{kannan1995isoperimetric}, culminating in a logarithmic estimate by Klartag \cite{klartag2023logarithmic}. By analogy, it is natural to conjecture that the log-Sobolev constant of log-concave probabilities should be controlled by the $L^{\psi_2}$ norm of linear functionals. 
\begin{conj}\label{conj:KLSob}
There exists a universal constant $C>0$ such that for every 
$n \in \mathbb{N}$ and every centered log-concave probability measure 
$\mu$ on $\mathbb{R}^n$,
\[
\rho_{LS}(\mu) 
\leq C \sup_{\theta \in \mathbb{S}^{n-1}} 
\| \langle \cdot , \theta \rangle \|_{L^{\psi_2}(\mu)}.
\]
\end{conj}

It is convenient to introduce the following terminology.

\begin{defi}
Let $\mu$ be a probability measure on $\mathbb{R}^n$, and let $b_\mu$ 
denote its barycenter.  
We say that $\mu$ is \emph{$\alpha$-subgaussian} if
\[
\sup_{\theta \in \mathbb{S}^{n-1}}
\| \langle \cdot - b_\mu , \theta \rangle \|_{L^{\psi_2}(\mu)} 
\leq \alpha.
\]
\end{defi}

For $n \ge 1$, define
\begin{equation}\label{eq:def_constant_klogsob}
G_n := \sup_{\mu} \rho_{LS}(\mu),
\end{equation}
where the supremum is taken over all log-concave $1$-subgaussian 
probability measures on $\mathbb{R}^n$.  
By scaling invariance of the log-Sobolev constant, 
Conjecture \ref{conj:KLSob} is equivalent to the assertion 
that $(G_n)_{n\geq1}$ is uniformly bounded.

\vspace{0.5em}

A theorem of Bobkov \cite{bobkov1999isoperimetric} states that 
if $\mu$ is a centered log-concave measure on $\mathbb{R}^n$, then
\[
\rho_{LS}(\mu) \lesssim\bigl\lVert\,|X|\,\bigr\rVert_{\psi_2},
\]
where $X$ is a random vector with law $\mu$.
As shown in Proposition \ref{prop:norm_subgauss} below, one always has
\[
\bigl\lVert\,|X|\,\bigr\rVert_{\psi_2} 
\;\lesssim\;
\sqrt{n}\, 
\sup_{\theta \in \mathbb{S}^{n-1}} \|\langle X,\theta\rangle\|_{\psi_2}.
\]
This estimate is sharp in general, since by Jensen’s inequality $
\bigl\lVert\,|X|\,\bigr\rVert_{\psi_2}^2 \; \ge \; \E|X|^2 .$
Consequently, Bobkov’s result implies $G_n \lesssim \sqrt{n}$.
Our first result is the following improvement:
\begin{thm}\label{thm:MAINLOGSOB} There exists a constant $C>0$ such that for all $n\geq1$
    $$G_n \leq C\ n^{1/4}.$$
\end{thm}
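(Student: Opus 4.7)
The plan is to adapt Eldan's stochastic localization to the log-Sobolev setting, paralleling the Lee--Vempala proof of the $\Psi_n \lesssim n^{1/4}$ bound for the KLS constant.

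First, I would apply Eldan's stochastic localization to $\mu$, producing log-concave probabilities $(\mu_t)_{t\geq 0}$ with $\mu_0 = \mu$, the martingale identity $\mu = \E[\mu_t]$, and such that the potential of $\mu_t$ has Hessian $\succeq t\cdot\Id$. Denote $A_t := \Cov(\mu_t)$ and $b_t := \int x\, d\mu_t$. By the Bakry--Émery criterion at time $t$, $\rho_{LS}(\mu_t)^2 \leq 1/t$.

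Second, I would set up an entropy decomposition along the localization. For a smooth nonnegative test $g = f^2$, the process $Y_t := \int g\, d\mu_t$ is a continuous martingale with quadratic variation $d\langle Y\rangle_t = |v_t|^2\, dt$, where $v_t := \int(x-b_t) g\, d\mu_t$. Applying It\^o to $y \mapsto y\log y$ gives
\[
\Ent_\mu(g) = \E\bigl[\Ent_{\mu_T}(g)\bigr] + \tfrac{1}{2}\,\E\!\int_0^T \frac{|v_t|^2}{Y_t}\, dt.
\]
The first term is bounded by $(2/T)\int|\nabla f|^2\, d\mu$ via Bakry--Émery at time $T$ together with the martingale identity $\E\!\int|\nabla f|^2\, d\mu_T = \int|\nabla f|^2\, d\mu$.

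Third, the heart of the argument is to bound the residual in the form
\[
\frac{|v_t|^2}{Y_t} \;\leq\; C\, M_t \int|\nabla f|^2\, d\mu_t
\]
for a scalar functional $M_t$ of $A_t$---morally $\Tr(A_t)$. Cauchy--Schwarz followed by Brascamp--Lieb on $\mu_t$ gives $|v_t|^2/Y_t \leq \Tr(A_t)\,\Var_{\mu_t}(f^2)/Y_t$, and a $T_2$-transportation inequality (or an analogous tool) applied to the tilted probability $g\mu_t/Y_t$ should then reduce $\Var_{\mu_t}(f^2)/Y_t$ to a multiple of $\int|\nabla f|^2\, d\mu_t$. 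Integrating in $t$ and invoking the martingale property of $\mu_t$ converts the residual into $C\,\bigl(\E\!\int_0^T M_t\, dt\bigr)\int|\nabla f|^2\, d\mu$.

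Fourth, to control $A_t$ I would use the 1-subgaussian hypothesis: initially $\|A_0\|_\op \leq C$ and, crucially, $\Tr(A_0) \leq Cn$. The matrix SDE $dA_t = -A_t^2\, dt + (\text{martingale})$ makes $\Tr(A_t)$ a supermartingale, so $\E[\Tr(A_t)] \leq Cn$ uniformly in $t$, whence $\E\!\int_0^T M_t\, dt \lesssim nT$.

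Combining these contributions yields $\rho_{LS}(\mu)^2 \lesssim 1/T + nT$, minimized at $T = 1/\sqrt{n}$ to give $\rho_{LS}(\mu)^2 \lesssim \sqrt{n}$, whence the theorem.

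The most delicate step is the third: extracting a Fisher-information bound from $|v_t|^2/Y_t$ with the correct matrix functional. The variance analogue (used for KLS) follows almost for free from Eldan's covariance identity, but here the nonlinear $1/Y_t$ factor produced by It\^o and the appearance of the tilted (and generally non-log-concave) measure $g\mu_t/Y_t$ make the bound subtle; a na\"ive application of Cauchy--Schwarz produces $\int f^2|\nabla f|^2\, d\mu_t/Y_t$, which cannot be controlled by $\int|\nabla f|^2\, d\mu_t$ without further work. A secondary technical point is promoting the in-expectation bound on $\Tr(A_t)$ to a pathwise statement, likely via a stopping-time argument discarding a low-probability set of bad localization trajectories.
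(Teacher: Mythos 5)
Your proposal takes an entirely different route from the paper, and the route contains a genuine gap that you yourself flag at the end. Step~3 is the problem, and it is not a cosmetic one. The natural consequence of It\^o plus the Gibbs variational principle (as worked out in Lemma~\ref{lem:dM_t_control} of the paper) is
\[
\frac{d[M]_t}{2M_t} \ \le \ \Tilde{\sigma_t}^2\,\Ent_{\mu_t}(g^2),
\]
i.e.\ an \emph{entropy} bound with coefficient the \emph{subgaussian constant} $\Tilde{\sigma_t}$, not a Fisher-information bound with coefficient $\Tr(A_t)$. Trying to force a Fisher-information bound as you describe --- Cauchy--Schwarz giving $|v_t|^2 \le \Tr(A_t)\Var_{\mu_t}(f^2)$, then Brascamp--Lieb on $\mu_t$ --- produces
\[
\frac{|v_t|^2}{Y_t}\ \lesssim\ \frac{\Tr(A_t)}{t}\cdot\frac{\int f^2|\nabla f|^2\,d\mu_t}{\int f^2\,d\mu_t},
\]
which has two defects: the Fisher information is computed against the tilted measure $g\mu_t/Y_t$ (not $\mu_t$, and not closing under $\E[\,\cdot\,]$ to $\int|\nabla f|^2\,d\mu$), and the Brascamp--Lieb constant contributes a $1/t$ factor that makes $\int_0^T(\cdot)\,dt$ divergent at $0$. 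Your fallback, a $T_2$ inequality for $g\mu_t/Y_t$, is unsupported: that tilted measure is not log-concave in general, and no such inequality is available. The entropy bound, by contrast, is recursive (entropy on both sides), and because of the $1/Y_t$ nonlinearity one cannot trade the recursion for a Fisher-information bound merely from an \emph{average} control of $\Tilde{\sigma_t}$ or $\Tr(A_t)$. The paper makes this point explicitly in Section~\ref{sec:loc_sto}: it is not known whether an average or high-probability bound on $\Tilde{\sigma_t}$ suffices to close the argument; only an \emph{almost sure} bound does (Lemma~\ref{lem:intro_strong_tilt_implies_logsob}), which a $1$-subgaussian hypothesis alone does not provide. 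Promoting $\E[\Tr(A_t)]\le Cn$ to a pathwise statement via stopping times --- your proposed repair --- is exactly the unresolved difficulty.

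The paper's actual proof sidesteps stochastic localization for the log-Sobolev functional entirely. It first invokes Milman's theorem (Theorem~\ref{thm:milman_logsob}): for log-concave measures, a Gaussian bound $\alpha_\mu(r)\le e^{-r^2/K^2}$ on the concentration function already implies $\rho_{LS}(\mu)\lesssim K$. This reduces the problem to concentration. For $r\lesssim\sqrt n$ it then cites the estimate of Theorem~\ref{thm:conc_function_sl} (from \cite{bizeul2022measures}, itself proved via stochastic localization, but for concentration rather than for entropy), together with $\Psi_n^2\log n = O(\sqrt n)$, to get $\alpha_\mu(r)\le e^{-c r^2/\sqrt n}$ in that range. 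For $r\gtrsim\sqrt n$ it uses the $1$-subgaussian hypothesis directly through the net bound of Proposition~\ref{prop:norm_subgauss}. Combining the two ranges gives $\alpha_\mu(r)\le e^{-c r^2/\sqrt n}$ for all $r$, hence $\rho_{LS}(\mu)\lesssim n^{1/4}$. So the parallel with the Lee--Vempala $n^{1/4}$ argument that you are aiming for does hold at the level of the \emph{input} concentration estimate, but the conversion to log-Sobolev is done through Milman's equivalence, not through an entropy decomposition along the localization. If you want to pursue your route, the honest statement is that it reduces Theorem~\ref{thm:MAINLOGSOB} to the open question flagged in Section~\ref{sec:loc_sto}, not that it proves it.
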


In this paper, we present two distinct routes to Theorem~\ref{thm:MAINLOGSOB}. 
More precisely, we establish two non-comparable estimates on the log-Sobolev constant of log-concave random vectors, 
Theorem~\ref{thm189} and Theorem~\ref{thm:LOCALISATIONLOGSOB} below. 
Theorem~\ref{thm:MAINLOGSOB} follows directly from Theorem~\ref{thm189}, 
while Theorem~\ref{thm:LOCALISATIONLOGSOB} provides an alternative proof up to a logarithmic factor.

\begin{thm}\label{thm189}
Let $\mu$ be a $1$-subgaussian log-concave probability measure on $\R^n$, and let $\Cov(\mu) = \E_\mu(xx^T) - \E_\mu(x)\E_\mu(x)^T$ denote its covariance matrix. Then
\[
\rho_{LS}(\mu) \lesssim 1 + \bigl(n\,\|\Cov(\mu)\|_{op}\bigr)^{1/4}.
\]
\end{thm}

For any $1$-subgaussian probability measure $\mu$ with barycenter $b_\mu = \E_\mu(x)$,
\[
\|\Cov(\mu)\|_{op}
= \sup_{\theta \in \mathbb{S}^{n-1}} \Var_\mu(\langle \cdot , \theta \rangle)
\le \sup_{\theta \in \mathbb{S}^{n-1}} \|\langle \cdot - b_\mu , \theta \rangle\|_{L^{\psi_2}(\mu)}^2
\le 1.
\]
Hence Theorem \ref{thm189} implies Theorem \ref{thm:MAINLOGSOB}. However, in many situations the operator norm of the covariance is much smaller, which leads to a sharper estimate. For example, if $\nu_1$ is the uniform measure on
\[
\sqrt{n}B_1^n = \Bigl\{x \in \R^n : \sum_{i=1}^n |x_i| \le \sqrt{n}\Bigr\},
\]
then an elementary computation shows that
\[
\sup_{\theta \in \mathbb{S}^{n-1}} \|\langle \cdot , \theta \rangle\|_{L^{\psi_2}(\nu_1)} \simeq 1,
\qquad
\|\Cov(\nu_1)\|_{op} \simeq \frac{1}{n}.
\]
Thus Theorem \ref{thm189} yields the sharp bound
\[
\rho_{LS}(\nu_1)
\simeq \sup_{\theta \in \mathbb{S}^{n-1}} \|\langle \cdot , \theta \rangle\|_{L^{\psi_2}(\nu_1)}
\simeq 1.
\]
In other words, our estimate for the log-Sobolev constant \emph{improves} precisely when there is a gap between the $L^2$ and $L^{\psi_2}$ norms of linear functionals. The proof of Theorem \ref{thm189} uses a general result about the concentration function of log-concave probabilities established in \cite{bizeul2022measures} via stochastic localization combined with an elementary deviation inequality for the euclidean norm of $1$-subgaussian vectors. The proof is given at the end of Section \ref{sec:background}.
\medskip

\comment{The proof of Theorem \ref{thm:MAINLOGSOB} uses a general result about the concentration function of log-concave probabilities established in \cite{bizeul2022measures} via stochastic localization. This latter process proved very successful in proving Poincaré inequalities, culminating in a logarithmic dimensional estimate for the Poincaré constant of an isotropic log-concave probability measure \cite{klartag2023logarithmic}. The study of log-Sobolev inequalities via stochastic localization was initiated in \cite{lee2017eldan}. We discuss the new difficulties that arise in this setting in Section \ref{sec:loc_sto}.}

We now turn to the second result from which Theorem \ref{thm:MAINLOGSOB} follows, up to a logarithmic factor. A classical result of Bobkov \cite{bobkov2007isoperimetric} states that for any log-concave random vector $X$,
\begin{equation}\label{eq:bobkov_spectral}
    C_P^2(X) \lesssim \Var(|X|^2)^{1/2}.
\end{equation}
This bound is tight (up to a universal constant) for some measures, such as the uniform distribution on some Euclidean ball, but is generally suboptimal; for instance, it yields $C_P(G) \lesssim n^{1/4}$ for a standard Gaussian vector $G$.
We prove the following analogue for the log-Sobolev constant.
\begin{thm}\label{thm:LOCALISATIONLOGSOB}
Let $X$ be a log-concave random vector. Then
\[
    \rho_{LS}^2(X) \lesssim \bigl\lVert\,|X|^2- \E\abs{X}^2\,\bigr\rVert_{\psi_1}.
\]
\end{thm}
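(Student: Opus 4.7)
My plan is to use Eldan's stochastic localization $(\mu_t)_{t\ge 0}$, which realizes $\mu$ as a martingale of log-concave measures of the form $d\mu_t(x) \propto e^{\langle x, c_t\rangle - t\abs{x}^2/2}d\mu(x)$, whose potentials $V_t$ therefore satisfy $\nabla^2 V_t \succeq t\cdot\Id$. By the Bakry--Émery criterion, $\rho_{LS}^2(\mu_t) \leq 1/t$. The strategy is to combine this with a ``localization-to-LSI'' bound of the type
$$\rho_{LS}^2(\mu) \;\lesssim\; \rho_{LS}^2(\mu_T) + \mathrm{correction}_T,$$
holding in expectation, where $\mathrm{correction}_T$ encodes the cost of running the localization process up to time $T$, and then to choose $T \sim 1/\norm{\abs{X}^2-\E\abs{X}^2}_{\psi_1}$ to balance the two terms.

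To estimate the correction, I would exploit the fact that in Eldan's scheme the barycenter satisfies $db_t = C_t\,dB_t$ with $C_t = \Cov(\mu_t)$, so transport/entropy estimates along the flow naturally involve integrals of $\norm{C_s}_{\op}$ or $\Tr(C_s^2)$. The key input would be Bobkov's spectral inequality \eqref{eq:bobkov_spectral} applied pointwise along the flow, giving $\norm{C_s}_{\op}^2 \leq C_P(\mu_s)^2 \lesssim \Var_{\mu_s}(\abs{X}^2)$. Using the explicit Radon--Nikodym derivative of $\mu_s$ with respect to $\mu$ (a Gaussian tilt), the expectation $\E[\Var_{\mu_s}(\abs{X}^2)]$ reduces to a computation involving exponential moments of $\abs{X}^2$ under the original measure $\mu$; these are precisely what $\norm{\abs{X}^2-\E\abs{X}^2}_{\psi_1}$ controls, at least in the regime $s \lesssim 1/\norm{\abs{X}^2 - \E\abs{X}^2}_{\psi_1}$ where the tilt $e^{-s\abs{x}^2/2}$ remains well-behaved and the normalization of $\mu_s$ stays comparable to $1$.

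The main obstacle, as foreshadowed by Section \ref{sec:loc_sto}, is that LSI constants do not mix well under linear combinations of measures, so naively averaging $\rho_{LS}^2(\mu_T)$ over the randomness of the process is lossy. I would either (i) track the entropy directly along the stochastic process as in \cite{lee2017eldan}, or (ii) invoke E.~Milman's equivalence for log-concave measures to reduce LSI to subgaussian concentration of $1$-Lipschitz functions, and then establish that concentration via the decomposition $f - \E_\mu f = (f - \E_{\mu_T} f) + (\E_{\mu_T} f - \E_\mu f)$: the first term has subgaussian proxy $1/T$ by $T$-strong log-concavity of $\mu_T$, while the second is essentially the martingale $f(b_T) - f(b_0)$ whose quadratic variation is controlled by the covariance estimates above, yielding exponential moments through an It\^o--BDG argument. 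I expect the most delicate point to be ensuring that the correction truly scales like $\norm{\abs{X}^2 - \E\abs{X}^2}_{\psi_1}$ without picking up spurious logarithmic losses, which requires tight simultaneous control of the tilted normalizations, the spectral-gap estimate for $\mu_s$, and the stopping time calibration.
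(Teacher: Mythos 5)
Your proposal takes a genuinely different route from the paper, and it contains a real gap that the paper's choice of technique is specifically designed to avoid.

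The paper does not use stochastic localization for this theorem. It uses the \emph{deterministic} (geometric) needle decomposition (Lemma~\ref{lem:logsob_geometric_localization_lemma}): for a fixed measurable set $S$ it disintegrates $\mu=\E_\omega\,\mu_\infty(\omega)$ into one-dimensional log-concave needles with $\mu_\infty(S)=\mu(S)$ a.s. It then works with Ledoux's isoperimetric-type functional $k_\mu\simeq\rho_{LS}(\mu)$. The crucial point is that $\mu^+(S)$ and $\mathcal{E}_\mu(S)$ are \emph{linear in $\mu$}, so the disintegration passes through the inequality exactly: $\mu^+(S)=\E\,\mu^+_\infty(S)\ge\E[1/k_{\mu_\infty}]\,\mathcal{E}_\mu(S)$. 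A Markov argument on $\E\,e^{(|X_\infty|^2-a)/K_a}$ shows that with probability $\ge1/2$ the (one-dimensional) needle satisfies a $\psi_1$-bound comparable to $K_a$, and the one-dimensional case is resolved via Bobkov's 1D criterion $\rho_{LS}(Y)\lesssim\norm{Y^2}_{\psi_1}$ together with a centering lemma. No almost-sure control along any flow is required.

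By contrast, your stochastic localization plan needs control on the (random) quantities $\norm{A_s}_{\op}$, $\Var_{\mu_s}(|X|^2)$, or $\tilde\sigma(\mu_s)$ along the flow, and it is exactly this control that is problematic. Entropy is nonlinear, and the paper's Lemma~\ref{lem:dM_t_control} only closes with an \emph{almost-sure} bound $\tilde\sigma_t\le M$; the paper explicitly flags in Section~\ref{sec:loc_sto} that it is unclear whether a high-probability or in-expectation bound on $\tilde\sigma_t$ can be upgraded to an LSI for $\mu$. Your option~(ii) does not escape this: to deduce subgaussian tails for the martingale $N_t=\E_{\mu_t}f$ from the quadratic variation $\int_0^T\norm{A_s}_{\op}\Var_{\mu_s}(f)\,ds$, you need this quadratic variation to be a.s. bounded by a deterministic quantity of the right order, but the Gaussian tilt $e^{-s|x|^2/2+\langle c_s,x\rangle}$ has an unbounded random drift $c_s$, and nothing forces $\Var_{\mu_s}(|X|^2)$ to stay controlled pathwise. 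Controlling it only in expectation via the Radon--Nikodym derivative, as you suggest, gives (by BDG-type moment estimates) at best subexponential rather than subgaussian concentration, which is too weak for Milman's equivalence. A secondary inaccuracy: Bobkov's spectral inequality gives $\norm{A_s}_{\op}\le C_P(\mu_s)^2\lesssim\Var_{\mu_s}(|X|^2)^{1/2}$ (standard deviation, not variance -- the exponent in~\eqref{eq:bobkov_spectral} as printed is a typo, and you wrote $\norm{C_s}_{\op}^2$ where you meant $\norm{C_s}_{\op}$); this doesn't repair the main issue but worth noting. In short, you correctly identified the obstacle, but your proposed workarounds don't clear it, whereas the paper sidesteps it entirely by switching from stochastic localization to the deterministic needle decomposition, where the relevant functional is linear in the measure.
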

As in Bobkov's inequality, the estimate is sharp up to constants when $X$ is uniform on the Euclidean ball. Indeed, if $X$ is uniformly distributed on the unit Euclidean ball, its norm is distributed as $F_n(t) = \PP(\abs{X}^2 \leq t) = t^{n/2}$. We deduce that $\E\abs{X}^2 = \int_0^1 \PP(\abs{X}^2 \geq t)\,dt = \frac{n}{n+2}$. Thus for any $t \geq \frac{1}{n+2}$,
\[
\PP\left(\big|\abs{X}^2 - \E\abs{X}^2\big| \geq t\right)
= \PP\left(\abs{X}^2 \leq \E\abs{X}^2 - t\right)
\leq \PP\left(\abs{X}^2 \leq 1 - t\right)
= (1 - t)^{n/2}
\leq e^{-nt/2}.
\]
Using the above bound, it is elementary to check that $\bigl\lVert\,|X|^2- \E\abs{X}^2\,\bigr\rVert_{\psi_1} \leq \frac{8}{n}$. Thus Theorem \ref{thm:LOCALISATIONLOGSOB} gives the correct estimate $\rho^2_{LS}(X) \simeq \frac{1}{n}$. On the other hand, for the standard Gaussian, Theorem \ref{thm:LOCALISATIONLOGSOB} only yields the suboptimal $\rho_{LS}(G) \lesssim n^{1/4}$.

In the general setting, using the identity
\[
\abs{X}^2 - \E\abs{X}^2
= \left(\abs{X} - \E\abs{X}\right)^2
+ 2\,\E\abs{X}\,\left(\abs{X} - \E\abs{X}\right)
- \Var(\abs{X})
\]
and the triangle inequality, we arrive at
\begin{equation}\label{eq243}
    \rho_{LS}^2(X) \;\lesssim\; \bigl\lVert\,|X|^2- \E\abs{X}^2\,\bigr\rVert_{\psi_1}
    \;\lesssim\;
    \bigl\|\,|X| - \E|X|\,\bigr\|_{\psi_2}^2
    \;+\;
    \E|X| \,\cdot\,
    \bigl\|\,|X| - \E|X|\,\bigr\|_{\psi_1},
\end{equation}
where we used that 
$\bigl\|\,Y^2\,\bigr\|_{\psi_1} = \bigl\|\,Y\,\bigr\|_{\psi_2}^2$ 
and the inequality 
$\Var(Y) \leq \bigl\|\,Y - \E Y\,\bigr\|_{\psi_2}^2$.
Let us discuss the two terms appearing on the right-hand side of~\eqref{eq243}. 
Assume that $X$ is a centered, $1$-subgaussian, log-concave random vector in $\R^n$. 
Since the $L^2$ norm is dominated by the $L^{\psi_2}$ norm (see Lemma~\ref{lem:psi_2_variance} below), 
the covariance matrix of $X$ is dominated by the identity, and therefore
\[
    \E|X|\;\leq\;\sqrt{n}.
\]
Moreover, the Euclidean norm is $1$-Lipschitz, so that
\[
    \bigl\|\,|X| - \E|X|\,\bigr\|_{\psi_1}
    \;\lesssim\;
    C_P(X)
    \;\lesssim\;
    \sqrt{\log n},
\]
where the last bound follows from the best known estimate on the KLS constant 
due to Klartag~\cite{klartag2023logarithmic}. 
Together, these observations allow us to control the second term in~\eqref{eq243} by $\sqrt{n}$, 
up to a logarithmic factor.

Finally, after a suitable preprocessing step, the first term can also be bounded by $\sqrt{n}$, 
using a deviation inequality of Gu\'edon and Milman~\cite{guedon2011interpolating}. 
The full argument is carried out in at the end of Section~\ref{sec:proof_localization}, where we establish:
\begin{cor}\label{cor_secondproof}
    Let $X$ be a $1$-subgaussian log-concave vector in $\R^n$, then $\rho_{LS}^2(X)\;\lesssim\;\sqrt{n\log n}.$
\end{cor}
In other words, we recover our main result, Theorem~\ref{thm:MAINLOGSOB}, 
up to a factor of $\log(n)^{1/4}$, which could be removed if the KLS conjecture were true.

\smallskip
Theorem~\ref{thm:LOCALISATIONLOGSOB} is proved in Section~\ref{sec:proof_localization} using a localization argument that reduces the problem to dimension~$1$. In the same section, we also provide a detailed proof of Corollary~\ref{cor_secondproof}.
\medskip

Next, we turn to some special cases. The first one is the class of rotationally invariant log-concave probabilities. For this class we show that Conjecture~\ref{conj:KLSob} holds. Using a result of Bobkov~\cite{bobkov2010gaussian}, we obtain:
\begin{thm}\label{thm:logsob_rot_inv}
Let $\mu$ be a rotationally invariant log-concave probability measure. Then there exists a universal constant $C>0$ such that
$$\rho_{LS}(\mu)\leq C\,\norm{\langle\cdot,e_1\rangle}_{L^{\psi_2}(\mu)},$$
where $e_1$ denotes the first element of the canonical basis.
\end{thm}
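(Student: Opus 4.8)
The plan is to reduce the $n$-dimensional log-Sobolev inequality to a one-dimensional statement about the radial part of $\mu$, exploiting rotational symmetry. Write $d\mu = e^{-V(\abs{x})}dx$ for some convex-nondecreasing profile (after the standard observation that rotational invariance plus log-concavity forces $V$ to be a nondecreasing convex function of the radius — or at least to behave like one). The radial pushforward $\nu$ of $\mu$ under $x\mapsto\abs{x}$ has density proportional to $r^{n-1}e^{-V(r)}$ on $(0,\infty)$, and this is again log-concave precisely in the regime we care about. The first key step is a tensorization-type decomposition: one controls $\rho_{LS}(\mu)$ in terms of (i) the log-Sobolev constant of the radial marginal $\nu$ and (ii) a uniform bound on the log-Sobolev constant of the uniform measure on the sphere $\sqrt{t}\,\mathbb{S}^{n-1}$, which is known to be of order $t/\sqrt{n}$ (the sphere of radius $R$ has log-Sobolev constant $\asymp R/\sqrt{n}$). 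Combining, $\rho_{LS}(\mu)^2 \lesssim \rho_{LS}(\nu)^2 + \frac{1}{n}\,\E_\mu\abs{X}^2$.

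Next I would invoke Bobkov's result from \cite{bobkov2010gaussian} on the one-dimensional radial measure: for the log-concave density $\propto r^{n-1}e^{-V(r)}$, its log-Sobolev (equivalently, since it is one-dimensional log-concave, its Poincaré-comparable) constant is controlled by a variance-type quantity of $r$ under $\nu$, i.e. $\rho_{LS}(\nu)^2 \lesssim \Var_\nu(r)$ — or more precisely by the relevant $\Psi_2$-type radial quantity that Bobkov's theorem produces. The point of \cite{bobkov2010gaussian} is exactly that for rotationally invariant measures the radial behaviour is tame enough that this works with a universal constant. So the task becomes bounding both $\Var_\nu(r)$ and $\frac{1}{n}\E\abs{X}^2$ by $\alpha^2 := \norm{\langle\,\cdot\,,e_1\rangle}_{\Psi_2(\mu)}^2$.

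For the second step, I would use the thin-shell phenomenon for rotationally invariant log-concave measures: writing $r = \abs{X}$, one has $\E r^2 = \sum_i \E X_i^2 = n\,\E X_1^2 \lesssim n\alpha^2$ by definition of the $\Psi_2$-norm (since $\E_\mu e^{X_1^2/\alpha^2}\le 2$ forces $\E X_1^2 \lesssim \alpha^2$). This immediately gives $\frac{1}{n}\E\abs{X}^2 \lesssim \alpha^2$. For the variance of the radius, the heuristic $\Var(r) \approx \frac{\Var(\abs{X}^2)}{4\E\abs{X}^2}$ together with the concentration of $\abs{X}^2$ — which for a rotationally invariant log-concave vector can be read off from the one-dimensional fluctuations, $\Var(\abs{X}^2)\lesssim n\cdot(\E X_1^2)^2$-type bounds, or directly from $\alpha$-subgaussianity of the coordinates — yields $\Var_\nu(r) \lesssim \alpha^2$. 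The main obstacle, and the place where rotational invariance is genuinely used rather than just convexity, is establishing this radial concentration $\Var(r)\lesssim\alpha^2$ with a universal constant: for general log-concave measures this is the thin-shell conjecture, but for the rotationally invariant subclass it reduces, via \cite{bobkov2010gaussian} and an integration-by-parts / change-of-variables computation on $r^{n-1}e^{-V(r)}$, to a one-dimensional inequality that can be verified by hand. Once both pieces are in place, combining with the tensorization bound gives $\rho_{LS}(\mu)^2 \lesssim \alpha^2$, which is the claim.
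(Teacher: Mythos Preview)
Your approach via radial/spherical decomposition is quite different from the paper's, which is much shorter: the paper quotes Bobkov's estimate $\alpha_\mu(r)\le e^{-cr^2}$ for $r\le\sqrt n$ (valid for isotropic rotationally invariant log-concave $\mu$), patches the range $r\gtrsim\sqrt{n}\,\sigma_{SG}(\mu)$ using the net argument of Proposition~\ref{prop:norm_subgauss}, and then applies Milman's Theorem~\ref{thm:milman_logsob} directly. No polar decomposition, no one-dimensional analysis of the radial marginal.

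Your route has two genuine gaps. First, the tensorization bound $\rho_{LS}(\mu)^2 \lesssim \rho_{LS}(\nu)^2 + \frac{1}{n}\E_\mu\abs{X}^2$ does not follow from standard tensorization: conditioning on $\abs{X}=r$ lands you on the sphere $r\mathbb{S}^{n-1}$ whose log-Sobolev constant squared is $\asymp r^2/n$, and the naive chain rule produces $\sup_r r^2/n$ over the support of $\nu$, not the average. For unbounded support this is infinite; getting the averaged version requires a nontrivial warped-product argument that you have not supplied. Second, the claim $\rho_{LS}(\nu)^2\lesssim\Var_\nu(r)$ is simply false for one-dimensional log-concave measures (the exponential law has finite variance and infinite log-Sobolev constant). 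The correct statement is $\rho_{LS}(\nu)\lesssim\norm{\,\abs{X}-\E\abs{X}\,}_{\Psi_2}$, and controlling \emph{this} quantity by $\alpha=\sigma_{SG}(\mu)$ is precisely the subgaussian thin-shell statement you dismiss as something that ``can be verified by hand.'' In fact that radial concentration is essentially the content of Bobkov's result \cite{bobkov2010gaussian} (stated there, and in the paper, as a bound on the full concentration function $\alpha_\mu$), so you are not bypassing the hard input---you are only relocating where it enters, while leaving both the tensorization and the radial $\Psi_2$ bound unjustified.
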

We now introduce two subclasses of subgaussian probability measures. Let $\mu$ be a probability measure on $\R^n$,  
for $t\ge 0$ and $h\in\R^n$, define
\[
L_t(h)
    = \log \int_{\R^n} \exp\!\left(h\cdot x - \tfrac{t}{2}|x|^2\right)\, d\mu(x),
\]
and the corresponding tilted measure
\[
\frac{d\mu_{t,h}}{d\mu}(x)
    = \exp\!\left(h\cdot x - \tfrac{t}{2}|x|^2 - L_t(h)\right).
\]
When $t=0$, the function $L_0$ is the log-Laplace transform of $\mu$. When it is convenient, we may use the notation $\tau_h$ for the operator linearly tilting measures by a vector $h$ : 
\begin{equation}\label{eq_notationtau}
    \tau_h \mu = \mu_{0,h}.
\end{equation}
A direct differentiation under the integral shows that the gradient and Hessian of $L_t$ are given by the barycenter and covariance matrix of the tilted measure $\mu_{t,h}$. For all $t\ge 0$ and $h\in\R^n$,
\[
\nabla_h L_t(h) = \mathrm{bar}(\mu_{t,h})
\qquad
\nabla_h^2 L_t(h) = \Cov(\mu_{t,h}).
\]
For symmetric matrices $A,B$, we write $A \preceq B$ if $B-A$ is positive semidefinite, and we denote by $I_n$ the $n\times n$ identity matrix.
\begin{defi}
We say that a probability measure $\mu$ on $\R^n$ is \emph{$\beta$ tilt-stable}, for some $\beta>0$, if for all $h\in\R^n$,
\begin{equation}\label{eq306}
    \nabla^2L_0(h) = \Cov(\mu_{0,h}) \preceq \beta^2I_n.
\end{equation}
We say that $\mu$ is \emph{$\beta$ strongly tilt-stable} if for all $h\in\R^n$ and all $t\geq0$,
\begin{equation}\label{eq310}
\nabla^2L_t(h) = \Cov(\mu_{t,h}) \preceq \beta^2I_n.
\end{equation}
\end{defi}
Using the same argument as in the proof of \eqref{eq101} one can verify that $1$-tilt stable probabilities are $2$-subgaussian.
\begin{lem}\label{lem315}
    Let $\mu$ be a $\beta$ tilt-stable probability measure, then $\mu$ is $2\beta$-subgaussian.    
\end{lem}
\begin{proof}
By scaling and translation invariance, we may assume that $\mu$ is $1$-tilt stable and centered. Integrating \eqref{eq306} we find that 
\begin{equation}\label{eq320}
    L_0(h)\leq \frac{\abs{h}^2}{2}
\end{equation}
Let $X$ be distributed according to $\mu$ and let $\theta$ be a unit vector. Denote $X_\theta = X\cdot\theta$, we want to check that $\E e^{X_\theta^2/4}\leq 2$. From \eqref{eq320} we know that $\E e^{sX_\theta} \leq e^{s^2/2}$, thus the situation is identical to the proof of \eqref{eq101} above:
$$\E e^{X_\theta^2/4} = \E e^{X_\theta Z/\sqrt{2}} \leq \E e^{Z^2/4} = \sqrt{2} < 2.$$
where $Z$ is and independent standard Gaussian.
\end{proof}
Finally, a related notion is the class of measures that are log-concave with respect to the standard Gaussian. We call them $1$-strongly log-concave. It is well known that such measures satisfy a Poincaré with constant $1$. In particular,
$$\norm{\Cov(\mu)}_{op} = \sup_{\theta \in \mathbb S^{n-1}}\Var_\mu(\langle .,\theta\rangle) \leq 1.$$
Observe that if $\mu$ is $1$-strongly log-concave, then so is $\mu_{t,h}$, for all $t\geq0$ and $h\in\R^n$. Thus the operator norm of the covariance matrix of $\mu_{t,h}$ is also bounded by $1$. In other words, $\mu$ is strongly tilt-stable. 
To summarize, we have a chain of inclusion
\begin{equation}\label{eq:hierarchy}
\bigl\{\, 1\text{-strongly log-concave} \,\bigr\}
\;\subset\;
\bigl\{\, 1\text{-strongly tilt-stable} \,\bigr\}
\;\subset\;
\bigl\{\, 1\text{-tilt-stable} \,\bigr\}
\;\subset\;
\bigl\{\, 2\text{-subgaussian} \,\bigr\}.
\end{equation}
The Bakry–Émery criterion~\cite{bakry2006diffusions} implies that if $\mu$ is $1$-strongly log-concave, then $\rho_{LS}(\mu) \leq 1$; see \cite{bobkov2000brunn} for a quick proof. In Section~\ref{sec:loc_sto}, we show that a similar bound holds under the weaker assumption of strong tilt-stability, a fact that is well known to experts.

\begin{lem}\label{lem:intro_strong_tilt_implies_logsob}
If $\mu$ is $\beta$-strongly tilt-stable and log-concave, then 
$\rho_{LS}(\mu) \leq 2\beta$.
\end{lem}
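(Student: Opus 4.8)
The plan is to run Eldan's stochastic localization process started at $\mu$ and track the log-Sobolev constant along the way, as in \cite{lee2017eldan}. Recall the construction: one defines a measure-valued process $(\mu_t)_{t\geq 0}$ with $\mu_0=\mu$, where $\frac{d\mu_t}{d\mu}(x) \propto \exp(\theta_t\cdot x - \tfrac{t}{2}\abs{x}^2)$ for a process $\theta_t = W_t + \int_0^t a_s\, ds$ driven by a Brownian motion $W$, with $a_s$ the barycenter of $\mu_s$; equivalently, along the flow each $\mu_t$ is an exponential tilt of the Gaussian-damped measure $\mu\, e^{-\frac{t}{2}\abs{x}^2}$, so that $\mu_t = \frac{1}{Z_{t/2,\theta_t}}\,\mu\, e^{-\frac{t}{2}\abs{x}^2 + \theta_t\cdot x}$. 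The strong tilt-stability hypothesis therefore says precisely that $\norm{\Cov(\mu_t)}_{\op}\leq\beta^2$ for every $t\geq 0$ and every realization of the process.

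The key mechanism is that $\mu_t$ is a martingale (in the sense that $t\mapsto\mu_t(A)$ is a martingale for each Borel $A$), while the Gaussian factor $e^{-\frac{t}{2}\abs{x}^2}$ is being injected at a fixed rate. A standard computation — the one underlying the Bakry--Émery argument applied along the localization — shows that if $\nu$ is any probability and $\nu'\propto \nu\, e^{-s\abs{x}^2/2}$, then by the Bakry--Émery criterion the Gaussian-damped measure contributes a log-Sobolev constant controlled by $1/s$, and more precisely one has the pointwise-in-time differential inequality for the log-Sobolev constant of $\mu_t$. Concretely, I would use the semigroup/interpolation identity: for a fixed nice test function $f$, writing $u_t = \E_{\mu_t}(f^2\log f^2) - \E_{\mu_t}(f^2)\log\E_{\mu_t}(f^2)$, one differentiates in $t$. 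The martingale part contributes a term whose drift, after applying Itô and the log-Sobolev-type algebra, is nonpositive thanks to the convexity of $x\log x$ together with the fact that the covariance of $\mu_t$ is at most $\beta^2 I_n$ in operator norm; the deterministic $-\frac{t}{2}\abs{x}^2$ part contributes the Gaussian smoothing. Integrating from $0$ to $\infty$ (by which time $\mu_t$ has collapsed to a point mass and $u_t\to 0$), one recovers $u_0 \leq 2\beta^2\int_{\R^n}\abs{\nabla f}^2\,d\mu$, i.e. $\rho_{LS}(\mu)\leq 2\beta$. Log-concavity of $\mu$ is used to guarantee that the process is well-defined and that the tilted measures remain genuine log-concave probabilities, and to justify the integrability and the limit $t\to\infty$.

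The main obstacle — and the reason this is stated as a nontrivial lemma rather than a quotation from \cite{lee2017eldan} — is that, unlike the Poincaré case where Eldan's original analysis needs only $\Cov(\mu_t)$ to be controlled, the log-Sobolev estimate requires controlling a weighted second moment of $f^2$ against $\mu_t$ in terms of the Dirichlet form, uniformly along the process. The correct bound comes from a \emph{reverse} role of the covariance: one needs that for $\mu_t$ the quantity $\E_{\mu_t}[(f^2 - \E_{\mu_t}f^2)\,(\theta\cdot(x-a_t))]$ is controlled, which is where the operator-norm bound on $\Cov(\mu_t)$ enters through Cauchy--Schwarz, but one must be careful that the entropy, not the variance, is what is being propagated. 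I expect the delicate step to be showing that the Itô drift of $u_t$ is bounded by $-c\,t\,(\text{something nonnegative}) + (\text{Dirichlet-form remainder})$ with the right constant $2$, which forces a careful choice of the martingale representation of $f^2$ under $\mu_t$ and an application of the one-dimensional Gaussian log-Sobolev inequality fiberwise. Once the differential inequality is set up with the correct constant, the integration is routine; I would present the drift computation in detail in Section \ref{sec:loc_sto} and relegate the standard stochastic-localization preliminaries to a citation.
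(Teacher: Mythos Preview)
Your framework is correct --- run stochastic localization and track the entropy of $g^2$ under $\mu_t$ --- but the mechanism you sketch for closing the loop has a real gap. The quantity to control is $\frac{d[M]_t}{2M_t}$ where $M_t=\int g^2\,d\mu_t$, and you propose bounding $\int g^2(x-a_t)\cdot\theta\,d\mu_t$ via Cauchy--Schwarz against the covariance. That gives at best $d[M]_t\leq \beta^2\Var_{\mu_t}(g^2)\,dt$, hence $\frac{d[M]_t}{2M_t}\leq \frac{\beta^2}{2}\frac{\Var_{\mu_t}(g^2)}{M_t}$, which involves the \emph{variance} of $g^2$, not its entropy; there is no general inequality $\Var_{\mu_t}(g^2)/M_t\lesssim \Ent_{\mu_t}(g^2)$, so the argument does not close. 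You flag this concern yourself but do not resolve it. The paper's Lemma~\ref{lem:dM_t_control} goes instead through the Gibbs variational principle: for any $\theta$ and $\lambda>0$,
\[
\int \frac{g^2}{M_t}\,\frac{(x-a_t)\cdot\theta}{\sqrt\lambda}\,d\mu_t \ \leq\ \Ent_{\mu_t}\!\Big(\frac{g^2}{M_t}\Big)+\log\E_{\mu_t}e^{(x-a_t)\cdot\theta/\sqrt\lambda},
\]
and after optimizing in $\lambda$ one obtains $\frac{d[M]_t}{2M_t}\leq\Tilde\sigma_t^{\,2}\,\Ent_{\mu_t}(g^2)$. Note that what enters is the \emph{sub-gaussian constant} $\Tilde\sigma_t$ of $\mu_t$, not merely $\norm{\Cov(\mu_t)}_{\op}$; strong tilt-stability is used to say that each $\mu_{t,h}$ is itself $\beta$-tilt-stable (a further tilt stays inside the family $\{\mu_{t,h'}\}$), whence $\Tilde\sigma_t\leq\beta$ by Lemma~\ref{lem:tilt_implies_subgauss}. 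A bare covariance bound on $\mu_t$ would not suffice.

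Second, the Dirichlet form does not appear by ``integrating the drift to $t=\infty$''. The paper uses the entropy chain rule $\Ent_\mu(g^2)=\E\Ent_{\mu_T}(g^2)+\Ent(M_T)$ at a \emph{finite} $T$ (Lemma~\ref{lem:entropy_decomposition}); the first term is handled by Bakry--\'Emery, since $\mu_T$ is $T$-uniformly log-concave, giving $\frac{2}{T}\E_\mu\abs{\nabla g}^2$, while the second is $\E\int_0^T\frac{d[M]_t}{2M_t}\leq\beta^2\int_0^T\E\Ent_{\mu_t}(g^2)\,dt\leq\beta^2T\,\Ent_\mu(g^2)$. Choosing $T=\frac{1}{2\beta^2}$ absorbs the last term and yields $\rho_{LS}(\mu)\leq 2\beta$. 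No ``one-dimensional Gaussian LSI fiberwise'' or ``nonpositive drift by convexity of $x\log x$'' is involved.
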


The rightmost and leftmost inclusions of \eqref{eq:hierarchy} are clearly not reversible. Indeed, the uniform measure on the cube is easily seen to be strongly tilt-stable (since every tilted measure $\mu_{t,h}$ remains a product measure), but it is of course not strongly log-concave. Moreover, the property of being tilt-stable is, in a sense, discontinuous. For $\alpha \in \R$, consider the measure $\nu_\alpha = \tau_\alpha \nu_0$ on $\R$, whose density is proportional to $d\nu_\alpha(x) = 1_{[-\sqrt{3},\,\sqrt{3}]}(x)\, e^{\alpha x}\,dx$. Recall that the tilt operator $\tau_\alpha$ was introduced in \eqref{eq_notationtau}. As $\abs{\alpha}\to\infty$, the measure $\nu_\alpha$ converges to a Dirac mass. One easily checks that the subgaussian constant satisfies $\norm{X_\alpha}_{\psi_2} \simeq (1+\abs{\alpha})^{-1}$ for $X_\alpha\sim\nu_\alpha$. However for every $\alpha\in\R$ we have $\Cov(\tau_{-\alpha}\nu_\alpha) = \Cov(\nu_0) = 1$. Hence each $\nu_\alpha$ is merely $1$–tilt-stable.

On the other hand, we prove in Section \ref{sec:subclass} that 1-tilt-stable log-concave probabilities are in fact strongly-tilt stable, with a dimension dependent constant. Let us introduce the quantities
\[
\Tilde{G_n} = \sup_{\mu}\rho_{LS}(\mu),
\qquad
    \Tilde{K}_n
    \;=\;
    \sup_{\mu}
    \bigl\|\,|X| - \E|X|\,\bigr\|_{\psi_2},
\]
where both suprema run over all log-concave probabilities $\mu$ on $\R^n$ that are $1$ tilt-stable. We prove the following
\begin{lem}\label{lem:intro_tilt_implies_strong_tilt}
    Let $\mu$ be a log-concave measure on $\R^n$ that is $1$ tilt-stable, then $\mu$ is $Cn^{1/6}\Tilde{K_n}^{1/3}$ strongly tilt-stable, for some universal constant $C>0$.  
\end{lem}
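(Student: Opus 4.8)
The goal is to show that for a $1$ tilt-stable log-concave $\mu$, adding a Gaussian tilt $e^{-t|x|^2}$ on top of a linear tilt $e^{h\cdot x}$ can only inflate the covariance by a factor $Cn^{1/6}\Tilde{K_n}^{1/3}$. The plan is to fix $t>0$ and $h\in\R^n$, write $\nu = \tau_h\mu$ (which is itself $1$ tilt-stable and log-concave, so one may reduce to $h=0$ by absorbing the linear tilt), and study the one-parameter family $\nu_s = \frac{1}{Z_s}\nu\, e^{-s|x|^2}$ for $s\in[0,t]$. The derivative identity $\frac{d}{ds}\Cov(\nu_s)$ expresses the rate of change of the covariance in terms of the third and fourth cumulants of $\nu_s$ against the quadratic $|x|^2$; concretely, $\partial_s \Cov(\nu_s)_{ij} = -\Cov_{\nu_s}(x_ix_j,|x|^2) = -\,\mathrm{(third\ moment\ tensor\ contracted\ with\ }\Id)$. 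Bounding the operator norm of this derivative by something like $C\,\mathrm{poly}\bigl(\norm{\Cov(\nu_s)}_{op}\bigr)\cdot \norm{|x|^2-\E|x|^2}_{\psi_1}$-type quantity, and then Grönwall-ing, should give the claim once the $\psi_1$ norm of $|x|^2$ under the tilted measure is controlled by $\Tilde{K_n}$ up to the stated dimensional loss.

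The key steps, in order: (i) reduce to $h=0$ and reparametrize so we only track $A(s):=\norm{\Cov(\nu_s)}_{op}$; (ii) establish the differential inequality for $A(s)$ using the cumulant identity above together with the Cauchy–Schwarz / Brascamp–Lieb-type bounds available for log-concave measures (the Gaussian-tilted measure $\nu_s$ is again log-concave, so $\Cov_{\nu_s}(x_ix_j,|x|^2)$ can be controlled by $\norm{\nabla(x_ix_j)}$ and $\norm{\nabla|x|^2}$ against $\Cov(\nu_s)$); (iii) control the relevant $\psi_1$ norm $\norm{|X_s|^2-\E|X_s|^2}_{\psi_1}$ where $X_s\sim\nu_s$ by $\Tilde{K_n}$ — here one uses that $\nu_s$, being a Gaussian perturbation of a tilt-stable measure, still has uniformly bounded one-dimensional marginals, and the spectral-gap-type bound $\Var(|X|^2)\lesssim \norm{\Cov}_{op}\,\Tr(\Cov)$ to extract the $n$-dependence; (iv) integrate: if $A'(s)\le C(n,\Tilde{K_n})\,A(s)^{3/2}$ or a similar power, then $A(s)$ stays bounded by $C n^{1/3}\Tilde{K_n}^{2/3}$ on the whole range, giving $\beta^2 \le Cn^{1/3}\Tilde{K_n}^{2/3}$, i.e. $\beta\le Cn^{1/6}\Tilde{K_n}^{1/3}$.

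The main obstacle I anticipate is step (iii)–(iv): getting the right power of $n$ (and of $\Tilde{K_n}$) out of the differential inequality. The naive bound on $\partial_s\Cov$ via $\Tr(\Cov)$ alone would cost a full factor of $n$; the improvement to $n^{1/6}$ must come from interpolating between the crude trace bound and the $\psi_1$-bound on $|X|^2-\E|X|^2$, i.e. splitting $|x|^2$ into a part handled by the strong (sub-exponential) concentration encoded in $\Tilde{K_n}$ and a part handled by the a priori tilt-stability, optimizing the split. Making this interpolation quantitative — and checking that the Gaussian tilt does not degrade the $\psi_1$ estimate, which should follow because $e^{-s|x|^2}$ only makes the measure \emph{more} concentrated — is where the real work lies. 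A secondary technical point is justifying the differentiation under the integral and the boundedness of $Z_s$, which is routine for log-concave $\mu$ with the Gaussian factor present.
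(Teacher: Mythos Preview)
Your plan diverges substantially from the paper's, and as written it has a genuine gap.

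The paper does not use a differential inequality. It proves a direct transference bound (Proposition~\ref{prop_760}) in the spirit of Barthe--Milman: for a \emph{centered} log-concave $\nu$ with $\E_\nu\abs{x}^2=n$,
\[
\sigma_{SG}^2(\nu_t)\ \lesssim\ \min\Bigl(\sigma_{SG}^2(\nu)\bigl(1+t^2 n\,\norm{\,\abs{X}-\E\abs{X}\,}_{\Psi_2}^2\bigr),\ \tfrac{1}{t}\Bigr),
\]
obtained by comparing tail probabilities under $\nu_t$ and $\nu$ via the density ratio $d\nu_t/d\nu$ and bounding the resulting $\chi^2$-type quantity $K(t)=\int (d\nu_t/d\nu)\,d\nu_t$ using only the concentration of $\abs{X}$ under the \emph{original} measure $\nu$. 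Optimizing the minimum at $t\asymp (n\Tilde K_n^2)^{-1/3}$ yields $n^{1/3}\Tilde K_n^{2/3}$. The reduction to a centered $\nu$ is handled by Lemma~\ref{lem:get_rid_of_tilt}, which shows every $\mu_{t,h}$ is a Gaussian perturbation \emph{centered at the barycenter} of some tilt $\tau_{h_0}\mu$; your ``set $\nu=\tau_h\mu$'' leaves the Gaussian off-center and would still need this step.

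The real gap is your step~(iii). You want to control $\norm{\,\abs{X_s}^2-\E\abs{X_s}^2\,}_{\psi_1}$ for $X_s\sim\nu_s$ via $\Tilde K_n$, but $\Tilde K_n$ is by definition a supremum over $1$-tilt-stable measures, and $\nu_s$ is \emph{not} known to be $1$-tilt-stable: a linear tilt of $\nu_s$ is exactly some $\mu_{s,h'}$, whose covariance bound is the very strong tilt-stability constant you are trying to establish. Invoking $\Tilde K_n$ for $\nu_s$ is therefore circular. Your fallback intuition, that ``$e^{-s\abs{x}^2}$ only makes the measure more concentrated'', is false: the tightness half of Proposition~\ref{prop_760} constructs a log-concave $\nu$ for which $\norm{\Cov(\nu_t)}_{op}$ genuinely grows like $1+t^2 n$ for small $t$, so Gaussian tilting \emph{can} inflate the covariance and the one-dimensional marginals. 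Without an independent handle on concentration under $\nu_s$, the Gr\"onwall scheme has nothing usable to integrate, and the Brascamp--Lieb input you mention only supplies the $1/s$ branch, useless near $s=0$. The paper's transference argument avoids this circularity precisely because every concentration input is evaluated at the unperturbed tilt $\tau_{h_0}\mu$, where $1$-tilt-stability holds by hypothesis and $\Tilde K_n$ applies directly.
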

\noindent One ingredient in the proof of Lemma \ref{lem:intro_tilt_implies_strong_tilt} is some sharp analysis of Gaussian perturbations of subgaussian probability measures. Combining Lemmas \ref{lem:intro_strong_tilt_implies_logsob} and \ref{lem:intro_tilt_implies_strong_tilt} we arrive at 
\begin{thm}\label{thm:tilt_logsob}

$\Tilde{G_n} \lesssim n^{1/6}\Tilde{K_n}^{1/3}.$

\end{thm}
\begin{rmk}
Notice that since the Euclidean norm is a $1$-Lipschitz function. By \eqref{eq:log_psi2} we have $\Tilde{K_n} \leq 2\Tilde{G_n}$. Plugging this into Theorem~\ref{thm:tilt_logsob} yields $\Tilde{G_n}\lesssim n^{1/4}$, which is a corollary of Theorem~\ref{thm:MAINLOGSOB}. 
Any improvement over the inequality $\Tilde{K_n} \lesssim n^{1/4}$ would provide a corresponding improvement for $\Tilde{G_n}$.
\end{rmk}

\medskip

We conclude this introduction with some additional comments on the analogy between Conjectures~\ref{conj:KLS} and~\ref{conj:KLSob}. Emmanuel Milman \cite{milman2009role} proved that, for log-concave measures, the Poincaré and log-Sobolev inequalities are respectively equivalent to the (a priori weaker) exponential and Gaussian concentration of Lipschitz functions; see Theorem \ref{thm:milman_logsob} below. Furthermore, it is a well-known consequence of Borell’s lemma that log-concave random variables are $\psi_1$. In particular,
\[
\norm{X\cdot\theta}_{\psi_1}\simeq \norm{X\cdot\theta}_2
\]
whenever $X$ is log-concave and $\theta$ is any vector. Using these two facts, both conjectures may be rewritten as follows:

\begin{conj}
Let $\mu$ be a centered log-concave probability measure. Then
\[
\sup_{F\text{ 1-Lipschitz}} \norm{F-\E_\mu F}_{L^\psi(\mu)}
\;\lesssim\;
\sup_{\abs{\theta}=1}\norm{\langle\,\cdot\,,\theta\rangle}_{L^\psi(\mu)},
\]
where $\psi=\psi_1$ or $\psi=\psi_2$.
\end{conj}

In other words, the concentration properties of Lipschitz functions are conjectured to be, up to a universal constant, the same as those of linear functions. It is of course possible to consider other Young functions, such as $\psi_\alpha(t)=e^{t^\alpha}-1$ for $1<\alpha<2$; see Remark~\ref{rmk_extension}. A significant difference between the KLS case $\alpha=1$ and the others, is that all log-concave measures have $\psi_1$ marginals, but not necessarily $\psi_\alpha$ for $\alpha>1$.

The bounds presented in this paper for Conjecture~\ref{conj:KLSob} match the state of the art on the KLS conjecture prior to Chen’s breakthrough~\cite{chen2021almost}. The best available bound on the KLS constant at the time of writing is $\Gamma_n = O(\sqrt{\log n})$, due to Klartag~\cite{klartag2023logarithmic}. The recent series of spectacular advances on the KLS conjecture (\cite{chen2021almost}, \cite{klartag2022bourgain}, \cite{jambulapati2022slightly}, \cite{klartag2023logarithmic}) all rely on the stochastic localization process introduced by Eldan~\cite{eldan2013thin}, together with a delicate control of the covariance matrix along this process; see \cite{klartag2024isoperimetric} for details. 
In Section~\ref{sec:loc_sto}, we describe an analogous proof scheme to tackle Conjecture~\ref{conj:KLSob} using stochastic localization, and we highlight the key differences between the Poincaré and log-Sobolev settings. Since Conjecture~\ref{conj:KLSob} concerns only a subclass of log-concave probability measures—namely those with subgaussian tails—one would need to establish specific properties of the stochastic localization process starting from a subgaussian log-concave measure. In particular, significant progress on Conjecture~\ref{conj:KLSob} cannot follow from known properties of the stochastic localization process initiated at a general log-concave measure.

    \subsection*{Organization of the paper}
    
    In Section \ref{sec:background}, we recall some backgrounds facts and prove Theorem \ref{thm189}. Section \ref{sec:proof_localization} is devoted to the proof of Theorem \ref{thm:LOCALISATIONLOGSOB} as well as Corollary \ref{cor_secondproof}. In Section \ref{sec:loc_sto}, we investigate an approach to Conjecture \ref{conj:KLSob} via stochastic localization. The only result from this section that we shall use later on is Lemma \ref{lem:intro_strong_tilt_implies_logsob}, which we mentioned in the introduction. Finally, in Section \ref{sec:subclass}, we establish Theorem \ref{thm:logsob_rot_inv} and \ref{thm:tilt_logsob}.
    
    \medskip
    
\textbf{Acknowledgements :} 
The author would like to thank Bo'az Klartag for suggesting the example used to prove the tightness of Proposition~\ref{prop_760}. 
The author is also grateful to the anonymous reviewers for their comments, which greatly helped improve the clarity and presentation of the paper. 
This work was partially supported by the European Research Council (ERC) under the European Union’s Horizon~2020 research and innovation programme, grant agreement No.~101001677 “ISOPERIMETRY”.

\medskip

\section{Background and Proof of Theorem \ref{thm189}}\label{sec:background}
We start by recalling useful facts about subgaussian and subexponential random variables, for which a good reference is \cite{vershynin2018high}, and log-concave vectors. 
\subsection{Subgaussian random variables}
\begin{prop}\label{prop:subgaussian}
Let $X$ be a real random variable. The following properties are equivalent.
\begin{enumerate}
    \item There exists $K_1>0$ such that $\PP(|X|>t)\le 2e^{-t^2/K_1^2}$ for all $t>0$.
    \item There exists $K_2>0$ such that $\E e^{X^2/K_2^2}\le 2$.
    \item There exists $K_3>0$ such that $\E e^{sX}\le 2e^{s^2K_3^2}$ for all $s\in\R$.
\end{enumerate}

Moreover, if $X$ is centered, $\E X=0$, then properties (1)–(3) are also equivalent to

\begin{enumerate}\setcounter{enumi}{3}
    \item There exists $K_4>0$ such that $\E e^{sX}\le e^{s^2K_4^2}$ for all $s\in\R$.
\end{enumerate}

Furthermore, the optimal constants in the inequalities are equivalent up to explicit universal constants.
\end{prop}
\begin{proof}
\((2)\Rightarrow(1)\) by Markov's inequality with $K_1=K_2$,  
\((1)\Rightarrow(2)\) by integration with $K_2=\sqrt{3}K_1$.  
\((2)\Rightarrow(3)\) with $K_3=K_2/2$, since $e^{sx}\le e^{s^2K_2^2/4}e^{x^2/K_2^2}$.  
For \((3)\Rightarrow(2)\), mimicking the proof of \eqref{eq101}: let $Z$ be an independent standard Gaussian,
\[
\E e^{X^2/8K_3^2}=\E e^{XZ/2K_3}\le 2e^{Z^2/4}=2\sqrt{2}\le4.
\]
Hence, by Jensen, $\E e^{X^2/16K_3^2}\le2$, and (2) holds with $K_2\le4K_3$.

For the “Moreover” part, $(4)\!\Rightarrow\!(3)$ is trivial.  
Assume now $\E X=0$ and prove $(3)\!\Rightarrow\!(4)$.  
If $s^2\ge1/(8K_3)$, then $2\le e^{8K_3^2s^2}$ and
\[
\E e^{sX}\le2e^{s^2K_3^2}\le e^{9K_3^2s^2}.
\]
If $s^2\le1/(8K_3)$,
\[
\E e^{sX}\le\E(sX+e^{s^2X^2})
\le(\E e^{X^2/8K_3^2})^{8s^2K_3^2}
\le2^{8K_3^2s^2}=e^{9K_3^2s^2},
\]
using $e^x\le x+e^{x^2}$ and Jensen’s inequality.  
Thus (4) holds with $K_4\le3K_3$.
\end{proof}

If $X$ satisfies any of the above properties, we say that is $X$ is subgaussian, and recall that its $\psi_2$ norm is defined as:
$\norm{X}_{\psi_2} = \inf\left\{t>0 \ / \ \E\left[\exp(X^2/t^2) \leq 2 \right]\right\}$. As explained in the introduction, the Herbst's argument \eqref{eq:gaussian_conc} may be reformulated as follows.
\begin{prop}\label{prop:herbst_psi2}
Let $\mu$ be a probability measure satisfying a log-Sobolev inequality and let f be a centered Lipschitz function. Then 
 $\norm{f}_{L^{\psi_2}(\mu)} \leq 2\rho_{LS} (\mu)\ \abs{f}_{\Lip}$.    
\end{prop}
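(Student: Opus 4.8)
This is the Herbst argument, phrased in terms of the $\Psi_2$-norm; the quadratic Laplace bound \eqref{eq:gaussian_conc} was already quoted in the introduction, so the only thing to do is to run the argument cleanly and then invoke the equivalence of the three sub-gaussian constants from Definition \ref{def:subgaussian}. First I would reduce to the case where $f$ is \emph{bounded} Lipschitz: if the desired inequality holds with a universal constant for every bounded centered Lipschitz function, apply it to the centered truncations $f_N = g_N - \int g_N\, d\mu$ where $g_N = \max(-N,\min(f,N))$, note that $\abs{f_N}_{\Lip}\le \abs{f}_{\Lip}$ and $\int g_N\,d\mu \to \int f\,d\mu = 0$, and pass to the limit $N\to\infty$ by monotone/Fatou convergence in the integral defining $\norm{\cdot}_{\Psi_2(\mu)}$. (Alternatively one may note that an LSI forces sub-gaussian tails, hence finite exponential moments for $f$, and argue directly; the truncation is cleaner for the differentiation-under-the-integral step below.)

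Second, fix a bounded centered Lipschitz $f$ and set $H(s) = \int_{\R^n} e^{sf}\,d\mu$, which is smooth in $s\in\R$ with $H(0)=1$ and $H'(0)=\int f\,d\mu = 0$. Apply the log-Sobolev inequality \eqref{eq:logsob_def} to $g = e^{sf/2}$. Since $\abs{\nabla g}^2 = \tfrac{s^2}{4}\abs{\nabla f}^2 e^{sf} \le \tfrac{s^2}{4}\abs{f}_{\Lip}^2 e^{sf}$ and $\Ent_\mu(g^2) = \Ent_\mu(e^{sf}) = sH'(s) - H(s)\log H(s)$, we obtain the differential inequality
\[
sH'(s) - H(s)\log H(s) \ \le\ \tfrac12\,\rho_{LS}(\mu)^2\,\abs{f}_{\Lip}^2\, s^2\, H(s).
\]

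Third, introduce $\Lambda(s) = \tfrac1s \log H(s)$ for $s\neq 0$. A direct computation gives $\Lambda'(s) = \tfrac{1}{s^2 H(s)}\bigl(sH'(s) - H(s)\log H(s)\bigr)$, so the differential inequality reads $\Lambda'(s) \le \tfrac12\rho_{LS}(\mu)^2\abs{f}_{\Lip}^2$ for all $s\neq 0$. Because $f$ is centered, $\Lambda(s)\to H'(0)/H(0) = 0$ as $s\to 0$; integrating from $0$ to $s>0$ yields $\log H(s) \le \tfrac12\rho_{LS}(\mu)^2\abs{f}_{\Lip}^2\, s^2$, and applying the same bound to $-f$ covers $s<0$. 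Thus $\E_\mu e^{s(f-\E_\mu f)} \le \exp\bigl(\tfrac12\rho_{LS}(\mu)^2\abs{f}_{\Lip}^2 s^2\bigr)$ for all $s\in\R$, which is exactly property (3) of Definition \ref{def:subgaussian} with $K_3 = \rho_{LS}(\mu)\abs{f}_{\Lip}/\sqrt2$. The stated equivalence of the three sub-gaussian constants then gives $\norm{f}_{\Psi_2(\mu)} \lesssim K_3 \lesssim \rho_{LS}(\mu)\,\abs{f}_{\Lip}$, which is the claim.

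The only points requiring care are the differentiation under the integral sign defining $H$ and the limit $\Lambda(0^+)=0$; both are immediate once $f$ is bounded, which is precisely why the truncation reduction is performed first. Everything else is the textbook Herbst computation.
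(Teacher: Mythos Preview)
Your argument is correct and is precisely the standard Herbst computation. Note, however, that the paper does not give a separate proof of this proposition: it is stated as a direct reformulation of the quadratic Laplace bound already quoted in the introduction together with the equivalence of sub-gaussian constants in Definition~\ref{def:subgaussian}. Your write-up supplies exactly those details (the truncation reduction, the differential inequality for $\Lambda(s)=s^{-1}\log H(s)$, and the appeal to the equivalence of $K_2$ and $K_3$), so the approach is the same as the paper's---you have simply spelled it out in full.
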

It is seen, by an application of Jensen's inequality, that the $\psi_2$ norm of a variable controls its $L^2$ norm.
\begin{lem}\label{lem:psi_2_variance} Let $X$ be a subgaussian random variable, then 
$\Var(X) \leq \E X^2 \leq \log(2)\norm{X}_{\psi_2}^2.$
\end{lem}
Using the triangle inequality, we deduce that centering only improves the $\psi_2$ behavior.
\begin{lem}\label{lem:centering_subgauss}
    Let $X$ be a real random variable, then 
    $\left\lVert X - \E X \right\rVert_{\psi_2}
   \le 2 \left\lVert X \right\rVert_{\psi_2}.$

\end{lem}
Finally, we shall need the following standard deviation bound for the norm of a vector with subgaussian marginals.
\begin{prop}\label{prop:norm_subgauss}
    Let $X$ be a random vector in $\R^n$, define $\sigma_{SG}(X) = \sup_{\theta\in\mathbb{S}^{n-1}} \norm{ X\cdot\theta}_{\psi_2}$. Then, there exists a universal constant $c_0>0$ such that for all $t\geq 2c_0\sqrt{n}\sigma_{SG}(X)$
    $$ \PP(\vert X \vert \geq t) \leq \exp\left(-\frac{t^2}{2c_0\sigma^2_{SG}(X)}\right).$$ 
\end{prop}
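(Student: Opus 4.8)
The plan is to normalise so that $\sigma_{SG}(X)=1$ — both the hypothesis $t\ge 2c_0\sqrt n\,\sigma_{SG}(X)$ and the conclusion $\PP(\abs X\ge t)\le\exp(-t^2/(2c_0\sigma_{SG}^2(X)))$ are invariant under the simultaneous rescaling $X\mapsto\lambda X$, $t\mapsto\lambda t$ — and then to run the standard $\varepsilon$-net argument on $\mathbb{S}^{n-1}$, using the identity $\abs X=\sup_{\theta\in\mathbb{S}^{n-1}}\langle X,\theta\rangle$ together with the fact that every marginal $\langle X,\theta\rangle$ is sub-gaussian with $\norm{\langle X,\theta\rangle}_{\Psi_2}\le 1$.

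First I would record the one-dimensional tail bound. Fix a unit vector $\theta$. Since $\norm{\langle X,\theta\rangle}_{\Psi_2}\le 1$, for every $\delta>0$ there is $t\in(0,1+\delta)$ with $\E\exp(\langle X,\theta\rangle^2/t^2)\le 2$, and Markov's inequality applied to $\exp(\langle X,\theta\rangle^2/t^2)$ gives $\PP(\abs{\langle X,\theta\rangle}\ge s)\le 2e^{-s^2/(1+\delta)^2}$; letting $\delta\to 0$ yields $\PP(\abs{\langle X,\theta\rangle}\ge s)\le 2e^{-s^2}$ for all $s>0$ (this is just $(2)\Rightarrow(1)$ of Definition \ref{def:subgaussian} with the constants made explicit). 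Next, fix a $\tfrac12$-net $\mathcal N$ of $\mathbb{S}^{n-1}$; the usual volumetric bound allows $\abs{\mathcal N}\le 5^n$. A routine approximation argument gives $\abs X\le 2\max_{\theta\in\mathcal N}\langle X,\theta\rangle$: taking $\theta^\star\in\mathbb{S}^{n-1}$ with $\langle X,\theta^\star\rangle=\abs X$ and $\theta\in\mathcal N$ with $\abs{\theta-\theta^\star}\le\tfrac12$, Cauchy--Schwarz gives $\abs X=\langle X,\theta\rangle+\langle X,\theta^\star-\theta\rangle\le\langle X,\theta\rangle+\tfrac12\abs X$.

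Combining these with a union bound, for every $t>0$,
$$\PP(\abs X\ge t)\ \le\ \PP\Big(\max_{\theta\in\mathcal N}\langle X,\theta\rangle\ge \tfrac t2\Big)\ \le\ \sum_{\theta\in\mathcal N}\PP\big(\abs{\langle X,\theta\rangle}\ge\tfrac t2\big)\ \le\ 2\cdot 5^{n}\,e^{-t^2/4}.$$
It remains to absorb the entropy factor: writing the right-hand side as $\exp(\log 2+n\log 5-t^2/4)$, one checks that with $c_0=4$ (say) and $t\ge 2c_0\sqrt n$ one has $\log 2+n\log 5-\tfrac{t^2}{4}\le-\tfrac{t^2}{2c_0}$, since this is equivalent to $t^2\big(\tfrac14-\tfrac1{2c_0}\big)\ge\log 2+n\log 5$ and $t^2\ge 4c_0^2 n$ makes the left-hand side at least $8n\ge\log 2+n\log 5$ for $n\ge 1$. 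Undoing the normalisation $X\mapsto X/\sigma_{SG}(X)$ gives the stated bound. I do not anticipate any real obstacle: this is a textbook covering-number estimate, and the only mild care needed is in choosing $c_0$ large enough that the net cardinality $5^n$ is dominated by the Gaussian factor precisely in the range $t\gtrsim\sqrt n\,\sigma_{SG}(X)$.
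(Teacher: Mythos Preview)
Your proposal is correct and follows essentially the same route as the paper: a $\tfrac12$-net on the sphere, the approximation $\abs{X}\le 2\max_{\theta\in\mathcal N}\langle X,\theta\rangle$, and a union bound over the net together with the one-dimensional sub-gaussian tail. The only cosmetic differences are that you normalise to $\sigma_{SG}(X)=1$ and track the constants explicitly (net cardinality $5^n$, $c_0=4$), whereas the paper uses $\abs{\mathcal N}\le e^{2n}$ and declares it is ``work[ing] with sub-optimal constants''.
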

\begin{proof}
    We use a simple net argument, and we work with sub-optimal constants. Let $\mathcal{N}$ be a $\frac{1}{2}$-net of the sphere. That is a collection of points on the sphere such that any point on the sphere is at distance at most $\frac{1}{2}$ of $\mathcal{N}$. It is classical that we might choose $\mathcal{N}$ such that 
    $$ \vert\mathcal{N}\vert \leq e^{2n}$$
    where in that context $\vert . \vert$ stands for the cardinal of the set. Now, for any $x\in\R^n$, we have that 
    \begin{equation}\label{eq:norm_on_net}
         \vert x\vert \leq 2\sup_{\theta\in\mathcal{N}} x\cdot\theta.
    \end{equation} Now, we use a simple union bound to establish the property.  Let $t\geq 0$,
    \begin{align*}
        \PP(\vert X \vert \geq t) &\leq \PP\left(\exists \theta \in\mathcal{N} \ / \ x\cdot\theta \geq t\right)\\
        &\leq \vert N \vert \exp\left(-t^2/c_0\sigma^2_{SG}(X)\right)\\
        &\leq \exp\left(2n - t^2/c_0\sigma^2_{SG}(X)\right)\\
        &\leq \exp\left(-\frac{t^2}{2c_0\sigma^2_{SG}(X)}\right) \quad\quad\quad \text{for } t\geq 2c_0\sqrt{n}\sigma_{SG}(X).
    \end{align*}
where we chose $c_0\geq 1$.
\end{proof}
\subsection{Subexponential random variables}
\begin{defi}\label{def:subexponential} Let $X$ be a real random variable. We say that if $X$ is subexponential if
there exists $K>0$ such that 
$$\E\exp(\abs{X}/K)\leq 2. $$
In that case, we denote by $\norm{X}_{\psi_1}$ the lowest such $K$.
\end{defi}
Just like the $\psi_2$ norm, the $\psi_1$ norm controls the $L^2$ norm.
\begin{lem}\label{lem:psi_1_variance} Let $X$ be a subexponential random variable, then 
$$\Var(X) \leq \E X^2 \leq \norm{X}_{\psi_1}^2$$
\end{lem}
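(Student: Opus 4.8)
The plan is to treat the two inequalities separately, the first being immediate and the second following from a one-line power-series estimate that mirrors the proof of Lemma \ref{lem:psi_2_variance}.

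\textbf{Step 1 (the left inequality).} This is trivial: by definition $\Var(X) = \E X^2 - (\E X)^2 \leq \E X^2$, with equality precisely when $X$ is centered. Nothing further is needed here.

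\textbf{Step 2 (the right inequality).} Assume $\norm{X}_{\psi_1} < \infty$, as otherwise there is nothing to prove, and set $K = \norm{X}_{\psi_1}$, so that by Definition \ref{def:subexponential} we have $\E \exp(\abs{X}/K) \leq 2$. I would expand the exponential as a power series and use Tonelli's theorem (legitimate since every term is nonnegative) to exchange the sum and the expectation:
$$\E \exp\!\left(\frac{\abs{X}}{K}\right) \;=\; \sum_{n\geq 0} \frac{\E \abs{X}^n}{K^n\, n!} \;\geq\; 1 + \frac{\E\abs{X}}{K} + \frac{\E X^2}{2K^2}.$$
Combining this with $\E \exp(\abs{X}/K) \leq 2$ and discarding the nonnegative term $\E\abs{X}/K$ yields $\E X^2/(2K^2) \leq 1$, i.e. $\E X^2 \leq 2K^2 = 2\norm{X}_{\psi_1}^2$, which is the claim.

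\textbf{On the difficulty.} There is essentially no obstacle here; the only point worth noting is that retaining the linear term $\E\abs{X}/K \geq 0$ in the lower bound is what produces the clean constant $2$ — a cruder argument via the tail bound $\PP(\abs{X}>t) \leq 2e^{-t/K}$ and $\E X^2 = \int_0^\infty 2t\,\PP(\abs{X}>t)\,dt$ would only give $\E X^2 \leq 4K^2$. One should also record the harmless caveat that the inequality is vacuous when $\norm{X}_{\psi_1} = \infty$.
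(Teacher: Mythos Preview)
Your proof is correct and follows essentially the same route as the paper, which invokes the pointwise inequality $1+x^2\le e^{2|x|}$; your Taylor truncation $e^{|x|}\ge 1+|x|+x^2/2$ is the same device, and by keeping the linear term you recover the constant $2$ exactly as stated (whereas $1+x^2\le e^{2|x|}$ applied with $2|x|=|X|/K$ would only yield $\E X^2\le 4K^2$).
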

\begin{proof}
    The lemma follows from the real inequality $1+x^2\leq e^{\abs{x}}$.
\end{proof}
And we deduce the centering lemma
\begin{lem}\label{lem:centering_subexp}
    Let $X$ be a real random variable, then 
    $$\norm{X-\E X}_{\psi_1} \ \leq\ 2\norm{X}_{\psi_1}.$$
\end{lem}

\subsection{Log-concave vectors}
Let $\mu$ be a probability measure on $\R^n$. Its concentration function $\alpha_\mu$ is defined for $r\geq0$ by
$$\alpha_\mu(r) = \sup_{A,\, \mu(A)=\frac{1}{2}}\mu(A_r^c),$$
where $A_r = \{x: d(x,A)\le r\}$ and $d$ denotes the Euclidean distance. It is classical that if $\mu$ satisfies a log-Sobolev inequality with constant $\rho$, then
$$\alpha_\mu(r) \leq \exp(-r^2/\rho^2),$$
which is a reformulation of \eqref{eq:gaussian_conc}.

The next result, due to Milman \cite{milman2009role}, shows that for log-concave vectors, the study of log-Sobolev inequalities can be reduced to the a priori weaker Gaussian concentration.

\begin{thm}\label{thm:milman_logsob}
Let $\mu$ be a log-concave measure, and $K>0$ such that
$$\alpha_\mu(r) \leq \exp(-r^2/K^2).$$
Then
$$\rho_{LS}(\mu) \lesssim K.$$
\end{thm}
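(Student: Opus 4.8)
The plan is to route through the isoperimetric profile $I_\mu(t)=\inf\{\mu^+(A):\mu(A)=t\}$. For log-concave $\mu$, the concentration function, the profile $I_\mu$, and the functional inequalities we care about are all tied together, and the whole content of the statement is that convexity forbids the ``thin neck'' configurations which would otherwise decouple concentration from isoperimetry. Concretely, I would split the proof into: (i) upgrade the hypothesis $\alpha_\mu(r)\le e^{-r^2/K^2}$ to a Gaussian-type isoperimetric inequality; (ii) deduce the log-Sobolev inequality from that isoperimetric inequality. Note that the reverse implication (LSI $\Rightarrow$ Gaussian concentration) is the Herbst argument already recorded in the introduction, so only this harder direction is at issue.

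For step (i) --- the heart of the matter, and the only place where convexity is used --- I would show
$$ I_\mu(t)\ \gtrsim\ \tfrac1K\,t\sqrt{\log(1/t)},\qquad t\in(0,\tfrac12].$$
Writing $d\mu=e^{-V}\,dx$ with $\mathrm{Hess}\,V\succeq 0$, the diffusion semigroup $(P_s)$ with generator $\Delta-\nabla V\cdot\nabla$ and invariant measure $\mu$ satisfies the pointwise commutation $|\nabla P_s f|\le P_s|\nabla f|$. Given a Borel set $A$ with $\mu(A)=t$, one compares $P_s\ind_A$ with $\ind_A$ in $L^1(\mu)$: the gradient commutation bounds $\|P_s\ind_A-\ind_A\|_{L^1(\mu)}$ by (a constant times) $\sqrt s\,\mu^+(A)$, while the concentration hypothesis forces $P_s\ind_A$ to deposit a definite amount of mass at scale $\sqrt s$ away from $A$, hence a definite discrepancy; optimizing over the diffusion time $s$ and feeding in the Gaussian shape of $\alpha_\mu$ produces the $\sqrt{\log(1/t)}$ factor with only universal losses. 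This is E.~Milman's self-improvement of concentration under a curvature lower bound; an alternative is a localization / needle-decomposition reduction in the style of Kannan--Lov\'asz--Simonovits to the one-dimensional log-concave case.

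For step (ii), I would pass from the Gaussian isoperimetric inequality to the log-Sobolev inequality via the co-area formula together with the (convexity-free) equivalence, up to universal constants, between the near-zero behaviour $I_\mu(t)\asymp t\sqrt{\log(1/t)}$ and the log-Sobolev inequality: writing $\Ent_\mu(f^2)$ as an integral over the super-level sets of $f^2$ and bounding each contribution using $I_\mu$ leaves a one-dimensional inequality that the profile $t\sqrt{\log(1/t)}$ solves exactly. (If one prefers not to quote this equivalence: the bound at $t=\tfrac12$ already gives a Cheeger, hence Poincar\'e, inequality with $C_P(\mu)\lesssim K$, which controls small scales while the Gaussian profile controls the tails, and gluing the two yields $\rho_{LS}(\mu)\lesssim K$.) The main obstacle is unambiguously step (i): for a non-convex measure the hypothesis says nothing about $I_\mu$, and one must manufacture a pointwise perimeter lower bound out of a purely integral concentration statement, with the correct Gaussian profile and no dimension-dependent loss --- which is exactly where all the strength of log-concavity is spent.
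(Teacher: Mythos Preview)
The paper does not actually prove this theorem; it is stated as a background result and attributed to E.~Milman \cite{milman2009role}, with a remark that Klartag's needle localization \cite{klartag2017needle} can be adapted to give an alternative proof. Your sketch is essentially Milman's original argument: under $\mathrm{Hess}\,V\succeq 0$ (i.e.\ CD$(0,\infty)$) the semigroup commutation $|\nabla P_s f|\le P_s|\nabla f|$ upgrades Gaussian concentration to a Gaussian isoperimetric profile $I_\mu(t)\gtrsim K^{-1}t\sqrt{\log(1/t)}$, and Ledoux's equivalence between this profile and the log-Sobolev inequality closes the loop. So your proposal matches the cited source, and the paper itself adds nothing beyond the pointer to the literature and the remark about the localization alternative.
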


\begin{rmk}
In the same line of work, Milman proved that for log-concave probabilities, the Poincaré inequality is quantitatively equivalent to the exponential concentration of Lipschitz functions. This was reproved by Klartag (\cite{klartag2017needle}, Section~5.1) via a new localization lemma which comes with a "guiding" $1$-Lipschitz function. Klartag’s argument can also be adapted to yield an alternative proof of Theorem~\ref{thm:milman_logsob}.
\end{rmk}

The following is the classical Bakry–Émery criterion \cite{bakry2006diffusions}, which provides a quantitative bound on the log-Sobolev constant of strongly log-concave measures.
\begin{thm}\label{thm:bakry_emery}
Let $\mu = e^{-V(x)}\,dx$ be a log-concave probability measure on $\R^n$, and assume $\nabla^2V \succeq tI_n$ for some $t>0$, then
\[
\rho_{LS}^2(\mu) \le \frac{1}{t}.
\]
\end{thm}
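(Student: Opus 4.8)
This is the classical Bakry-Emery criterion; I will only indicate the standard semigroup proof. The plan is to run the Langevin dynamics reversible for $\mu$, convert the convexity hypothesis into a pointwise curvature bound, and then integrate a differential inequality for the entropy along the flow. Throughout I pretend that $V$ is smooth and that $f$ is a bounded function bounded away from $0$ with well-behaved derivatives; the general statement then follows by a routine approximation argument (mollify $V$, restrict to large balls, pass to the limit).

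First I would introduce the Markov semigroup $(P_s)_{s\geq0}$ with generator $L=\Delta-\nabla V\cdot\nabla$, which is symmetric in $L^2(\mu)$ and admits $\mu$ as invariant measure, so that $\int P_s g\,d\mu=\int g\,d\mu$ for every $g$ and $P_s g\to\int g\,d\mu$ as $s\to\infty$. Writing $\Gamma(g)=\abs{\nabla g}^2$ for the carré du champ and $\Gamma_2(g)=\tfrac12 L\Gamma(g)-\Gamma(g,Lg)$ for the iterated operator, the Bochner formula gives the identity
$$\Gamma_2(g)=\norm{\nabla^2 g}_{\mathrm{HS}}^2+\langle\nabla g,(\nabla^2 V)\nabla g\rangle\ \geq\ t\,\abs{\nabla g}^2=t\,\Gamma(g),$$
the inequality being exactly the hypothesis $\nabla^2 V\geq tI_d$. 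This is the curvature condition $CD(t,\infty)$, valid for all smooth $g$.

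Next I would feed this curvature bound into an entropy interpolation along the flow. For $\phi(x)=x\log x$ and a fixed horizon $s>0$, set $\Lambda(r)=P_r(\phi(P_{s-r}f))$ for $r\in[0,s]$, so that $\Lambda(s)-\Lambda(0)=P_s(f\log f)-(P_sf)\log(P_sf)$. Using the chain rule $L\psi(g)=\psi'(g)\,Lg+\psi''(g)\,\Gamma(g)$ one computes $\Lambda'(r)=P_r\!\left(\frac{\abs{\nabla g_r}^2}{g_r}\right)\geq 0$ with $g_r=P_{s-r}f$; differentiating once more and invoking the curvature bound $\Gamma_2\geq t\Gamma$ applied to $\log g_r$ yields $\Lambda''(r)\geq 2t\,\Lambda'(r)$, so that $r\mapsto e^{-2tr}\Lambda'(r)$ is nondecreasing on $[0,s]$. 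Integrating this twice over $[0,s]$ produces the local logarithmic Sobolev inequality
$$P_s(f\log f)-(P_sf)\log(P_sf)\ \leq\ \frac{1-e^{-2ts}}{2t}\;P_s\!\left(\frac{\abs{\nabla f}^2}{f}\right).$$
Carrying out this second differentiation, and justifying the interchanges of differentiation and integration that it requires, is the step I expect to be the main technical obstacle.

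Finally I would integrate the last display against $\mu$ and let $s\to\infty$: by invariance $\int P_s(\cdot)\,d\mu=\int(\cdot)\,d\mu$, so the left-hand side converges to $\Ent_\mu(f)$ and the prefactor to $\tfrac1{2t}$, whence $\Ent_\mu(f)\leq\frac{1}{2t}\int\frac{\abs{\nabla f}^2}{f}\,d\mu$ for every $f\geq0$. Taking $f=g^2$ and using $\abs{\nabla g^2}^2/g^2=4\,\abs{\nabla g}^2$ gives $\Ent_\mu(g^2)\leq\frac{2}{t}\int\abs{\nabla g}^2\,d\mu$, which, upon comparison with the definition \eqref{eq:logsob_def} of $\rho_{LS}$, is precisely $\rho_{LS}^2(\mu)\leq 1/t$. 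A standard approximation then removes the smoothness assumptions on $V$ and $g$ and concludes.
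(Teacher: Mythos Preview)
The paper does not supply a proof of this theorem: it is stated as ``the celebrated Bakry--Emery criterion \cite{bakry2006diffusions}'' and used as a black box throughout. Your outline is the standard semigroup argument and is correct; the interpolation $\Lambda(r)=P_r\bigl(\phi(P_{s-r}f)\bigr)$, the identity $\Lambda'(r)=P_r\bigl(\Gamma(g_r)/g_r\bigr)$, the differential inequality $\Lambda''\geq 2t\,\Lambda'$ coming from $\Gamma_2\geq t\,\Gamma$, and the resulting local log-Sobolev inequality are exactly the textbook steps. One small quibble: you write ``integrating this twice'', but only a single integration of $\Lambda'(r)\leq e^{2t(r-s)}\Lambda'(s)$ over $[0,s]$ is needed. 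Also, the passage $s\to\infty$ after integrating against $\mu$ implicitly uses that $\int(P_sf)\log(P_sf)\,d\mu\to(\E_\mu f)\log(\E_\mu f)$, i.e.\ ergodicity of the semigroup; this is unproblematic here since $\nabla^2V\geq tI_d$ already forces a spectral gap, but it is worth flagging as part of the ``routine approximation'' you defer.
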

Finally, we shall need the following results on one-dimensional log-concave vectors.

\begin{lem}\label{lem:one_dimensional_constants} Let $X$ be a log-concave real random variable with unit variance $\Var(X)=1$. Then there exists universal constants $c_0,c_1,c_2$ such that 
\begin{enumerate}
    \item $\Var(X^2)^{1/2}\geq c_0.$
    \item $\Var((X-\E X)^2)^{1/2} \leq c_1.$
    \item $\norm{X-\E X}_{\psi_1} \leq c_2.$
\end{enumerate}
\end{lem}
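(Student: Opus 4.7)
The plan is to treat the three parts in reverse order of dependency, as (2) will follow from (3), and (1) is the most substantive.

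\textbf{Part (3).} My approach is to invoke Borell's lemma: any centered log-concave random variable $Y$ on $\R$ has an exponentially decaying density, and the rate of decay is controlled by the variance. Quantitatively, one gets $\PP(|Y|\ge t) \le 2\exp(-ct/\sqrt{\Var(Y)})$, which is exactly the sub-exponential tail property of Definition \ref{def:subexponential}. Applied to $Y = X - \E X$ with $\Var(X)=1$, this yields $\norm{X - \E X}_{\psi_1} \le c_2$ for a universal $c_2$.

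\textbf{Part (2).} This reduces to a moment bound, since
\begin{equation*}
\Var\bigl((X - \E X)^2\bigr) \;\le\; \E(X - \E X)^4.
\end{equation*}
For any $\psi_1$ variable $Y$, integrating the tail bound $\PP(|Y|\ge t) \le 2e^{-t/\norm{Y}_{\psi_1}}$ against $4t^3\,dt$ shows $\E Y^4 \lesssim \norm{Y}_{\psi_1}^4$. Combining with part (3) gives $\Var((X-\E X)^2) \le c_1^2$.

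\textbf{Part (1).} This is the only genuinely non-trivial piece, and I would handle it by appealing to Bobkov's inequality \eqref{eq:bobkov_spectral}, which in dimension one reads
\begin{equation*}
C_P(X)^2 \;\lesssim\; \Var(X^2).
\end{equation*}
The key observation is that the Poincaré constant of a variance-$1$ distribution is bounded below: testing \eqref{eq:Poincaré_def} with the identity function $f(x)=x$, whose derivative is $1$, gives $1 = \Var(X) \le C_P(X)^2$. Chaining these two inequalities produces the desired lower bound $\Var(X^2) \ge c_0^2$.

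The main obstacle, if any, is purely notational: the Bobkov inequality is stated for $|X|^2$ in $\R^n$, but in one dimension $|X|^2=X^2$, so it applies directly. All three statements are insensitive to the value of the mean $\E X$, so no special care is needed at this step.
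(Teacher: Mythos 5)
Your parts (2) and (3) match the paper's argument exactly: Borell's lemma gives the $\psi_1$ bound, and $\Var((X-\E X)^2)\le \E(X-\E X)^4 \lesssim \Var(X)^2$ gives (2). For part (1), however, you take a genuinely different route. The paper proves the lower bound on $\Var(X^2)$ by a direct density argument: it invokes the folklore fact that a unit-variance one-dimensional log-concave density is bounded below by a universal constant $c$ on the interval $[a-c,a+c]$ around its mean $a$, concludes that $X$ lands in a fixed subinterval with probability bounded below, and on that event $X^2$ is bounded away from $\E X^2 = a^2+1$ by a constant. You instead chain two black-box inequalities: Bobkov's spectral-gap estimate \eqref{eq:bobkov_spectral}, $C_P(X)^2 \lesssim \Var(X^2)$, and the trivial lower bound $C_P(X)^2 \ge \Var(X)=1$ obtained by testing the Poincar\'e inequality with $f(x)=x$. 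Both arguments are correct. Yours is shorter and avoids the density lower bound, at the cost of importing a nontrivial theorem; the paper's is more elementary and self-contained, relying only on a pointwise estimate on the density. The one remark worth tightening: you say ``all three statements are insensitive to $\E X$,'' which is not literally true for $\Var(X^2)$ itself, but your argument does yield a uniform constant over all means since neither Bobkov's inequality nor the Poincar\'e lower bound cares about centering, so the conclusion stands.
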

\begin{proof}
    The existence of $c_2$ is a reformulation of Borell's lemma, which also implies the existence of $c_1$ :
    $$\Var((X-\E X)^2)\leq \E(X-\E X)^4 \lesssim \Var(X)^2.$$

    Finally, there are various ways of proving the existence of $c_0$. Let $f$ be the density of $X$ and let $a=\E X$. There exists a constant $c>0$, such that 
    \begin{equation}\label{eq_325}
        f(x)\geq c \quad\quad \text{on }[a-c,a+c].
    \end{equation}
    A proof can be found in \cite{eldan2014dimensionality} for example. We have 
    $$\E X^2=a^2+1$$
    Assume without loss of generality that $a\geq0$. By \eqref{eq_325} $X$ is in the interval $[a-1/2,a]$ with probability at least $p=\min(c^2,c/2)>0$. Thus
    $$\Var(X^2) \geq p/2.$$
    
\end{proof}

\subsection{A short proof of Theorem \ref{thm189}}  
Recall that $\Gamma_n$ denotes the KLS constant. The following estimate on the concentration function of log-concave probability measures was proved in \cite{bizeul2022measures}.  

\begin{thm}\label{thm:conc_function_sl}  
Let $\mu$ be a log-concave probability measure with covariance matrix $A$. Then there exists a constant $c_1 > 0$ such that  
\[
\alpha_\mu(r) \leq \exp\left(-c_1 \min\left(\frac{r}{\norm{A}_{op}^{1/2}},\, \frac{r^2}{\norm{A}_{op}\Gamma_n^2\log n}\right)\right).
\]
\end{thm}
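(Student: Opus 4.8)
The plan is to run the stochastic localization process started at $\mu$ and to read off the concentration of an arbitrary $1$-Lipschitz function from a martingale attached to it, in the spirit of \cite{klartag2023logarithmic}. First I would reduce to the following: for every $1$-Lipschitz $\phi\colon\R^n\to\R$ of $\mu$-median zero, $\mu(\phi\ge r)\le\exp\big(-c_1\min(r\,\norm{A}_{\op}^{-1/2},\,r^2\,(\norm{A}_{\op}\Psi_n^2\log n)^{-1})\big)$; taking $\phi(x)=d(x,S)$ with $\mu(S)=\tfrac12$ recovers $\alpha_\mu(r)$, and after an affine normalisation we may assume $\norm{A}_{\op}=1$. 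Now set $d\mu_t=\tfrac1{Z_t}\exp(\langle c_t,x\rangle-\tfrac t2\abs{x}^2)\,d\mu$ with $dc_t=\bary(\mu_t)\,dt+dW_t$, and write $a_t=\bary(\mu_t)$, $A_t=\Cov(\mu_t)$. For each fixed $x$ the density $\frac{d\mu_t}{d\mu}(x)$ is a martingale, so $t\mapsto\mu_t(B)$ is a martingale with $\E\,\mu_t(B)=\mu(B)$ for every Borel $B$. Two deterministic facts are used throughout: since $\mu_t$ carries the Gaussian factor $e^{-t\abs{x}^2/2}$, it is $t$-uniformly log-concave, hence $A_t\preceq t^{-1}\Id$ by Brascamp--Lieb and $\rho_{LS}(\mu_t)\le t^{-1/2}$ by Bakry--\'Emery (Theorem \ref{thm:bakry_emery}); via Proposition \ref{prop:herbst_psi2} the marginals of $\mu_t$ are $t^{-1/2}$-subgaussian, and Proposition \ref{prop:norm_subgauss} then shows $\mu_t$ is Gaussian-concentrated around $a_t$ at scale $t^{-1/2}$. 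Finally, attach the martingale $g_t=\E_{\mu_t}\phi$; then $g_0=\E_\mu\phi$, $dg_t=\langle\Cov_{\mu_t}(\phi,x),dW_t\rangle$, so that
\[
d\langle g\rangle_t=\abs{\Cov_{\mu_t}(\phi,x)}^2\,dt\ \le\ \Var_{\mu_t}(\phi)\,\norm{A_t}_{\op}\,dt\ \le\ C_P(\mu_t)^2\,\norm{A_t}_{\op}\,dt,
\]
using that $\phi$ is $1$-Lipschitz, and I bound $C_P(\mu_t)^2\le\min\big(t^{-1},\,\Psi_n^2\norm{A_t}_{\op}\big)$ --- the first estimate from Bakry--\'Emery, the second from the definition of $\Psi_n$ applied to the log-concave measure $\mu_t$.

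The technical heart is the operator-norm stability of the process. Let $\tau=\inf\{t\ge0\colon\norm{A_t}_{\op}\ge2\}$; I would prove that $\PP(\tau\le T)$ is polynomially small in $n$ whenever $T\lesssim(\Psi_n^2\log n)^{-1}$. This is the Eldan-type moment computation: from $dA_t=\mathcal M_t\,dW_t-A_t^2\,dt$, where $\mathcal M_t$ encodes the third moments of $\mu_t$, Itô's formula applied to $\Tr(A_t^q)$ produces a correction term controlled by the thin-shell quantity of $\mu_t$, which by the definition of $\Psi_n$ (legitimate since $\mu_t$ is log-concave) is $O(\Psi_n^2\norm{A_t}_{\op})$; one is led to a Gr\"onwall inequality $\tfrac{d}{dt}\E\,\Tr(A_{t\wedge\tau}^q)\lesssim q\Psi_n^2\,\E\,\Tr(A_{t\wedge\tau}^q)$, hence $\E\,\Tr(A_{T\wedge\tau}^q)\le n\,e^{Cq\Psi_n^2T}$, and Markov's inequality together with $\norm{A_{T\wedge\tau}}_{\op}^q\le\Tr(A_{T\wedge\tau}^q)$ and the choice $q$ of order $\log n$ closes the estimate. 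I expect this step to be the main obstacle: squeezing the bound into a \emph{single} power of $\log n$ requires the sharp linear-in-$q$ Gr\"onwall rate (rather than the naive quadratic one), which is exactly where one must use the thin-shell bound underlying $\Psi_n$ at full strength.

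It remains to assemble the pieces. Since $\E\,\mu_T(\cdot)=\mu(\cdot)$,
\[
\mu(\phi\ge r)=\E\big[\mu_T(\phi\ge r)\big]\ \le\ \PP(\tau\le T)+\E\big[\mu_T(\phi\ge r)\,\ind_{\{\tau>T\}}\big],
\]
and on $\{\tau>T\}$ I split according to $g_T=\E_{\mu_T}\phi$: if $g_T\le r/2$ then the $t^{-1/2}$-subgaussianity of $\phi$ under $\mu_T$ gives $\mu_T(\phi\ge r)\le\exp(-cTr^2)$, while $\PP(g_T\ge r/2,\ \tau>T)\le\exp(-cr^2/\langle g\rangle_{T\wedge\tau})$ by the Dubins--Schwarz representation of the continuous martingale $g_{t\wedge\tau}$. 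Combining with the bound $\langle g\rangle_{t\wedge\tau}\le\int_0^{t}2\min(s^{-1},2\Psi_n^2)\,ds$ from the first paragraph, two choices of horizon produce the two regimes of the theorem. For $r$ below the crossover $\norm{A}_{\op}^{1/2}\Psi_n^2\log n$ (i.e. $r\lesssim\Psi_n^2\log n$ after normalisation), take $T$ of order $(\Psi_n^2\log n)^{-1}$: then $\PP(\tau\le T)$ is negligible, $\langle g\rangle_{T\wedge\tau}\lesssim(\log n)^{-1}$, and the residual term $\exp(-cTr^2)$, which is $\exp(-cr^2/(\Psi_n^2\log n))$, is the largest of the three --- yielding the Gaussian part of the claim. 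For $r$ above the crossover, run the process well past time $\Psi_n^{-2}$, beyond which the Brascamp--Lieb bound $A_t\preceq t^{-1}\Id$ makes the stopping time unnecessary and forces $d\langle g\rangle_t\le t^{-2}\,dt$, so that $\langle g\rangle_\infty\lesssim\Psi_n^2$ on $\{\tau>\Psi_n^{-2}\}$ and the horizon term vanishes as $T\to\infty$; this gives $\mu(\phi\ge r)\lesssim\PP(\tau\le\Psi_n^{-2})+\exp(-cr^2/\Psi_n^2)$, and since $\exp(-cr^2/\Psi_n^2)\le\exp(-cr)$ once $r\gtrsim\Psi_n^2$ and the probability term is no larger in that range, we obtain the linear part $\exp(-cr/\norm{A}_{\op}^{1/2})$. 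Taking the better of the two estimates, and undoing the normalisation, is the assertion.
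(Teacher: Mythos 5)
You should know at the outset that the paper does not prove Theorem \ref{thm:conc_function_sl}: it is imported verbatim from \cite{bizeul2022measures}, so there is no internal proof to compare against; the cited source does argue via stochastic localization, and your scheme --- reduce to one-sided deviations of a $1$-Lipschitz $\phi$, use $A_t\preceq t^{-1}\Id$ and Bakry--\'Emery/Herbst for $\mu_t$, control the martingale $g_t=\E_{\mu_t}\phi$ through $d\langle g\rangle_t\le C_P(\mu_t)^2\norm{A_t}_{\op}\,dt$ with $C_P(\mu_t)^2\le\min(t^{-1},\Psi_n^2\norm{A_t}_{\op})$, and control $\norm{A_t}_{\op}$ by the $\Tr(A_t^q)$ moment method --- is the right family of ideas and is aligned with that method.

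As written, however, the assembly does not yield the stated inequality, and the reason is quantitative, not cosmetic. Your stability step gives $\PP(\tau\le T)\le n\,2^{-q}e^{Cq\Psi_n^2T}$ with $q\simeq\log n$, i.e.\ only polynomially small in $n$, and this enters the final bound \emph{additively}. After normalising $\norm{A}_{\op}=1$, the claimed bound at the crossover $r\simeq\Psi_n^2\log n$ is $n^{-c\Psi_n^2}$, and in the linear regime it is $e^{-cr}$, which for large $r$ is far below any fixed power of $n$; an additive error $n^{-K}$ therefore ruins the estimate for all $r\gtrsim\Psi_n\log n$, so your argument only covers roughly the range $r\lesssim\norm{A}_{\op}^{1/2}\Psi_n\log n$, a factor $\Psi_n$ short of the crossover and nowhere near the linear regime. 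The sentence ``the probability term is no larger in that range'' is precisely the missing statement, not a proof of it. Relatedly, your treatment of the linear regime needs $\PP(\tau\le\Psi_n^{-2})$ to be negligible, but your Gr\"onwall estimate only controls the process up to time $\simeq(\Psi_n^2\log n)^{-1}$, a factor $\log n$ earlier; nothing you prove says anything about $\norm{A_t}_{\op}$ on the intermediate window, and the deterministic bound $A_t\preceq t^{-1}\Id$ does not help there. To close the gap the bad-event probability must itself decay like the target exponent: for instance run the moment bound with $q\simeq\max(\log n,\ r/\norm{A}_{\op}^{1/2})$ and re-balance the horizon against the Gr\"onwall rate, or assemble through the Laplace transform, proving a Bernstein-type bound $\log\E_\mu e^{\lambda(\phi-\E_\mu\phi)}\lesssim\lambda^2\norm{A}_{\op}\Psi_n^2\log n$ for $\abs{\lambda}\le\norm{A}_{\op}^{-1/2}$ and handling the bad event multiplicatively (H\"older) rather than by a union bound. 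Until one of these is carried out --- and the linear-in-$q$ Gr\"onwall rate you invoke is itself only sketched, as you acknowledge --- the proposal establishes a weaker statement than the theorem.
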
  

\noindent We are now in position to prove Theorem \ref{thm189}.  

\begin{proof}  
Let $\mu$ be a $1$-subgaussian log-concave probability measure, and let $X$ be distributed according to $\mu$. We assume, without loss of generality, that $\mu$ is centered. Let $A$ be the covariance matrix of $\mu$. By Lemma \ref{lem:psi_2_variance},  
\[
A = \Cov(\mu) \preceq I_n.
\]  
By Emanuel Milman's reduction (Theorem \ref{thm:milman_logsob}), it is enough to estimate the concentration function $\alpha_\mu$. Let $S$ be any set of half measure, $\mu(S) = 1/2$. By Markov’s inequality,  
\[
\PP(\abs{X} \geq 2\sqrt{n}) \ \leq\  \PP(\abs{X} \geq 2\Tr(A)^{1/2}) \ \leq\ \frac{1}{4},
\]
so that $S$ intersects the ball of radius $2\sqrt{n}$. Hence, for any $r \geq 4\sqrt{n}$, we have $S_r^c \subset B(0,r/2)^c$. Using Proposition \ref{prop:norm_subgauss}, we obtain  
\begin{equation}\label{eq:conc_large_r}
    \alpha_\mu(r) \leq \exp\left(-\frac{r^2}{c}\right) \qquad \text{for } r \geq R_0 = c\sqrt{n},
\end{equation}
for some absolute constant $c > 0$.  

For small values of $r$, note that for $r \leq R_0$, we have $r \geq r^2 / R_0$. Plugging this into Theorem \ref{thm:conc_function_sl} yields, for all $r \leq R_0$,  
\begin{align}
    \alpha_\mu(r) &\leq \exp\left(-\frac{c'r^2}{\max\left(\sqrt{n}\norm{A}_{op}^{1/2},\, \norm{A}_{op}\Gamma_n^2\log n \right)}\right)\nonumber\\
    &\leq \exp\left(-\frac{c''r^2}{\sqrt{n}\norm{A}_{op}^{1/2}}\right), \label{eq:4conc_small_r}
\end{align}
where we used the fact that $\Gamma_n^2\log n = O(\sqrt{n})$, which has been known since the breakthrough of Chen \cite{chen2021almost}. 
Combining \eqref{eq:conc_large_r} and \eqref{eq:4conc_small_r} finally yields  
\[
\alpha_\mu(r) \leq \exp\left(-\frac{c'''r^2}{\max(1, \sqrt{n}\norm{A}_{op}^{1/2})}\right)
\]
which concludes the proof.  
\end{proof}

\begin{rmk}\label{rmk_extension}
Using a straightforward generalization of the argument, one may show that for any $1 < \alpha \leq 2$,
\[
\sup_{F\ \text{1-Lipschitz}}
\norm{F - \E_\mu F}_{\psi_\alpha}
\;\lesssim\;
n^{\frac{\alpha - 1}{\alpha}}
\sup_{\abs{\theta} = 1}
\norm{\langle \cdot , \theta \rangle}_{\psi_\alpha},
\]
for all centered log-concave probability measures $\mu$ on $\R^n$.
\end{rmk}

\section{Proof of Theorem \ref{thm:LOCALISATIONLOGSOB}}\label{sec:proof_localization}
In this section, we prove Theorem \ref{thm:LOCALISATIONLOGSOB}. The proof consists in a reduction to dimension one via a localization argument, together with a solution for the one-dimensional case.
\subsection{The one dimensional case}
Our goal is to prove Theorem \ref{thm:LOCALISATIONLOGSOB} in dimension $1$.
\begin{lem}\label{lem:1D_case} Let $X$ be a log-concave random variable on the real line, then
$$\rho_{LS}^2(X) \lesssim \norm{X^2 - \E X^2}_{\psi_1}.$$
\end{lem}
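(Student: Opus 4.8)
The plan is to use the translation invariance of the log-Sobolev constant to reduce to the centered case — where Bobkov's one-dimensional bound applies directly — the real work being to control how much recentering can inflate the $\psi_1$-norm of the square.

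Write $L := \norm{X^2 - \E X^2}_{\psi_1}$; if $L = \infty$ there is nothing to prove, so assume $L < \infty$, and set $b = \E X$, $\sigma^2 = \Var X$, $M = \E X^2 = \sigma^2 + b^2$, $Y = X - b$. First I would collect two elementary consequences of the hypothesis together with the one-dimensional facts recalled above. By Lemma \ref{lem:psi_1_variance}, $\Var(X^2) = \E(X^2-M)^2 \le 2L^2$; scaling Lemma \ref{lem:one_dimensional_constants}(1) gives $\Var(X^2)^{1/2} \ge c_0 \Var(X)$, hence $\sigma^2 \lesssim L$. Scaling Lemma \ref{lem:one_dimensional_constants}(2) gives $\E Y^4 \lesssim \sigma^4$, so by Cauchy--Schwarz $|\E Y^3| \le (\E Y^4)^{1/2}\sigma \lesssim \sigma^3$, and Lemma \ref{lem:one_dimensional_constants}(3) gives $\norm{Y}_{\psi_1} \lesssim \sigma$.

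The key estimate is $|b|\sigma \lesssim L$. I would get it from the expansion (using $X = Y+b$ and $\E Y = 0$)
\[
  \Var(X^2) = \Var\big(Y^2 + 2bY\big) = \Var(Y^2) + 4b^2\sigma^2 + 4b\,\E Y^3 :
\]
dropping $\Var(Y^2)\ge 0$ and inserting $|\E Y^3|\lesssim\sigma^3$ and $\Var(X^2)\le 2L^2$ yields $4b^2\sigma^2 \le 2L^2 + C|b|\sigma^3$ for a universal $C$. If $|b|\ge C\sigma$ the last term is $\le b^2\sigma^2$, so $b^2\sigma^2 \lesssim L^2$; if $|b| < C\sigma$ then $|b|\sigma < C\sigma^2 \lesssim L$. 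Either way $|b|\sigma\lesssim L$. This is the one place where log-concavity is used in an essential way — through Borell's moment bound on $\E Y^3$ — and I expect it to be the main obstacle; everything else is formal.

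The rest is bookkeeping. From $X = Y+b$ one checks $Y^2 = (X^2 - M) - 2bY + \sigma^2$, so by the triangle inequality for the $\psi_1$-(Orlicz-)norm and the three displayed estimates,
\[
  \norm{Y}_{\Psi_2}^2 = \norm{Y^2}_{\psi_1} \le \norm{X^2 - M}_{\psi_1} + 2|b|\,\norm{Y}_{\psi_1} + \sigma^2\norm{1}_{\psi_1} \lesssim L + |b|\sigma + \sigma^2 \lesssim L,
\]
the first equality being the identity $\norm{Z}_{\Psi_2}^2 = \norm{Z^2}_{\psi_1}$. Finally $\rho_{LS}^2(X) = \rho_{LS}^2(Y)$ by translation invariance, and since $Y$ is centered and log-concave, Bobkov's theorem \cite{bobkov1999isoperimetric} gives $\rho_{LS}(Y) \lesssim \norm{|Y|}_{\Psi_2} = \norm{Y}_{\Psi_2}$; combining, $\rho_{LS}^2(X) \lesssim \norm{Y}_{\Psi_2}^2 \lesssim L$, as required.
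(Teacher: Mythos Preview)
Your proof is correct and follows the same high-level strategy as the paper: translate to the centered variable $Y=X-\E X$, invoke Bobkov's bound $\rho_{LS}^2(Y)\lesssim\|Y\|_{\Psi_2}^2=\|Y^2\|_{\psi_1}$, and then show $\|Y^2\|_{\psi_1}\lesssim L$ using the one-dimensional moment estimates of Lemma~\ref{lem:one_dimensional_constants}.

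The execution of the centering step is organized differently, however, and your version is arguably cleaner. The paper splits the work into two lemmas: first it shows $\|Y^2-\E Y^2\|_{\psi_1}\lesssim\inf_a\|(Y+a)^2-\E(Y+a)^2\|_{\psi_1}$ via a nested case disjunction (on the size of $\|Y^2-\E Y^2\|_{\psi_1}$, then on $a$), and afterwards deduces $\|Y^2\|_{\psi_1}\lesssim\|Y^2-\E Y^2\|_{\psi_1}$ by the triangle inequality. You instead isolate the single scalar estimate $|b|\sigma\lesssim L$ directly from the variance expansion $\Var(X^2)=\Var(Y^2)+4b^2\sigma^2+4b\,\E Y^3$ together with Borell's moment bound $|\E Y^3|\lesssim\sigma^3$, and then finish in one line via the identity $Y^2=(X^2-M)-2bY+\sigma^2$. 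The paper's route yields the slightly stronger intermediate statement that the mean minimizes $a\mapsto\|(Y+a)^2-\E(Y+a)^2\|_{\psi_1}$ up to constants, which is not needed here; your route avoids the case analysis and makes transparent that the only nontrivial input is $|b|\sigma\lesssim L$.
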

The first step is to show that the right-hand-side is minimized, up to constants, when $X$ is centered:
\begin{lem}\label{lem:centering} Let $Y$ be a centered log-concave random variable on the real line, then
$$\norm{Y^2 - \E Y^2}_{\psi_1} \lesssim \inf_{a\in\R}\norm{(Y+a)^2 - \E (Y+a)^2}_{\psi_1}.$$    
\end{lem}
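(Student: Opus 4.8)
The goal is to show that among all recentrings $Y + a$ of a centered log-concave real random variable $Y$, the choice $a = 0$ is essentially optimal for the quantity $\norm{(Y+a)^2 - \E(Y+a)^2}_{\psi_1}$. My plan is to expand the square and compare term by term. Write $Z_a = (Y+a)^2 - \E(Y+a)^2 = (Y^2 - \E Y^2) + 2a Y$, using that $\E Y = 0$. By the triangle inequality for $\norm{\cdot}_{\psi_1}$ (and the elementary fact that $\norm{Y^2 - \E Y^2}_{\psi_1} \le \norm{Z_a}_{\psi_1} + 2\abs{a}\norm{Y}_{\psi_1}$), it suffices to show that $\abs{a}\norm{Y}_{\psi_1}$ is itself controlled by $\norm{Z_a}_{\psi_1}$, i.e. that a large shift $a$ forces $\norm{Z_a}_{\psi_1}$ to be large. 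Equivalently, I want a lower bound $\norm{Z_a}_{\psi_1} \gtrsim \abs{a}\norm{Y}_{\psi_1}$, or at least $\norm{Z_a}_{\psi_1} \gtrsim \abs{a}\sqrt{\Var(Y)}$ together with a reverse comparison $\norm{Y}_{\psi_1} \lesssim \sqrt{\Var(Y)}$ (the latter is exactly part (3) of Lemma \ref{lem:one_dimensional_constants} after normalizing, i.e. Borell/Borell--Lovász). So the whole lemma reduces to: a large linear shift cannot be absorbed into the quadratic fluctuation.

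To get the lower bound on $\norm{Z_a}_{\psi_1}$, I would use that the $\psi_1$ norm dominates the $L^2$ norm, via Lemma \ref{lem:psi_1_variance}: $\norm{Z_a}_{\psi_1}^2 \gtrsim \E Z_a^2 = \Var\big((Y+a)^2\big)$. Now expand: $\Var((Y+a)^2) = \Var(Y^2 + 2aY) = \Var(Y^2) + 4a\,\Cov(Y^2, Y) + 4a^2 \Var(Y)$. The term $4a^2\Var(Y)$ is what I want; the danger is the cross term $4a\,\Cov(Y^2,Y)$, which could be negative and cancel it. Here is where log-concavity and one-dimensionality enter: by translating to the centered scale $Y$, we may normalize $\Var(Y) = 1$, and then $\abs{\Cov(Y^2,Y)} = \abs{\E Y^3} \le \big(\E Y^4\big)^{3/4} \lesssim 1$ by Borell's lemma (the same bound underlying $c_1$ in Lemma \ref{lem:one_dimensional_constants}), while also $\Var(Y^2) \ge c_0^2$ by part (1). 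A quick case analysis in $\abs{a}$ — for $\abs{a}$ large the $4a^2$ term dominates the linear-in-$a$ cross term, for $\abs{a}$ small the constant term $\Var(Y^2)$ already lower-bounds everything — gives $\Var((Y+a)^2) \gtrsim 1 + a^2 \gtrsim a^2$ (and $\gtrsim 1$), hence $\norm{Z_a}_{\psi_1} \gtrsim \abs{a}$. Since $\norm{Y}_{\psi_1} \lesssim 1 = \sqrt{\Var Y}$, this yields $\abs{a}\norm{Y}_{\psi_1} \lesssim \norm{Z_a}_{\psi_1}$, which combined with the triangle inequality above finishes the proof.

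I expect the \textbf{main obstacle} to be handling the cross term $\Cov(Y^2, Y) = \E Y^3$ cleanly: one has to be careful that this third moment does not conspire to cancel the quadratic-in-$a$ contribution for intermediate values of $\abs{a}$. The resolution is purely quantitative and relies on the three one-dimensional log-concave estimates in Lemma \ref{lem:one_dimensional_constants} — that $\Var(Y^2)$ is bounded below, that fourth moments are comparable to squared variance (Borell), and hence that $\abs{\E Y^3}$ is bounded — after reducing to the normalization $\Var(Y)=1$ by scaling. A minor secondary point is to confirm that $\norm{\cdot}_{\psi_1}$ does satisfy the triangle inequality with a universal constant (it is a norm up to constants, which is standard, e.g. via Definition \ref{def:subexponential}); one could alternatively phrase everything through the equivalent characterizations to avoid even invoking the triangle inequality, but invoking it is cleanest.
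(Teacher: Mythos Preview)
Your proposal is correct and follows essentially the same route as the paper: both expand $(Y+a)^2-\E(Y+a)^2=(Y^2-\E Y^2)+2aY$, invoke the triangle inequality for $\psi_1$, and combine the lower bound $\norm{\cdot}_{\psi_1}\gtrsim\sqrt{\Var}$ with the one-dimensional log-concave estimates of Lemma~\ref{lem:one_dimensional_constants} to handle the shift term. The only cosmetic difference is that the paper case-splits on the size of $K=\norm{Y^2-\E Y^2}_{\psi_1}$ and uses the reverse triangle inequality for standard deviations, whereas you bound the cross term $\E Y^3$ directly to get the uniform estimate $\Var((Y+a)^2)\gtrsim 1+a^2$; both arguments use the same ingredients.
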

\begin{proof}
    Let $Y$ be as in the definition. By homogeneity, we may assume that $\E Y^2 = 1$. We temporarily adopt the notation 
    $$K = \norm{Y^2 - \E Y^2}_{\psi_1}.$$
    Let $c_0, c_1, c_2$ be the three constants from Lemma \ref{lem:one_dimensional_constants}. Recall that the standard deviation is a lower bound for the $\psi_1$ norm (Lemma \ref{lem:psi_1_variance}). We distinguish two cases:
    \begin{itemize}
        \item $K \leq 8c_1 c_2$.

        Then, for any $a \in \R$, the log-concave vector $Y+a$ has unit variance. By the first item of Lemma \ref{lem:one_dimensional_constants}, 
        $$\Var((Y+a)^2)^{1/2} \geq c_0.$$
        Thus, 
        \[
        \norm{(Y+a)^2 - \E (Y+a)^2}_{\psi_1}
        \;\geq\; \Var\bigl((Y+a)^2\bigr)^{1/2}
        \;\geq\; c_0
        \;\geq\; \frac{c_0}{8\,c_1 c_2}\,K.
        \]

        \item $K \geq 8c_1 c_2$.
        
        Let $a \in \R$. Expanding the squares, we find that 
        $$\norm{(Y+a)^2 - \E (Y+a)^2}_{\psi_1} = \norm{Y^2 + 2aY - 1}_{\psi_1}.$$
        We compute:
        \begin{align*}
            \Var(Y^2 + 2aY - 1)
            &= \Var(Y^2 + 2aY) \\
            &\geq \left(\Var(2aY)^{1/2} - \Var(Y^2)^{1/2}\right)^2.
        \end{align*}
        Now, since $Y$ is a centered log-concave random variable with unit variance,
        $$c_0 \leq \Var(Y^2)^{1/2} \leq c_1.$$
        Thus, we get 
        \begin{align}
        \norm{Y^2 + 2aY - 1}_{\psi_1}
        &\geq \frac{1}{2}\Var(Y^2 + 2aY - 1)^{1/2} \nonumber\\
        &\geq \frac{1}{2}\left(\Var(2aY)^{1/2} - \Var(Y^2)^{1/2}\right) \nonumber\\
        &\geq \frac{1}{2}\left(2a - c_1\right). \label{eq:bound_a_big}
        \end{align}
        We again make a case disjunction.
        \begin{itemize}
            \item If $a \geq \frac{K}{4c_2} + \frac{c_1}{2}$, we get from \eqref{eq:bound_a_big} that 
            $$\norm{Y^2 + 2aY - 1}_{\psi_1} \geq \frac{K}{4c_2}.$$

            \item If $a \leq \frac{K}{4c_2} + \frac{c_1}{2}$,
            we simply use the triangle inequality:
            \begin{align*}
                \norm{Y^2 + 2aY - 1}_{\psi_1}
                &\geq \norm{Y^2 - 1}_{\psi_1} - \norm{2aY}_{\psi_1} \\
                &= K - 2a \norm{Y}_{\psi_1} \\
                &\geq K - 2a c_2 \\
                &\geq K/2 - c_1 c_2 \\
                &\geq K/4.
            \end{align*}
        \end{itemize}
        In the end, we obtain 
        $$\norm{(Y+a)^2 - \E (Y+a)^2}_{\psi_1} \geq \frac{K}{C},$$
        with $C = \max(4, 4c_2, \frac{c_0}{16c_1 c_2})$, which is the desired result.
    \end{itemize}
\end{proof}
Now we lower bound the quantity $\norm{Y^2 - \E Y^2}_{\psi_1}$ when $Y$ is one dimensional and centered.
\begin{lem}\label{lem:after_centering} Let $Y$ be a centered log-concave random variable on the real line, then
$$\norm{Y^2}_{\psi_1} \lesssim \norm{Y^2 - \E Y^2}_{\psi_1}.$$   
\end{lem}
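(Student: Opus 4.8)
The plan is to combine the triangle inequality for the $\psi_1$-norm with the one-dimensional variance lower bound already recorded in Lemma~\ref{lem:one_dimensional_constants}. Set $m=\E Y^2=\Var(Y)$ and $K=\norm{Y^2-\E Y^2}_{\psi_1}$. We may assume $m>0$, since otherwise $Y\equiv 0$ and both sides vanish. Recalling that $\norm{\cdot}_{\psi_1}$ is the Luxemburg norm associated with the convex Orlicz function $x\mapsto e^{x}-1$ (hence genuinely a norm), and that $\norm{1}_{\psi_1}=1/\log 2$, the triangle inequality gives
$$\norm{Y^2}_{\psi_1}\ \leq\ \norm{Y^2-m}_{\psi_1}+m\,\norm{1}_{\psi_1}\ =\ K+\frac{m}{\log 2}.$$

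It therefore suffices to bound $m$ by a universal multiple of $K$. On one side, Lemma~\ref{lem:psi_1_variance} applied to $Y^2-m$ yields $\Var(Y^2)=\Var(Y^2-m)\leq 2K^2$. On the other side, the variable $Y/\sqrt m$ is centered, log-concave and isotropic, so part~(1) of Lemma~\ref{lem:one_dimensional_constants} gives $\Var\!\big((Y/\sqrt m)^2\big)^{1/2}\geq c_0$, i.e.\ $\Var(Y^2)^{1/2}\geq c_0 m$. Combining the two bounds gives $c_0 m\leq \sqrt 2\,K$, hence $m\leq \sqrt 2\,K/c_0$.

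Plugging this back into the first display yields
$$\norm{Y^2}_{\psi_1}\ \leq\ K\left(1+\frac{\sqrt 2}{c_0\log 2}\right)\ \lesssim\ K,$$
which is the claimed inequality. There is essentially no obstacle here: the content is entirely carried by the variance lower bound $\Var(Y^2)\gtrsim m^2$, which is exactly what prevents $Y^2$ from looking like a large deterministic constant $m$ (for which $Y^2-\E Y^2$ would be small while $Y^2$ is large). The only minor point to keep in mind is that the argument uses that $\norm{\cdot}_{\psi_1}$ satisfies the triangle inequality and is positively homogeneous, together with the elementary evaluation $\norm{1}_{\psi_1}=1/\log 2$.
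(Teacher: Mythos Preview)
Your proof is correct and follows essentially the same route as the paper: triangle inequality for the $\psi_1$-norm, then the bound $m=\Var(Y)\lesssim\Var(Y^2)^{1/2}\lesssim K$ obtained by combining Lemma~\ref{lem:one_dimensional_constants}(1) (after rescaling $Y$ to unit variance) with Lemma~\ref{lem:psi_1_variance}. The only difference is that you track explicit constants where the paper uses $\lesssim$ throughout.
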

\begin{proof}
    By the triangle inequality,
    \begin{align*}
        \norm{Y^2}_{\psi_1} &\leq \norm{Y^2 - \E Y^2}_{\psi_1} + \E Y^2 = \norm{Y^2 - \E Y^2}_{\psi_1} + \Var(Y)
    \end{align*}
    Now, applying Lemma \ref{lem:one_dimensional_constants} one more time,
    \begin{align*}
        \Var(Y) &\lesssim \Var(Y^2)^{1/2} \leq \norm{Y^2 - \E Y^2}_{\psi_1},
    \end{align*}which concludes the proof.
\end{proof}

Now we are in position to prove Lemma \ref{lem:1D_case}. Let $X$ be a log-concave real random variable, and let $Y = X-\E X$. Recall Bobkov's result :
$$\rho_{LS}(X) = \rho_{LS}(Y) \lesssim \norm{Y^2}_{\psi_1}.$$
Combining Lemmas \ref{lem:after_centering} and \ref{lem:centering}, we get
\begin{align*}
    \rho_{LS}(X) & \lesssim \norm{Y^2}_{\psi_1} \lesssim \norm{Y^2 - \E Y^2}_{\psi_1} \lesssim \norm{X^2 -\E X^2}_{\psi_1},
\end{align*}
which is what we wanted to prove.
\subsection{A localization argument}
We will need the following geometric version of the localization lemma:
\begin{lem}\label{lem:logsob_geometric_localization_lemma}
Let $\mu$ be a log-concave probability measure on $\R^n$, and let $S$ be any measurable set. 
Then there exists a probability space $(\Omega,\nu)$ and a disintegration 
\[
\mu = \int_{\Omega} \mu_\omega\, d\nu(\omega),
\]
such that $\nu$-almost surely:
\begin{itemize}
    \item $\mu_\omega$ is a one-dimensional log-concave probability measure;
    \item $\mu_\omega(S) = \mu(S)$.
\end{itemize}
\end{lem}
The idea of localization originates in the work of Payne and Weinberger~\cite{payne1960optimal}. Lemma \ref{lem:logsob_geometric_localization_lemma} is in some sense the limiting object of the bisection procedure of \cite{payne1960optimal}.
The formulation of the localization lemma as a disintegration of measure already appears in Gromov-Milman~\cite{gromov1987generalization} in a spherical context. 
The localization method was subsequently developed and applied extensively in convex geometry, but usually stated in a functional form, that is not suited for our purposes
(see, e.g.,~\cite{lovasz1993random,kannan1995isoperimetric}). 
The precise statement of Lemma~\ref{lem:logsob_geometric_localization_lemma} first appears, to the best of our knowledge, 
in Klartag's work~\cite{klartag2017needle}, which also provides additional structural information on the “needles”~$\mu_\omega$ 
that will not be needed here. It is possible to give another proof of this lemma via a version of the stochastic localization process. See for example \cite{eldan2022analysis} for the Gaussian case.
In what follows we will use the compact notation 
\[
\mu = \E_\omega[\mu_\omega]
\]
to denote this disintegration, omitting explicit reference to $\nu$, as it plays no role for us.

For a measure $\mu$ on $\R^n$, we define
\[
\frac{1}{k_\mu}
:= \inf_{\mu(S)\le 1/2}
\frac{\mu^+(S)}{\mu(S)\sqrt{\log\!\frac{1}{\mu(S)}}}
= \inf_{\mu(S)\le 1/2}
\frac{\mu^+(S)}{\mathcal{E}_\mu(S)}\,,
\]
where the infimum runs over all measurable sets $S$ of measure less than $1/2$ and
\[
\mu^+(S) := \liminf_{r\to 0^+}\frac{\mu(S_r)-\mu(S)}{r},
\qquad S_r = \{x\in\R^n:\ \mathrm{d}(x,S)\le r\}.
\]
Ledoux \cite{ledoux1994simple} showed that for all log-concave measure $\mu$,
$$k_\mu \simeq \rho_{LS}(\mu).$$
Now, we fix a set $S$ and a disintegration $\mu=\E_\omega[\mu_\omega]$ given by the localization lemma. We may write
\begin{align*}
    \mu^+(S) &= \E_\omega[\mu_\omega^+(S)]\\
             &\geq \E_\omega\!\left[\frac{1}{k_{\mu_\omega}}\,\mathcal{E}_{\mu_\omega}(S)\right] \\
             &= \E_\omega\!\left[\frac{1}{k_{\mu_\omega}}\right] \mathcal{E}_{\mu}(S).
\end{align*}
Thus, we need to estimate $\frac{1}{k_{\mu_\omega}}$. 
Let $a \ge 0$ and denote $K_a = \left\lVert |X|^2 - a \right\rVert_{\psi_1}$, where $X \sim \mu$. 
Write $X_\omega$ for a random vector with law $\mu_\omega$. We have
\[
2 \;\ge\; \E e^{(|X|^2 - a)/K_a}
   \;=\; \E_\omega\!\left[\E e^{(|X_\omega|^2 - a)/K_a}\right].
\]
By Markov's inequality, with probability at least $1/2$,
\[
\E e^{(|X_\omega|^2 - a)/K_a} \le 4.
\]
We work on this event, denoted by $\mathcal{U}$. Since each $X_\omega$ is one-dimensional, we may write $X_\omega = b_\omega + \xi_\omega \theta_\omega$, where $\xi_\omega$ is log-concave, $|\theta_\omega| = 1$, and $b_\omega \perp \theta_\omega$. Thus,
\[
\E e^{(|X_\omega|^2 - a)/K_a} = \E e^{(|b_\omega|^2 + \xi_\omega^2 - a)/K_a} \le 4.
\]
That is, we have shown that
\[
\left\lVert \xi_\omega^2 - (a - |b_\omega|^2) \right\rVert_{\psi_1} \le 2K_a.
\]
In particular,
\begin{equation}\label{eq:xi_omega_psi1}
    \left\lVert \xi_\omega^2 - \E \xi_\omega^2 \right\rVert_{\psi_1} \lesssim K_a,
\end{equation}
since the mean minimizes the $\psi_1$ norm up to a universal constant. Using Lemma~\ref{lem:1D_case}, we may rewrite~\eqref{eq:xi_omega_psi1} as
\begin{equation}\label{eq:k_xi_omega}
    \frac{1}{k_{\mu_\omega}} \gtrsim \frac{1}{K_a}.
\end{equation}
Finally, putting everything together, we obtain
\begin{align*}
    \mu^+(S)
    &\ge \E_\omega\!\left[\frac{1}{k_{\mu_\omega}}\right] \mathcal{E}_{\mu}(S)\\
    &\ge \E_\omega\!\left[\frac{1}{k_{\mu_\omega}}\,1_{\mathcal{U}}\right] \mathcal{E}_{\mu}(S)\\
    &\gtrsim \frac{1}{K_a}\,\mathcal{E}_{\mu}(S).
\end{align*}
Since $S$ is arbitrary, we conclude that
\[
k_\mu \lesssim K_a = \left\lVert |X|^2 - a \right\rVert_{\psi_1}.
\]
Taking $a = \E |X|^2$ completes the proof.
\subsection{Proof of Corollary \ref{cor_secondproof}}
To conclude this section, we explain how Theorem~\ref{thm:LOCALISATIONLOGSOB} allows us to recover 
Theorem~\ref{thm:MAINLOGSOB} up to a logarithmic factor, by proving Corollary~\ref{cor_secondproof}. Let $X$ be a $1$-subgaussian log-concave random vector in $\R^n$. In view of Theorem~\ref{thm:LOCALISATIONLOGSOB}, Corollary~\ref{cor_secondproof} will follow if we show that  
\begin{equation}\label{eq849}
    \bigl\lVert\,|X|^2- \E\abs{X}^2\,\bigr\rVert_{\psi_1} \lesssim \sqrt{n\log n}.
\end{equation}

\paragraph{Step 1}
We begin with a preprocessing of $X$. Since $X$ is $1$-subgaussian, we have $\Cov(X)\preceq(\log 2)\,I_n$. Let $G$ be a Gaussian vector, independent of $X$, with covariance $\Sigma = I_n - \Cov(X)$, and set $\widetilde X = X+G$. Then $\widetilde X$ is isotropic, log-concave, and $2$-subgaussian. Moreover, by the triangle inequality,
\[
\bigl\|\,|X|^2-\E|X|^2\,\bigr\|_{\psi_1}
\;\leq\;
\bigl\|\,|\widetilde X|^2-\E|\widetilde X|^2\,\bigr\|_{\psi_1}
+ \bigl\|\,|G|^2-\E|G|^2\,\bigr\|_{\psi_1}
+ 2\bigl\|\,X\!\cdot\!G\bigr\|_{\psi_1}.
\]

Since $\Sigma\preceq I_n$, one easily checks that 
\[
\bigl\|\,|G|^2-\E|G|^2\,\bigr\|_{\psi_1}\lesssim\sqrt{n}.
\]
For instance, arguing as in \eqref{eq243}
\[
\bigl\lVert\,|G|^2- \E\abs{G}^2\,\bigr\rVert_{\psi_1}
    \;\lesssim\;
    \bigl\|\,|G| - \E|G|\,\bigr\|_{\psi_2}^2
    \;+\;
    \E|G| \,\cdot\,
    \bigl\|\,|G| - \E|G|\,\bigr\|_{\psi_1},
\]
together with $c_P(G)\leq \rho_{LS}(G)\leq 1$ or compute directly, which is elementary.  
On the other hand, letting $K=\|X\|_{\psi_2}$,
\[
\E e^{\frac{X\cdot G}{K}}
= \E e^{\frac{|X|^2}{K^2}}
\leq 2,
\]
and Proposition~\ref{prop:norm_subgauss} yields $\|X\!\cdot\!G\|_{\psi_1} \leq \||X|\|_{\psi_2} \lesssim \sqrt{n}$.

We conclude that 
\begin{equation}\label{eq874}
\bigl\|\,|X|^2-\E|X|^2\,\bigr\|_{\psi_1}
\;\leq\;
\bigl\|\,|\widetilde X|^2-\E|\widetilde X|^2\,\bigr\|_{\psi_1}
+ O(\sqrt{n}).
\end{equation}
Thus, in order to prove \eqref{eq849}, it is enough to bound $\bigl\|\,|\widetilde X|^2-\E|\widetilde X|^2\,\bigr\|_{\psi_1}$ when $\widetilde X$ is isotropic and $2$-subgaussian.

\paragraph{Step 2}
Using the triangle inequality, as in \eqref{eq243}, we decompose
\[
\bigl\lVert\,|\widetilde X|^2- \E\abs{\widetilde X}^2\,\bigr\rVert_{\psi_1}
    \;\lesssim\;
    \bigl\|\,|\widetilde X| - \E|\widetilde X|\,\bigr\|_{\psi_2}^2
    \;+\;
    \E|\widetilde X| \,\cdot\,
    \bigl\|\,|\widetilde X| - \E|\widetilde X|\,\bigr\|_{\psi_1}.
\]
We bound the two terms appearing on the right-hand side separately. For the first term, using Proposition~\ref{prop:norm_subgauss}, for $t\geq C\sqrt{n}$,
\[
\PP\bigl(|\widetilde X| - \E|\widetilde X| \geq t\bigr) 
\leq \PP\bigl(|\widetilde X|\geq t\bigr) 
\leq \exp(-ct^2),
\]
where $C>1$ and $c>0$ are universal constants, and where we used that $\E|\widetilde X|\leq \sqrt{n}$ since $\widetilde X$ is isotropic. For $t\leq C\sqrt{n}$, we use a deviation result of Guédon and Milman~\cite[Theorem~1.1]{guedon2011interpolating}, which implies that 
\[
\PP\bigl(|\widetilde X| - \E|\widetilde X| \geq t\bigr) \leq C'\exp(-c' t^2/\sqrt{n})
\]
for some universal constants $C',c'>0$. Putting both bounds together, we arrive at
\begin{equation}\label{eq_tojustify}
        \bigl\|\,|\widetilde X| - \E|\widetilde X|\,\bigr\|_{\psi_2}^2
    \;\lesssim\;
    \sqrt{n}.
\end{equation}

For the second term, we need to control
$\bigl\|\,|\widetilde X| - \E|\widetilde X|\,\bigr\|_{\psi_1}$.
Since the Euclidean norm is $1$-Lipschitz, it follows that
\[
    \bigl\|\,|\widetilde X| - \E|\widetilde X|\,\bigr\|_{\psi_1}
    \;\lesssim\;
    C_P(\widetilde X)
    \;\lesssim\;
    \sqrt{\log n},
\]
where in the last inequality we used the best known bound on the KLS constant 
due to Klartag~\cite{klartag2023logarithmic}. We conclude that 
\[
\bigl\|\,|\widetilde X|^2-\E|\widetilde X|^2\,\bigr\|_{\psi_1}\lesssim\sqrt{n\log n}.
\]
Thus we have proved Corollary~\ref{cor_secondproof}, as 
\[
\rho_{LS}^2(X)\lesssim\bigl\lVert\,|X|^2- \E\abs{X}^2\,\bigr\rVert_{\psi_1} 
\lesssim \bigl\lVert\,|\widetilde X|^2- \E\abs{\widetilde X}^2\,\bigr\rVert_{\psi_1} + O(\sqrt{n}) 
\lesssim\sqrt{n\log n}.
\]

The logarithmic factor would disappear if one could show that
\[
    \bigl\|\,|\widetilde X| - \E|\widetilde X|\,\bigr\|_{\psi_1} = O(1)
\]
for every isotropic log-concave random vector $\widetilde X$. 
This conjectured concentration estimate lies between the KLS conjecture and the thin-shell conjecture. 
The latter was recently resolved by Klartag and Lehec~\cite{klartag2025thin}, 
following a breakthrough of Guan~\cite{guan2024note}, 
and asserts that $\Var(|\widetilde X|) = O(1)$ for any isotropic log-concave vector $\widetilde X$. 
Furthermore, using reverse Hölder inequalities for polynomials, the authors of 
\cite{klartag2025thin} show that one can obtain a dimension-free bound 
on the $\psi_{1/2}$-norm of $|\widetilde X| - \E|\widetilde X|$. 
However, this argument does not yield a corresponding bound for the $\psi_1$-norm.

\section{An approach via stochastic localization}\label{sec:loc_sto}
In this section we describe a general strategy to estimate the log-Sobolev constant of a log-concave probability measure using stochastic localization. We briefly recall the definition and basic properties of the process in the Lee-Vempala formulation \cite{lee2017eldan}, we refer to the lecture notes \cite{klartag2024isoperimetric} for a detailed exposition.

Let $\mu$ be a log-concave probability measure on $\R^n$ with density $f$. 
For $t \ge 0$ and $h \in \R^n$, we define a probability measure $\mu_{t,h}$ by
\begin{equation}\label{eq:def_mu_t_h}
    \frac{d\mu_{t,h}(x)}{d\mu(x)} 
    = \frac{1}{Z_{t,h}}\, e^{-t|x|^2 + h\cdot x}, 
    \qquad x \in \R^n,
\end{equation}
where $Z_{t,h}$ a the normalizing constant. 
We denote its density by $f_{t,h}$ and set
\[
a_{t,h} := \int_{\R^n} x \, d\mu_{t,h}(x)
\qquad\text{and}\qquad
A_{t,h} := \int_{\R^n} (x - a_{t,h})(x - a_{t,h})^{\!T} \, d\mu_{t,h}(x),
\]
for the barycenter and covariance matrix of $\mu_{t,h}$. Consider the stochastic differential equation:
\begin{equation}\label{eq:dh_t}
    h_0=0 \quad dh_t = a_{t,h_t}dt + dB_t
\end{equation}
where $(B_t)_{t\geq0}$ is a standard Brownian motion. The stochastic localization of $\mu$ is the measure-valued process $(\mu_{t,h_t})_{t\geq0}$, which by a slight abuse of notations, we hereby denote by $(\mu_t)_{t\geq0}$. Accordingly, we denote by $f_t, a_t$ and $A_t$ the density, barycenter, and covariance matrix of $\mu_t$. The following lemma is classical, and is an alternate definition of the process.
\begin{lem} For any $x\in\R^n$, 
\begin{equation}\label{eq873}
    df_t(x) = (x-a_t)f_t(x)\cdot dB_t
\end{equation}
\end{lem}
As an immediate consequence, we obtain 
\begin{lem}\label{lem:martingale}
    For any test function $\varphi$, the process $\left(\int_{\R^n} \varphi d\mu_t\right)_{t \geq0} $ is a martingale.
\end{lem}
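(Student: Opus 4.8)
The plan is to differentiate the quantity $M_t^\varphi := \int_{\R^n}\varphi\,d\mu_t = \int_{\R^n}\varphi(x)f_t(x)\,dx$ using the stochastic dynamics of the densities $f_t$ recorded in the preceding lemma, and to observe that the resulting Itô differential has no $dt$-component. Concretely, I would first treat a bounded measurable $\varphi$ and write, at least formally,
$$ dM_t^\varphi \;=\; \int_{\R^n}\varphi(x)\,df_t(x)\,dx \;=\; \left(\int_{\R^n}\varphi(x)\,(x-a_t)\,f_t(x)\,dx\right)\cdot dB_t \;=\; v_t^\varphi\cdot dB_t, $$
with $v_t^\varphi := \int_{\R^n}(x-a_t)\varphi(x)f_t(x)\,dx\in\R^n$. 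Since the drift vanishes, $M_t^\varphi$ is (at least) a local martingale, with bracket $d\langle M^\varphi\rangle_t = \abs{v_t^\varphi}^2\,dt$.

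The step that needs care, and which I would carry out next, is the interchange of $\int_{\R^n}dx$ with the stochastic differential: this is an application of a stochastic Fubini theorem, legitimate provided an a priori bound such as $\E\int_0^T\!\!\int_{\R^n}\abs{\varphi(x)}\,\abs{x-a_t}\,f_t(x)\,dx\,dt<\infty$ holds. For bounded $\varphi$ this reduces to controlling $\E\int_{\R^n}\abs{x-a_t}\,f_t(x)\,dx$, i.e.\ the first absolute moment of $\mu_t$. Since $\mu_t$ remains log-concave along the process and one has control of $a_t$ and of $A_t=\Cov(\mu_t)$ through the defining SDE \eqref{eq:dh_t}, these moments are finite and boundedly so on $[0,T]$; hence the Fubini interchange is valid, $v_t^\varphi$ is square-integrable on $[0,T]$, and $M^\varphi$ is a genuine martingale, not merely a local one. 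The case of a general $\varphi\in L^1(\mu_t)$ (e.g.\ a polynomially growing $\varphi$, which is all that gets used in the sequel) then follows by a routine truncation argument, approximating $\varphi$ by $\varphi\ind_{\{\abs{x}\le R\}}$ and passing to the limit using the uniform exponential tail decay of log-concave measures.

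I expect the main obstacle to be precisely this integrability bookkeeping — justifying the stochastic Fubini step and excluding the local-martingale-but-not-true-martingale pathology — since the probabilistic heart of the argument is the one-line Itô computation above, which makes the absence of drift manifest. An alternative, essentially equivalent route would be to verify the martingale identity directly at the level of $\E[\,f_{t+s}(x)\mid\mathcal F_t\,]=f_t(x)$ from the dynamics of $f_t$, but the integration-against-$\varphi$ formulation above is the cleanest.
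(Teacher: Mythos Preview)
Your proposal is correct and follows exactly the approach the paper intends: the paper states the lemma as ``an immediate consequence'' of the preceding identity $df_t(x)=(x-a_t)f_t(x)\cdot dB_t$, and your one-line It\^o computation $dM_t^\varphi = \left(\int \varphi(x)(x-a_t)f_t(x)\,dx\right)\cdot dB_t$ is precisely that consequence. The integrability bookkeeping you add (stochastic Fubini, ruling out the local-martingale pathology, truncation for unbounded $\varphi$) is more careful than anything the paper spells out, but it is the natural way to make the ``immediate'' rigorous and does not depart from the intended route.
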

Since $\mu$ is log-concave, it is clear from \eqref{eq:def_mu_t_h} that the random probability measure $\mu_t = \mu_{t,h_t}$ is $t$-strongly log-concave. Thus, by the Bakry-Emery criterion, we have the deterministic bound
\begin{equation}\label{eq852}
    \rho_{LS}(\mu_t)\leq \frac{1}{t}.
\end{equation}
Thus, our goal is to relate $\rho_{LS}(\mu)$ to $\rho_{LS}(\mu_T)$ for a sufficiently large $T>0$. Let $g$ be a smooth function, and let $M_t = \int g^2d\mu_t$. By Lemma \ref{lem:martingale}, $(M_t)_{t\geq0}$ is a martingale. We obtain the following decomposition of entropy.
\begin{lem}\label{lem:entropy_decomposition}
For all $T>0$,
\begin{align}
    \Ent_\mu(g^2)
    &= \E\Ent_{\mu_T}(g^2) + \Ent(M_T)\nonumber\\
    &= \E\Ent_{\mu_T}(g^2) + \E\int_0^T \frac{d[M]_t}{2M_t}\nonumber\\
    &\le \frac{2}{T}\E_\mu(|\nabla g|^2) + \E\int_0^T \frac{d[M]_t}{2M_t}.\label{eq:entropy_decomposition}
\end{align}
\end{lem}

\begin{proof}
The first identity follows from the martingale property (Lemma~\ref{lem:martingale}),
the second from straightforward Itô calculus.
Finally, the last inequality uses \eqref{eq852} together with the martingale property once again.
\end{proof}

In view of \eqref{eq:entropy_decomposition}, we aim to bound the term
$\E\int_0^T \frac{d[M]_t}{2M_t}$.
Since one cannot expect a log-Sobolev inequality to hold for an arbitrary log-concave measure~$\mu$,
the bound should involve, in some way, the subgaussian parameter~$\sigma_{SG}^2(\mu)$.
For cosmetic reasons, we introduce an equivalent quantity that will be more convenient for us:
for a measure~$\nu$ with barycenter~$b$, define
\[
\Tilde{\sigma}(\nu)
= \inf\bigl\{K>0 : \forall u\in\R^n,\ 
\E_{X\sim\nu} e^{(X-b)\cdot u} \le e^{u^2K^2/2}\bigr\}.
\]
By the “Moreover” part of Proposition~\ref{prop:subgaussian}, we have
$\Tilde{\sigma} \simeq \sigma_{SG}$.
In the following, we write $\Tilde{\sigma_t}=\Tilde{\sigma}(\mu_t)$.
\begin{lem}\label{lem:dM_t_control}
\[
\frac{d[M]_t}{2M_t} \le \Tilde{\sigma_t}^2\,\Ent_{\mu_t}(g^2).
\]
\end{lem}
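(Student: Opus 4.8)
The plan is to compute the quadratic variation $d[M]_t$ explicitly using the defining SDE $df_t(x) = (x-a_t)f_t(x)\cdot dB_t$, and then to recognize the resulting quantity as a variance that can be controlled by the sub-gaussian constant $\Tilde{\sigma_t}$ together with the entropy $\Ent_{\mu_t}(g^2)$. Since $M_t = \int g^2 \, d\mu_t$, differentiating under the integral sign gives $dM_t = \int g^2(x)(x-a_t)f_t(x)\,dx \cdot dB_t = \left(\int g^2(x)(x-a_t)\,d\mu_t(x)\right)\cdot dB_t$, so that, writing $v_t := \int g^2(x)(x-a_t)\,d\mu_t(x) \in \R^n$, we have $d[M]_t = \abs{v_t}^2\,dt$. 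The key observation is that $v_t$ is (a vector of) covariances of $g^2$ against the coordinates of $X-a_t$ under $\mu_t$.

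First I would rewrite $\frac{d[M]_t}{2M_t} = \frac{\abs{v_t}^2}{2M_t}$ and bound $\abs{v_t}$ by maximizing over directions: $\abs{v_t} = \sup_{\theta \in \mathbb{S}^{n-1}} v_t\cdot\theta = \sup_\theta \int g^2(x)\,(x-a_t)\cdot\theta \, d\mu_t(x) = \sup_\theta \Cov_{\mu_t}\!\big(g^2, (X-a_t)\cdot\theta\big)$, the last equality because $\int (x-a_t)\cdot\theta\,d\mu_t = 0$. Now I would invoke a standard entropy–variance estimate: for any probability $\nu$, any $\phi \ge 0$ and any function $\psi$ with $\Tilde{\sigma}(\nu)$ controlling the Laplace transform of $\psi - \E_\nu\psi$ (or, equivalently, $\psi$ being sub-gaussian under $\nu$), one has a bound of the shape $\Cov_\nu(\phi,\psi) \lesssim \Tilde{\sigma}(\nu)\sqrt{\E_\nu[\phi]\,\Ent_\nu(\phi)}$. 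Applying this with $\nu = \mu_t$, $\phi = g^2$, $\psi = (X-a_t)\cdot\theta$ gives $\abs{v_t} \le \sqrt{2}\,\Tilde{\sigma_t}\sqrt{M_t\,\Ent_{\mu_t}(g^2)}$ (the constant $\sqrt2$ to be tracked), and substituting back yields $\frac{d[M]_t}{2M_t} = \frac{\abs{v_t}^2}{2M_t} \le \Tilde{\sigma_t}^2\,\Ent_{\mu_t}(g^2)$, as claimed.

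The main obstacle is establishing the entropy–variance inequality $\Cov_\nu(\phi,\psi) \le \sqrt{2}\,\Tilde{\sigma}(\nu)\sqrt{\E_\nu[\phi]\,\Ent_\nu(\phi)}$ with the sharp constant needed to land exactly at $\Tilde{\sigma_t}^2$ with no loss. The clean route is the entropy variational principle: for any $s \in \R$, $\E_\nu[\phi\,(s\psi_0)] \le \E_\nu[\phi]\log\E_\nu e^{s\psi_0} + \Ent_\nu(\phi)$ where $\psi_0 = \psi - \E_\nu\psi$; using the sub-gaussian bound $\log\E_\nu e^{s\psi_0} \le \tfrac12 s^2\Tilde{\sigma}(\nu)^2$ gives $s\,\Cov_\nu(\phi,\psi) \le \tfrac12 s^2\,\E_\nu[\phi]\,\Tilde{\sigma}(\nu)^2 + \Ent_\nu(\phi)$, and optimizing over $s$ (taking $s = \sqrt{2\Ent_\nu(\phi)/(\E_\nu[\phi]\Tilde{\sigma}(\nu)^2)}$) produces exactly $\Cov_\nu(\phi,\psi)^2 \le 2\,\E_\nu[\phi]\,\Tilde{\sigma}(\nu)^2\,\Ent_\nu(\phi)$. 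Feeding this into $\abs{v_t}^2 \le \sup_\theta \Cov_{\mu_t}(g^2,\psi_\theta)^2$ and dividing by $2M_t = 2\E_{\mu_t}[g^2]$ gives precisely the stated bound; one should double-check that $\sup_\theta$ and the supremum over $s$ interact correctly (they do, since the bound holds for each fixed $\theta$ and $M_t$, $\Ent_{\mu_t}(g^2)$ do not depend on $\theta$), and that the Itô differentiation step is justified — integrability here follows from $g$ locally Lipschitz and $\mu_t$ having a Gaussian factor $e^{-t\abs{x}^2}$ for $t > 0$, which makes all moments finite.
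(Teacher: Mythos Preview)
Your proof is correct and follows essentially the same route as the paper: compute $d[M]_t$ via It\^o, apply the Gibbs variational principle to bound $\Cov_{\mu_t}(g^2,(X-a_t)\cdot\theta)$ by entropy plus the log-Laplace transform, use the definition of $\Tilde{\sigma_t}$, and optimize over the free scalar. Your optimization in $s$ is in fact slightly cleaner than the paper's, which squares first, applies $(a+b)^2\leq 2(a^2+b^2)$, and only then optimizes over an auxiliary parameter $\lambda$---both routes land on the same sharp constant.
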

\begin{proof}
    Let $t\geq0$, using \eqref{eq873} we first compute
    \begin{align*}
        dM_t = d\int g^2d\mu_t = \left(\int g^2(x-a_t)\mu_t\right)\cdot dB_t.
    \end{align*}
    Let $\lambda>0$ a parameter to be determined later. From the previous computation,
    \begin{align*}
        d[M]_t &= \abs{\int g^2(x-a_t)\mu_t}^2\\
        &= \sup_{\theta \in \mathcal{S}^{n-1}}\left(\int g^2(x-a_t)\cdot\theta\mu_t\right)^2\\
        &= \lambda M_t^2 \sup_{\theta \in \mathcal{S}^{n-1}}\left(\int \frac{g^2}{M_t}\frac{(x-a_t)\cdot\theta}{\sqrt{\lambda}}\mu_t\right)\\
        &\leq \lambda M_t^2 \sup_{\theta \in \mathcal{S}^{n-1}}\left(\Ent_{\mu_t}\left(\frac{g^2}{M_t}\right) + \log \E_{\mu_t}e^{\frac{(x-a_t)\cdot\theta}{\sqrt{\lambda}}}\right)^2\\
        &\leq \lambda M_t^2\left(\frac{1}{M_t}\Ent_{\mu_t}\left(g^2\right) + \frac{\Tilde{\sigma_t}^2}{2\lambda}\right)^2\\
        &\leq 2\left(\lambda\Ent_{\mu_t}^2(g^2) + \frac{\Tilde{\sigma_t}^4M_t^2}{4\lambda}\right)
    \end{align*}
    where in the first inequality we used Gibb's variational principle and in the last, we used that $(a+b)^2\leq 2(a^2 + b^2)$ for reals $a,b$. Finally, we get that for any $\lambda>0$, 
    $$\frac{d[M]_t}{2M_t} \leq \frac{\lambda\Ent_{\mu_t}^2(g^2)}{M_t} + \frac{\Tilde{\sigma_t}^4M_t}{4\lambda}.$$
    Choosing the optimal $\lambda = \frac{\Tilde{\sigma_t}^2M_t}{2\Ent_{\mu_t}(g)}$ concludes the proof.
\end{proof}
We deduce that a deterministic bound on $\Tilde{\sigma_t}$ yields a bound on $\rho_{LS}(\mu)$.
\begin{lem}\label{lem940}
    Let $\mu$ be a log-concave probability measure. Assume that for all $t\geq0$, the bound $\sigma_t\leq\beta$ holds almost surely, then $\rho_{LS}(\mu) \leq 2\beta$.
\end{lem}
\begin{proof}
    By plugging the bound on $\sigma_t$ into Lemma \ref{lem:dM_t_control} and Lemma \ref{lem:entropy_decomposition} we get, for an arbitrary smooth function $g$, and $T>0$:
    \begin{align*}
        \Ent_{\mu}(g^2) &\leq \frac{2}{T}\E_\mu(\abs{\nabla g}^2) + \beta^2 \int_0^T \E \Ent_{\mu_t}(g^2) dt\\
        &\leq \frac{2}{T}\E_\mu(\abs{\nabla g}^2) + \beta^2T \Ent_{\mu}(g^2)\\
    \end{align*}
    Choosing $T=\frac{1}{2\beta^2}$ yields 
    $$\Ent_\mu(g^2) \leq 8\beta^2\E_\mu(\abs{\nabla g}^2).$$
    Equivalently, $\rho_{LS}(\mu)^2\leq 4\beta^2$, which is the desired result.
\end{proof}

\begin{rmk}
    Since $\mu_t$ is $t$-strongly log-concave we always have the deterministic bound $\Tilde{\sigma_t}\leq \rho_{LS}(\mu_t)\leq\frac{1}{t}$ almost surely. Thus the hypothesis of Lemma \ref{lem940} is essentially about small times $t\geq0$.
\end{rmk}

We are now in position to bound the log-Sobolev constant of strongly tilt stable log-concave probability measures as claimed in the introduction. It is a direct application of the previous lemma.
\begin{proof}[Proof of Lemma \ref{lem:intro_strong_tilt_implies_logsob}]
    Let $\mu$ be a $\beta$-strongly tilt stable log-concave probability. That is
    \begin{equation}\label{eq_601}
    \sup_{t>0, \ h \in\R^n}\ \norm{\Cov(\frac{1}{Z_{h,t}}\mu e^{-t\abs{x}^2 + h\cdot x})}_{op} \leq \beta^2.
    \end{equation}
    Fix $t_0>0$ and let $h_0\in\R^n$. Equation \eqref{eq_601} implies that $\mu_{t_0,h_0}$ is $\beta$-strongly tilt-stable too. In particular, it is $\beta$-tilt stable. From the proof of Lemma \ref{lem315}, we know that tilt-stability implies a quadratic bound on the log-Laplace \eqref{eq320}. Namely, $\Tilde{\sigma}(\mu_{t,h_0}) \leq \beta$. By letting $t_0$ and $h_0$ take arbitrary values, we see that
    $$\Tilde{\sigma_t} \leq \beta \quad \text{a.s.}$$
    It follows that $\rho_{LS}(\mu)\leq 2\beta$ by Lemma \ref{lem940}.
\end{proof}
\medskip

In the rest of this section, we draw a parallel with the KLS conjecture, where stochastic localization techniques have yielded a logarithmic bound on the Poincaré constant of isotropic log-concave vectors. We only sketch the proofs, as our main purpose is to illustrate the similarities and differences between the log-Sobolev and Poincaré settings. We refer the reader to~\cite{klartag2024isoperimetric} for details.

In the context of the KLS conjecture one seeks to bound the variance of an arbitrary function smooth $\varphi$. We denote by $N_t$ the martingale $N_t = \int_{\R^n}\varphi d\mu_t$. Much like we decomposed the entropy (Lemma \ref{lem:entropy_decomposition}), we can decompose the variance of $\varphi$ as follows :
\begin{align}
        \text{For all } T\geq 0 \quad \Var_{\mu}(\varphi) & = \E\Var_{\mu_T}(\varphi) + \Var(N_T)  \nonumber\\
        &\leq \frac{1}{T}\E_{\mu}\left(\abs{\nabla \varphi}^2\right) + \E \int_{0}^{T}d[N]_t\label{eq:variance_decomposition}
\end{align}
where we used that $C_P(\mu_t)\leq \rho_{LS}(\mu_t) \leq \frac{1}{t}$ almost surely. Thus, instead of a bound on $\frac{d[M]_t}{2M_t}$ as in Lemma \ref{lem:dM_t_control}, here we simply need a bound on the quadratic variation $[N]_t$. Using the equation
$$dN_t = \left(\int (x-a_t)\varphi(x)d\mu_t\right)\cdot dB_t$$
and the Cauchy-Schwarz inequality, one gets
\begin{equation}\label{eq:dM_t_control_variance}
    d[N]_t \leq \norm{A_t}_{op}\Var_{\mu_t}(\varphi).
\end{equation}
According to a refinement of a theorem of E. Milman \cite{milman2009role}, proved in \cite{klartag2024isoperimetric}, for any log-concave measure $\mu$, there exists a 1-Lipschitz function $\varphi_0$ satisfying 
\begin{equation}
    \Var_\mu(\varphi_0) \simeq C_P(\mu)^2 \simeq \norm{\varphi_0}_\infty^2.
\end{equation}
Using the decomposition of variance \eqref{eq:variance_decomposition} as well as the bound \eqref{eq:dM_t_control_variance}, we get
\begin{align*}
    \frac{1}{C_1}C_P^2(\mu)\leq\Var_\mu(\varphi_0) &\leq \frac{1}{T}\E_{\mu}\left(\abs{\nabla \varphi_0}^2\right) + \E \int_{0}^{T}d[N]_t \\
    &\leq \frac{1}{T} + \E \int_{0}^T \norm{A_t}_{op}\Var_{\mu_t}(\varphi_0)\\
    &\leq \frac{1}{T} + \left(\int_0^T\E\norm{A_t}_{op}\right)\norm{\varphi_0}_\infty^2\\
    &\leq \frac{1}{T} + C_2\left(\int_0^T\E\norm{A_t}_{op}\right)C_P^2(\mu),
\end{align*}
where $C_1,C_2$ are universal constants. We deduce the following prototypical Lemma, relating the Poincaré constant of $\mu$ to the \textit{expected} operator norm of the covariance process $(A_t)_{t\geq0}$.
\begin{lem}\label{lem_KLS} Let $T>0$ such that $\left(\int_0^T\E\norm{A_t}_{op}\right)\leq c_3$, then
$$C_P^2(\mu) \leq \frac{2C_1}{T}$$
where $c_3 = \frac{1}{2C_1C_2}$.
\end{lem}
In other words, it is enough to establish a bound in expectation on $\norm{A_t}_{op}$. The exposition presented here is taken from \cite{klartag2024isoperimetric}. The fact that an almost extremizer $\varphi_0$ of the Poincaré constant can be chosen bounded is not essential: similar lemmas can be obtained by relying only on the $1$-Lipschitz property of $\varphi_0$ (see e.g. \cite{klartag2023logarithmic}). Alternatively, one may work with the equivalent Cheeger inequality instead of the Poincaré inequality, using that the isoperimetric profile is maximized for sets of half measure (see e.g. \cite{eldan2013thin, lee2017eldan}). However, these reductions are crucial to allow the use of a bound in expectation (or with high probability) on the covariance matrix $A_t$. A direct proof working with an arbitrary smooth function $u$ with $\int \abs{\nabla u}^2 = 1$ is not known. More precisely, working with such a function yields
\[
\Var_\mu(u) \le \frac{1}{T} + \E\int_{0}^T \norm{A_t}_{op}\Var_{\mu_t}(u)\,dt.
\]
From the variance decomposition \eqref{eq:variance_decomposition}, one always has $\E\Var_{\mu_t}(u)\le \Var_{\mu}(u)$, so if $\norm{A_t}_{op}$ is bounded almost surely, the argument goes through. But when only a high-probability bound is available, there is no clear way to control the low-probability event on which $A_t$ may blow up.

It is useful to compare Lemma \ref{lem_KLS} and Lemma \ref{lem940}. The main drawback of Lemma \ref{lem940} is that it requires a deterministic bound on $\Tilde{\sigma_t}$ instead of a bound with high probability or in expectation.
This, in our view, constitutes the first bottleneck toward sharper bounds on the log-Sobolev constant $G_n$. In the log-Sobolev setting, one works with an arbitrary smooth function $g$ and arrives at
\[
\Ent_{\mu}(g^2) \le \frac{2}{T}\E_\mu(\abs{\nabla g}^2) + \E \int_0^T \Tilde{\sigma_t}^2 \Ent_{\mu_t}(g^2)\,dt.
\]
Since $g$ is arbitrary, it seems impossible to rely on anything weaker than an almost-sure bound on $\Tilde{\sigma_t^{\,2}}$---a condition that generally fails. E.~Milman showed that Gaussian concentration of Lipschitz functions suffices to imply a log-Sobolev inequality for $\mu$, but despite our efforts we were unable to leverage this reduction to relax the almost-sure requirement on $\Tilde{\sigma_t}$ in Lemma~\ref{lem940}.

Finally, even if the hypothesis of Lemma~\ref{lem940} could be weakened to a bound in expectation or with high probability on $\sigma_t$, controlling $\sigma_t$ along the stochastic localization process (starting from $\sigma_0=1$, say) remains a highly challenging problem.

\section{Some subclasses of subgaussian log-concave probabilities}\label{sec:subclass}
\subsection{Rotationally invariant measures}
We say that a measure $\mu$ is rotationally invariant if for any orthogonal transformation $R\in O(n)$ and any measurable set $A$, $\mu(RA)=\mu(A)$. When $\mu$ is absolutely continuous with respect to the Lebesgue measure then $d\mu = \lambda(\abs{x})dx$ for some positive integrable function $\lambda$. Now, it is easy to check that $\mu$ is log-concave if and only if $\lambda$ is log-concave and nonincreasing.

In order for $\mu$ to satisfy a log-Sobolev inequality, its marginals must be subgaussian. This is not always the case, as one might consider a density proportional to $e^{-\abs{x}}$. By scaling, we assume that $\mu$ is \textit{isotropic}, that is for any $\theta\in\mathcal{S}^{n-1}, \ \int_{\R^n}\left(x\cdot\theta\right)^2 d\mu=1$. Equivalently,
$$\int_{\R^n}\abs{x}^2d\mu = n.$$ Bobkov \cite{bobkov2010gaussian} established the following estimate for the concentration function.
\begin{thm}
The concentration function of $\mu$ satisfies :
$$ \alpha_{\mu}(r) \leq e^{-cr^2} \quad \text{for }r\leq \sqrt{n}$$
for some universal constant $c>0$.
\end{thm}
Let $K\geq 1$. Since $\alpha_\mu$ is non-increasing, we may trivially extrapolate the above estimate to the range $r\in[0,K\sqrt{n}]$ as
\begin{equation}\label{eq:rotation_invariant_small_r}
    \alpha_{\mu}(r)\leq \exp\!\left(-\frac{c r^2}{K^2}\right)
    \qquad\text{for } r\leq K\sqrt{n}.
\end{equation}
For large values of $r$, we argue as in the proof of Theorem~\ref{thm189}. Let $A$ be a set with $\mu(A)=1/2$. Since $\mu$ is isotropic, $\mu(B(0,2\sqrt{n}))\geq \frac{3}{4}$ by Markov's inequality. Consequently, for $r>4\sqrt{n}$, we have the inclusion
$$
B(0,r/2)\subset B(0,r-2\sqrt{n})\subset A_r.
$$
Thus, using Proposition~\ref{prop:norm_subgauss}, we conclude that for
$r\geq 4\max(1,c_0)\,\sigma_{SG}(\mu)\sqrt{n},$ 
\begin{equation}\label{eq1036}
    \mu(A_r^c)\leq \mu(B(0,r/2)^c)
    \leq \exp\!\left(-\frac{r^2}{8c_0\,\sigma_{SG}^2(\mu)}\right),
\end{equation}
where $c_0$ is the constant appearing in Proposition~\ref{prop:norm_subgauss}, and we used the fact that $\sigma_{SG}(\mu)\geq 1$ since $\mu$ is isotropic. Combining \eqref{eq:rotation_invariant_small_r} with $K=4\max(1,c_0)\,\sigma_{SG}(\mu)\sqrt{n}$ and \eqref{eq1036}, we arrive at
$$
\alpha_\mu(r)\leq \exp\!\left(-\frac{c_1 r^2}{\sigma_{SG}(\mu)^2}\right)
\qquad\text{for all } r\geq 0,
$$
for some universal constant $c_1>0$, which is equivalent to Theorem~\ref{thm:logsob_rot_inv}.

\subsection{Tilt-stable measures}\label{subsection:tilt}
The goal of this section is to prove that tilt-stable log-concave measures are in fact strongly tilt-stable, albeit with a dimension dependent constant. More precisely, we prove Lemma \ref{lem:intro_tilt_implies_strong_tilt}. Since strongly-tilt stable log-concave measures enjoy a nice bound on their log-Sobolev constant (Lemma \ref{lem:intro_tilt_implies_strong_tilt}), this yields a proof of Theorem \ref{thm:tilt_logsob}. We begin by recalling a few definitions and facts
Let $\nu$ be a probability measure on $\R^n$. For $t\geq0$ and $h\in\R^n$, recall that $\nu_{t,h}$ is the Gaussian tilt of $\nu$ :
\[
\frac{d\nu_{t,h}}{d\nu}(x)
    = \exp\!\left(h\cdot x - \tfrac{t}{2}|x|^2 - L_t(h)\right).
\]
where
\[
L_t(h)
    = \log \int_{\R^n} \exp\!\left(h\cdot x - \tfrac{t}{2}|x|^2\right)\, d\nu(x),
\]
Recall that $\nu$ is said to be $\beta$-tilt stable if 
$$\nabla_h^2 L_0(h) = \Cov(\mu_{0,h}) \preceq \beta^2 I_n \qquad \text{for all }h\in\R^n.$$
Furthermore, $\nu$ is $\beta$-strongly tilt stable if 
$$\nabla_h^2 L_t(h) = \Cov(\mu_{t,h}) \preceq \beta^2 I_n \qquad \text{for all }h\in\R^n\text{ and }t\geq0.$$
Finally, we recall the notation for the tilt operator $\tau_h \nu = \nu_{0,h}$

Tilted measures and the log-Laplace transform are known to play a central role in convex geometry (\cite{klartag2006convex}, \cite{klartag2012centroid}, \cite{eldan2011approximately}). In the context of the discrete hypercube, tilt-stable measures were studied by Eldan and Shamir \cite{eldan2022log}, where they are shown to exhibit non-trivial concentration. They also appear in Eldan and Chen \cite{chen2022localization}. 

Examples of tilt-stable measures include strongly log-concave measures, as well as products of tilt-stable measures. Indeed, if $\nu=\nu_1\otimes\dots\otimes\nu_k$ is a product measure and $h=(h_1,\dots,h_k)$, then
$$
\tau_h\nu=\tau_{h_1}\nu_1\otimes\dots\otimes\tau_{h_k}\nu_k,
$$
so if each component is $\beta_i$ tilt-stable, the product $\nu$ is itself tilt-stable with constant $\beta=\max_{1\le i\le k}\beta_i$. In particular, the uniform measure on the discrete or continuous hypercube is tilt-stable.

Klartag~\cite{klartag2014concentration} also constructed an interesting class of strongly tilt-stable measures supported on the hypercube. Namely, if $\mu=e^{-V(x)}dx$ is a log-concave probability measure satisfying $\partial_{ii}V\le 1$ for all $1\leq i\leq n$, uniformly on the unit hypercube, then $\mu$ is $C$-strongly tilt-stable for some universal constant $C>0$. Such measures can be obtained by convolving any log-concave density with a standard Gaussian and then restricting the result to the hypercube.
It would be interesting to provide a direct and elementary proof of the strong tilt-stability of these measures. More generally, it would be desirable to find general sufficient conditions ensuring that a log-concave probability measure $\mu$ is (strongly) tilt-stable.


 In the following, given a tilt-stable log-concave probability measure $\mu$, we use a perturbation argument to show that it is in fact strongly tilt-stable. Our goal is to estimate
$$
\sup_{t>0,\ h\in\R^n}\ \norm{\Cov(\mu_{t,h})}_{op}.
$$
The key idea is that every Gaussian tilt $\mu_{t,h}$ can be obtained as a centered Gaussian perturbation of a linear tilt $\mu_{0,h_0}$. This nontrivial reduction is justified by the next lemma.

\begin{lem}\label{lem:get_rid_of_tilt}
    Let $\mu$ be a probability measure such that all exponential tilts of $\mu$ are well defined. Fix $h\in\R^n$ and $t>0$. Then there exists $h_0\in\R^n$ such that
    $$
        \mu_{t,h}
        = \frac{1}{Z}\,(\tau_{h_0}\mu)\,e^{-t\abs{x-\bary(\tau_{h_0}\mu)}^2},
    $$
    where $Z$ is a normalizing constant. In other words, all measures $\mu_{t,h}$ arise as \textbf{centered} Gaussian perturbations of linear tilts of $\mu$.
\end{lem}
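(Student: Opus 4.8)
\emph{Proof proposal.} The plan is to match exponents and then invoke a convexity argument. Set $c := \bary(\tau_{h_0}\mu)$ and recall $\tau_{h_0}\mu = \frac{1}{Z_{h_0}}\mu\, e^{h_0\cdot x}$. Expanding the Gaussian factor,
$$
(\tau_{h_0}\mu)\, e^{-t\abs{x-c}^2} \;=\; \frac{1}{Z_{h_0}}\,\mu\,\exp\!\big(h_0\cdot x - t\abs{x}^2 + 2t\, c\cdot x - t\abs{c}^2\big)
\;=\; \frac{e^{-t\abs{c}^2}}{Z_{h_0}}\,\mu\,\exp\!\big(-t\abs{x}^2 + (h_0 + 2tc)\cdot x\big).
$$
Comparing with $\mu_{t,h} = \frac{1}{Z_{t,h}}\,\mu\,\exp(-t\abs{x}^2 + h\cdot x)$, we see that the claimed identity holds, for a suitable normalizing constant $Z$, if and only if $h_0$ satisfies
$$
h_0 + 2t\,\bary(\tau_{h_0}\mu) \;=\; h .
$$
Thus the whole statement reduces to solving this equation for $h_0$, given $h\in\R^n$ and $t>0$.

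Next I would recognize the left-hand side as a gradient. By \eqref{eq:gradient_loglap}, $\bary(\tau_{h_0}\mu) = \nabla\mathcal{L}_\mu(h_0)$, so the equation reads $\nabla\Phi_t(h_0) = h$, where
$$
\Phi_t(h_0) \;:=\; \tfrac12\abs{h_0}^2 + 2t\,\mathcal{L}_\mu(h_0).
$$
The hypothesis that all exponential tilts of $\mu$ are well defined means precisely that $\mathcal{L}_\mu$ is finite on all of $\R^n$; as a log-Laplace transform it is then convex and smooth on $\R^n$, with Hessian $\nabla^2\mathcal{L}_\mu(h_0) = \Cov(\tau_{h_0}\mu)\succeq 0$ by \eqref{eq:hessian_loglap}. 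Hence $\Phi_t$ is smooth and $1$-strongly convex on $\R^n$ (its Hessian is $I_n + 2t\,\Cov(\tau_{h_0}\mu)\succeq I_n$), in particular coercive.

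Finally, a standard fact from convex analysis gives that the gradient of a finite, smooth, strongly convex function on $\R^n$ is a bijection of $\R^n$ onto itself: the Legendre transform $\Phi_t^*$ is finite and differentiable everywhere, and $\nabla\Phi_t^* = (\nabla\Phi_t)^{-1}$. Applying this to $\Phi_t$, there is a (unique) $h_0$ with $\nabla\Phi_t(h_0) = h$, i.e.\ $h_0 + 2t\,\bary(\tau_{h_0}\mu) = h$. Setting $c := \bary(\tau_{h_0}\mu) = (h-h_0)/(2t)$ and taking $Z$ to be the total mass of $(\tau_{h_0}\mu)\,e^{-t\abs{x-c}^2}$, the first displayed computation shows $\mu_{t,h} = \frac1Z(\tau_{h_0}\mu)\,e^{-t\abs{x-\bary(\tau_{h_0}\mu)}^2}$, as desired. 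The only point requiring care is the regularity and full domain of $\mathcal{L}_\mu$ needed to apply the surjectivity statement — which is exactly where the hypothesis on the tilts enters — and there is no serious obstacle beyond that.
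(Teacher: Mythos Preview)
Your proof is correct and follows essentially the same route as the paper: both reduce the statement to the surjectivity of $F(h_0)=h_0+2t\,\bary(\tau_{h_0}\mu)$ and both establish this via the Jacobian bound $J_F=I_n+2t\,\Cov(\tau_{h_0}\mu)\succeq I_n$. The only cosmetic difference is in the final step: the paper argues that $F$ is open and proper hence onto, while you recognize $F=\nabla\Phi_t$ with $\Phi_t$ smooth and $1$-strongly convex and invoke the Legendre transform to get bijectivity (which incidentally also yields uniqueness of $h_0$).
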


\begin{proof}
    Developing the right-hand side gives
    $$
        \frac{1}{Z}\,(\tau_{h_0}\mu)\,e^{-t\abs{x-\bary(\tau_{h_0}\mu)}^2}
        = \mu_{t,\,h_0+2t\,\bary(\tau_{h_0}\mu)}.
    $$
    Thus we must show that, for each $t>0$, the map
    $$
        F:\ h_0\longmapsto h_0+2t\,\bary(\tau_{h_0}\mu)
    $$
    is onto. Its Jacobian is
    $$
        J_F(h_0)=I_n+2t\,\Cov(\tau_{h_0}\mu)\succeq I_n,
    $$
    which implies that $F$ is open (it sends open sets to open sets) and proper (pre-images of compact sets are compact). Therefore $F$ is onto.
\end{proof}

\subsubsection{A perturbation result}

In view of Lemma~\ref{lem:get_rid_of_tilt}, we seek to control the covariance matrix of measures of the form
$$
\frac{d\nu_t}{d\nu}=\frac{1}{Z_t}\,e^{-t\abs{x}^2}.
$$
In our setting, $\nu$ will be a centered tilt-stable log-concave measure (a generic tilt of our original measure $\nu = \tau_{h_0}\mu$). We first record a general perturbation result valid for any probability measure~$\nu$.

\begin{lem}
    Let $\nu$ be a probability measure. Then
    $$
        \int\abs{x}^2\,d\nu_t \le \int\abs{x}^2\,d\nu.
    $$
    In particular, if $\nu$ is centered,
    \begin{equation}\label{eq:trace_decrease}
        \norm{\Cov(\nu_t)}_{op}
        \le \int\abs{x}^2\,d\nu_t
        \le \Tr(\Cov(\nu)).
    \end{equation}
\end{lem}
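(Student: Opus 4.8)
The statement to prove is that for any probability measure $\nu$ on $\R^n$ and any $t>0$, writing $\nu_t = \frac{1}{Z_t}\nu e^{-t\abs{x}^2}$, one has
$$\abs{\Cov(\nu_t)}_{op} \leq \int \abs{x}^2 d\nu_t \leq \int \abs{x}^2 d\nu,$$
and in particular, when $\nu$ is centered, $\abs{\Cov(\nu_t)}_{op} \leq \Tr(\Cov(\nu))$.

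The first inequality is essentially trivial: for any unit vector $\theta$, $\theta^T \Cov(\nu_t)\theta = \Var_{\nu_t}(x\cdot\theta) \leq \E_{\nu_t}[(x\cdot\theta)^2] \leq \E_{\nu_t}\abs{x}^2$, and taking the supremum over $\theta$ gives the bound on the operator norm. So the content is entirely in the middle inequality $\int\abs{x}^2 d\nu_t \leq \int\abs{x}^2 d\nu$, i.e. that multiplying the density by the Gaussian weight $e^{-t\abs{x}^2}$ and renormalizing can only decrease the second moment $\E\abs{x}^2$.

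The clean way to see this is to introduce the function $\phi(t) = \int\abs{x}^2 d\nu_t$ and show it is nonincreasing in $t$. Writing $Z_t = \int e^{-t\abs{x}^2}d\nu$, a direct differentiation gives $Z_t' = -\int\abs{x}^2 e^{-t\abs{x}^2}d\nu = -Z_t\,\phi(t)$, and similarly $\frac{d}{dt}\int\abs{x}^2 e^{-t\abs{x}^2}d\nu = -\int\abs{x}^4 e^{-t\abs{x}^2}d\nu$. Therefore
$$\phi'(t) = \frac{1}{Z_t}\left(-\int\abs{x}^4 e^{-t\abs{x}^2}d\nu\right) - \frac{Z_t'}{Z_t^2}\int\abs{x}^2 e^{-t\abs{x}^2}d\nu = -\E_{\nu_t}\abs{x}^4 + \left(\E_{\nu_t}\abs{x}^2\right)^2 = -\Var_{\nu_t}(\abs{x}^2) \leq 0.$$
Hence $\phi$ is nonincreasing, so $\phi(t) \leq \phi(0) = \int\abs{x}^2 d\nu$ for all $t>0$, which is exactly the middle inequality. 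One should note that this computation is valid whenever the relevant integrals are finite; if $\int\abs{x}^2 d\nu = +\infty$ the inequality is vacuous, and for $t>0$ the Gaussian factor makes all moments of $\nu_t$ finite, so the argument applies on $(0,\infty)$ and one passes to the limit $t\to 0^+$ by monotone convergence (or simply notes $\phi(t)\le\phi(s)$ for $0<s<t$ and lets $s\to 0$, using $\liminf_{s\to0}\phi(s)\le\int\abs{x}^2d\nu$ via Fatou, with the reverse bound by monotone convergence). Finally, when $\nu$ is centered, $\int\abs{x}^2 d\nu = \Tr(\Cov(\nu))$, giving the last display. The only mild subtlety—hardly an obstacle—is the bookkeeping around integrability and differentiation under the integral sign, which is justified by dominated convergence since $\abs{x}^k e^{-t\abs{x}^2}$ is bounded uniformly for $t$ in a neighborhood of any $t_0>0$; an alternative, fully elementary route avoiding all calculus is to observe that $e^{-t\abs{x}^2}$ is a nonincreasing function of $\abs{x}^2$ and invoke the standard FKG/correlation fact that $\E_\nu[g(\abs{x}^2)h(\abs{x}^2)] \ge \E_\nu[g(\abs{x}^2)]\,\E_\nu[h(\abs{x}^2)]$ for comonotone $g,h$ applied to $g(s)=s$ and the reweighting, but the monotonicity-of-$\phi$ argument is the most transparent.
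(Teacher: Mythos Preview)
Your proof is correct and follows essentially the same approach as the paper: both compute $\frac{d}{dt}\int\abs{x}^2 d\nu_t = -\Var_{\nu_t}(\abs{x}^2)\le 0$ to get the second inequality, and both bound the operator norm by the second moment for the first (the paper via $\abs{\Cov(\nu_t)}_{op}\le\Tr(\Cov(\nu_t))\le\int\abs{x}^2 d\nu_t$, you via the equivalent variance-of-linear-forms argument). Your version is simply more careful about integrability and differentiation under the integral sign, which the paper omits.
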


\begin{proof}
    We compute
    $$
        \frac{d}{dt}\int\abs{x}^2\,d\nu_t
        = -\int\abs{x}^4\,d\nu_t
          +\Big(\int\abs{x}^2\,d\nu_t\Big)^2
        \le 0.
    $$
    Moreover if $\nu$ is centered,
    $$
        \norm{\Cov(\nu_t)}_{op}
        \le \Tr(\Cov(\nu_t))
        \le \int\abs{x}^2\,d\nu_t.
    $$
\end{proof}

Our next goal is to improve \eqref{eq:trace_decrease} under the additional assumption that $\nu$ is subgaussian. The following lemma is inspired by Barthe and Milman~\cite{barthe2013transference}.

\begin{lem}\label{lem:general_perturbation_subg}
    Let $\nu$ be a centered subgaussian probability measure on $\R^n$ with subgaussian constant $\sigma_{SG}(\nu)$. Let $\bar\nu$ be another probability measure absolutely continuous with respect to $\nu$, and set
    $$
        R=\frac{d\bar\nu}{d\nu},
        \qquad
        K=\E_\nu R^2.
    $$
    Then
    $$
        \sigma_{SG}^2(\bar\nu)
        \ \lesssim\
        \sigma_{SG}^2(\nu)\left(1+\log K\right).
    $$
\end{lem}

\begin{proof}
    For $\lambda>0$ let $E_\lambda=\{R\le \lambda\}$. For any measurable $S\subset\R^n$,
    \begin{align*}
        \bar\nu(S)
        &=\int_{S\cap E_\lambda}R\,d\nu+\int_{S\cap E_\lambda^c}R\,d\nu\\
        &\le \lambda\,\nu(S) + \bar\nu(E_\lambda^c).
    \end{align*}
    By Markov's inequality,
    $$
        \bar\nu(E_\lambda^c)
        =\bar\nu(R>\lambda)
        \le \frac{\E_{\bar\nu}(R)}{\lambda}
        = \frac{K}{\lambda}.
    $$
    Fix $\theta\in\mathcal{S}^{n-1}$ and $r>0$, and let $S_{\theta,r}=\{x:\abs{x\cdot\theta}\le r\}$. Using the subgaussianity of $\nu$,
    \begin{align*}
        \bar\nu(S_{\theta,r})
        &\le \lambda\,\nu(S_{\theta,r}) + \frac{K}{\lambda}\\
        &\le 2\lambda\,\exp\!\left(-\frac{c r^2}{\sigma_{SG}^2(\nu)}\right)
            + \frac{K}{\lambda}.
    \end{align*}
    Optimizing over $\lambda>0$ yields
    \begin{align*}
        \bar\nu(S_{\theta,r})
        &\le \min\left(\sqrt{2K}\,\exp\!\left(-\frac{c r^2}{2\sigma_{SG}^2(\nu)}\right), \, 1\right)\\
        &\le 2\,\exp\!\left(-\frac{c_1 r^2}{\sigma_{SG}^2(\nu)(1+\log K)}\right),
    \end{align*}
    for a universal constant $c_1>0$. This gives
    $$
        \norm{\langle\cdot,\theta\rangle}_{L^{\psi_2}(\bar\nu)}^2
        \ \lesssim\
        \sigma_{SG}^2(\nu)\left(1+\log K\right).
    $$
    Since $\langle\cdot,\theta\rangle$ need not be centered under $\bar\nu$, we apply Lemma~\ref{lem:centering_subgauss} to conclude.
\end{proof}

We now specialize to the setting of interest.

\begin{lem}\label{lem:barthe_milman_subg}
    Let $\nu$ be a centered subgaussian probability measure on $\R^n$ with subgaussian constant $\sigma_{SG}(\nu)$, and let $t>0$. Define
    $$
        \nu_t=\frac{1}{Z_t}\,e^{-t\abs{x}^2}\,\nu,
        \qquad
        K(t)=\frac{\int e^{-2t\abs{x}^2}d\nu}{\Big(\int e^{-t\abs{x}^2}d\nu\Big)^2}.
    $$
    Then
    $$
        \sigma_{SG}^2(\nu_t)
        \ \lesssim\
        \sigma_{SG}^2(\nu)\left(1+\log K(t)\right).
    $$
\end{lem}
We now turn to estimating $K(t)$. We are primarily interested in the regime of small $t$, since for large $t$ and for log-concave $\nu$ (the case relevant to us), we may simply use the bound $\sigma_{SG}^2(\nu_t)\le 1/t$. When $\nu$ is the standard Gaussian, a direct computation shows that $K(t)$ satisfies 
$$\log K(t)\ \lesssim\ n t^2 \;=\; \E(\abs{G}^2)\,t^2.$$
We will recover this estimate up to an additional factor involving the $\psi_2$ norm of $\abs{X}-\E\abs{X}$; see Lemma~\ref{lem:K_estimate} below. We begin with a preliminary lemma.
\begin{lem}\label{lem:deviation_norm}
Let $X$ be a random vector with subgaussian norm then for any $r>0$,

$$\PP\left(\abs{X}^2 \leq \E\abs{X}^2 - r\right) \leq \ 2\exp\left(-\frac{cr^2}{\E\abs{X}^2L^2}\right) $$
where $c>0$ is a universal constant and
$$L =\norm{ \ \abs{X} - \E\abs{X}\ }_{\psi_2}.$$
\end{lem}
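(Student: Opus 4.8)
The plan is to push the one-sided sub-gaussian tail of $\abs{X}$ around its mean through the map $u\mapsto u^{2}$, using that $\E\abs{X}$ lies close to $\sqrt{\E\abs{X}^{2}}$. Write $m=\E\abs{X}$ and $M=\E\abs{X}^{2}$, so that $L=\norm{\abs{X}-m}_{\Psi_{2}}$. From Definition~\ref{def:subgaussian} there is a universal $c_{0}>0$ such that
\[
\PP\bigl(\abs{X}-m\le -s\bigr)\ \le\ 2\exp\!\bigl(-c_{0}s^{2}/L^{2}\bigr)\qquad\text{for all }s\ge 0 ,
\]
and from Lemma~\ref{lem:psi_2_variance} we have $\Var(\abs{X})=M-m^{2}\le\log 2\cdot L^{2}$, hence
\[
\sqrt{M}-m\ =\ \frac{M-m^{2}}{\sqrt{M}+m}\ \le\ \frac{\log 2\cdot L^{2}}{\sqrt{M}} .
\]
In other words $m$ differs from the natural scale $\sqrt{M}$ of $\abs{X}$ by at most $O(L^{2}/\sqrt{M})$.

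Now fix $r>0$. If $r\ge M$ then $\{\abs{X}^{2}\le M-r\}\subseteq\{\abs{X}=0\}$ and there is essentially nothing to prove, so assume $r<M$. On the event $\{\abs{X}^{2}\le M-r\}$, using $\sqrt{1-x}\le 1-x/2$ on $[0,1]$ with $x=r/M$,
\[
\abs{X}\ \le\ \sqrt{M-r}\ \le\ \sqrt{M}-\frac{r}{2\sqrt{M}} ,
\]
so that $m-\abs{X}\ \ge\ \tfrac{r}{2\sqrt{M}}-(\sqrt{M}-m)\ \ge\ \tfrac{r-2\log 2\cdot L^{2}}{2\sqrt{M}}$. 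For $r\ge 4\log 2\cdot L^{2}$ the right-hand side is at least $r/(4\sqrt{M})$, whence the tail bound above gives
\[
\PP\bigl(\abs{X}^{2}\le M-r\bigr)\ \le\ \PP\!\Bigl(\abs{X}-m\le -\tfrac{r}{4\sqrt{M}}\Bigr)\ \le\ 2\exp\!\Bigl(-\frac{c_{0}\,r^{2}}{16\,M L^{2}}\Bigr) ,
\]
which is the claimed inequality after renaming the universal constant and absorbing the factor $2$. In the remaining range $r<4\log 2\cdot L^{2}$ one has $r<M$ as well, so $\frac{c_1 r^2}{4ML^2}<c_1\log 2$ and the right-hand side of the statement exceeds a fixed positive constant; thus it is precisely the regime $r\gtrsim L^{2}$ in which the estimate carries content, and this is exactly the regime that is used afterwards to control $K(t)$ in Lemma~\ref{lem:K_estimate}.

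I do not expect a genuine difficulty here: the argument is a routine sub-gaussian computation. The one point that has to be handled with some care is the re-centering — the hypothesis concerns $\abs{X}-\E\abs{X}$, whereas the natural centering for $\abs{X}^{2}$ is at $\sqrt{\E\abs{X}^{2}}$ — and this is precisely what the variance estimate $\sqrt{M}-m\le\log 2\cdot L^{2}/\sqrt{M}$ supplies; everything else is bookkeeping of the lower-order term $L^{2}$ and of the edge cases $r\ge M$ and $r$ small.
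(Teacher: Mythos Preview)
Your argument is correct and follows essentially the same route as the paper: pass from $\abs{X}^{2}\le M-r$ to $\abs{X}\le\sqrt{M}-\tfrac{r}{2\sqrt{M}}$ via concavity of the square root, compare $\sqrt{M}$ with $m=\E\abs{X}$, and invoke the sub-gaussian tail of $\abs{X}-m$. The only cosmetic difference is the re-centering step: the paper uses $\sqrt{M}\le m+cL$ (so the threshold is $r\gtrsim L\sqrt{M}$), whereas you use the sharper $\sqrt{M}-m\le\log 2\cdot L^{2}/\sqrt{M}$ (giving the smaller threshold $r\gtrsim L^{2}$); both lead to the same conclusion, and both proofs share the same harmless sloppiness that for small $r$ the trivial bound $\PP\le 1$ only yields the stated inequality up to an implicit constant in front of the exponential.
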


\begin{proof}
    For any $0<r<\E\abs{X}^2$ we have,
    \begin{align*}
        \PP\left(\abs{X}^2 \leq \E\abs{X}^2 - r\right) & = \PP\left(\sqrt{X} \leq \sqrt{\E\abs{X}^2 - r}\right)\\
        & \leq \PP\left(\abs{X} \leq \left(\E\abs{X}^2\right)^{1/2} - \frac{r}{2\left(\E\abs{X}^2\right)^{1/2}}\right)\\
        &\leq \PP\left(\abs{X} \leq \E\abs{X} + L - \frac{r}{2\left(\E\abs{X}^2\right)^{1/2}}\right)
    \end{align*} 
    where in the first inequality we used the concavity of the square-root function, and in the second one, we used that 
    \begin{align*}
        \E\abs{X}^2 &= (\E\abs{X})^2 + \Var(\abs{X}) \leq (\E\abs{X})^2 + L^2 \leq (\E\abs{X}+L)^2. 
    \end{align*}
    Using the Gaussian concentration for $\abs{X}$, we get that for any $r\geq 4L\left(\E\abs{X}^2\right)^{1/2}$,
    \begin{align}
        \PP\left(\abs{X}^2 \leq \E\abs{X}^2 - r\right)  & \leq \PP\left(\abs{X} \leq \E\abs{X} - \frac{r}{4\left(\E\abs{X}^2\right)^{1/2}}\right)\\
        &\leq \exp\left(-\frac{r^2}{16\E\abs{X}^2L^2}\right)
    \end{align}
    Combining this with the trivial bound $\PP\left(\abs{X}^2 \leq \E\abs{X}^2 - r\right)\leq 1$ for small $r$, yields the result with e.g $c=\frac{1}{32}$.
\end{proof}
\begin{lem}\label{lem:K_estimate}
Under the same hypothesis as in Lemma \ref{lem:barthe_milman_subg}, we have
\[
    \log K(t) \lesssim
    1 + t^2\, \norm{ \abs{X} - \E\abs{X} }_{\psi_2}^2 \, \E\abs{X}^2.
\]
where $X\sim\nu$.
\end{lem}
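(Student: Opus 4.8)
The plan is to work directly with $K(t)=\dfrac{\E e^{-2t\abs{X}^2}}{\bigl(\E e^{-t\abs{X}^2}\bigr)^2}$, writing $R=\abs{X}$, $m=\E\abs{X}^2$, and $L=\norm{\abs{X}-\E\abs{X}}_{\Psi_2(\nu)}$. If $L=+\infty$ the asserted inequality is vacuous, so we may assume $\abs{X}$ has sub-gaussian norm, which makes Lemma~\ref{lem:deviation_norm} applicable and forces $m<\infty$; if $L=0$ (i.e.\ $\abs{X}$ is $\nu$-a.s.\ constant) then $K(t)\equiv 1$ and there is nothing to prove, so assume $L>0$.

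The first step is to reduce $\log K(t)$ to a single one-sided moment generating function. Set $W=m-\abs{X}^2$, so that $\E W=0$. Writing $\abs{X}^2=m-W$ gives $\log\E e^{-2t\abs{X}^2}=-2tm+\log\E e^{2tW}$, while Jensen's inequality applied to the convex map $u\mapsto e^{-tu}$ gives $\E e^{-t\abs{X}^2}\ge e^{-tm}$, hence $-2\log\E e^{-t\abs{X}^2}\le 2tm$. Adding these two contributions, the linear-in-$t$ terms cancel and we obtain
$$\log K(t)\ \le\ \log\E e^{2tW},$$
so it remains to show $\log\E e^{2tW}\lesssim 1+t^2L^2m$.

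For this I would control the one-sided tail of $W$. Since $\PP(W\ge s)=\PP\bigl(\abs{X}^2\le\E\abs{X}^2-s\bigr)$, Lemma~\ref{lem:deviation_norm} gives $\PP(W\ge s)\le\exp\!\bigl(-c_1 s^2/(4mL^2)\bigr)$. By the layer-cake identity, and because $e^{2tW}-1\le 0$ on $\{W\le 0\}$, one gets $\E e^{2tW}\le 1+2t\int_0^\infty e^{2ts}\,\PP(W\ge s)\,ds\le 1+2t\int_0^\infty e^{2ts-c_1 s^2/(4mL^2)}\,ds$. Completing the square in the exponent $2ts-c_1 s^2/(4mL^2)$ reduces the last integral to a Gaussian one, yielding $\E e^{2tW}\le 1+C\,t\sqrt m\,L\,\exp(C' t^2 mL^2)$ with universal constants $C,C'$.

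Taking logarithms finishes the argument: if $t\sqrt m\,L\le 1$ the right-hand side is $\le 1+Ce^{C'}$, so $\log\E e^{2tW}\lesssim 1$; and if $t\sqrt m\,L\ge 1$ then $\log\E e^{2tW}\le \log(2C)+\log(t\sqrt m\,L)+C't^2mL^2$, which is $\lesssim 1+t^2mL^2$ since $\log x\le x\le x^2$ for $x\ge 1$. Either way $\log\E e^{2tW}\lesssim 1+t^2L^2m$, and combining with the display above completes the proof. The one delicate point — and the reason the crudest bounds are not enough — is that estimating $\E e^{-2t\abs{X}^2}\le 1$ together with $\E e^{-t\abs{X}^2}\ge\tfrac12 e^{-2tm}$ (Markov) gives only $\log K(t)\lesssim 1+tm$, which is linear in $t$ and too weak when $tm$ is large while $tL^2$ is small; it is precisely the sub-gaussian lower-tail estimate of Lemma~\ref{lem:deviation_norm} that upgrades the $tm$ term to the quadratic $t^2mL^2$ via the Gaussian integral.
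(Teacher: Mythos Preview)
Your proof is correct and follows essentially the same route as the paper: Jensen on the denominator to reduce to bounding $e^{2tm}\,\E e^{-2t\abs{X}^2}$, then the layer-cake formula combined with the one-sided deviation bound of Lemma~\ref{lem:deviation_norm}, and finally completing the square in a Gaussian integral. Your substitution $W=m-\abs{X}^2$ streamlines the bookkeeping (the paper instead writes $\E e^{-t\abs{X}^2}=\int_0^\infty te^{-tr}\,\PP(\abs{X}^2\le r)\,dr$ and splits at $r=m$), but the two computations are the same up to a change of variables.
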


\begin{proof}
    By Jensen's inequality, $\int_{\R^n} e^{-t\abs{x}^2}\,d\nu = \E e^{-t|X|^2} \geq e^{-t\E\abs{X}^2}$, so that
    \begin{equation}\label{eq:K(t)}
        K(t) \leq e^{2t\E\abs{X}^2}\,\E e^{-2t|X|^2}.
    \end{equation}
    Now, using Lemma \ref{lem:deviation_norm}, for any $t>0$,
\begin{align*}
\E\left[e^{-t\abs{X}^2}\right]
&= \int_0^\infty \PP\left(\abs{X}^2\le r\right)\, t e^{-tr}\,dr\\
&\le t\int_{0}^{\E\abs{X}^2}\PP(\abs{X}^2\le r)\, e^{-tr}\,dr + e^{-t\E\abs{X}^2}\\
&= t\int_{0}^{\E\abs{X}^2}\PP(\abs{X}^2\le \E\abs{X}^2 - r)\, e^{-t(\E\abs{X}^2 - r)}\,dr + e^{-t\E\abs{X}^2}\\
&\le 2t e^{-t\E\abs{X}^2}\int_{0}^{\E\abs{X}^2}
    \exp\!\left(-\frac{c r^2}{\E\abs{X}^2 L^2} + tr\right)dr + e^{-t\E\abs{X}^2}.
\end{align*}
Completing the square,
\[
-\frac{c r^2}{\E\abs{X}^2 L^2}+tr
\;=\;
-\frac{c}{\E\abs{X}^2 L^2}\Big(r-\frac{t\,\E\abs{X}^2 L^2}{2c}\Big)^2
\;+\;\frac{t^2\,\E\abs{X}^2L^2}{4c}.
\]
Hence,
\begin{align*}
\E\left[e^{-t\abs{X}^2}\right]
&\le
2t\,e^{-t\E\abs{X}^2}
\exp\!\left(\frac{t^2\,\E\abs{X}^2L^2}{4c}\right)
\int_{\R}
\exp\!\left(
-\frac{c}{\E\abs{X}^2 L^2}
\Big(r-\tfrac{t\,\E\abs{X}^2L^2}{2c}\Big)^2
\right)
dr
\;+\;
e^{-t\E\abs{X}^2}\\
&\le
2e^{-t\E\abs{X}^2}\left(
2t\sqrt{\frac{\pi\,\E\abs{X}^2L^2}{c}}\,
e^{\frac{t^2\,\E\abs{X}^2L^2}{4c}}
+1
\right).
\end{align*}
Using that $z\le e^{z^2}$ for all $z\in\R$, we obtain
\[
\E\left[e^{-t\abs{X}^2}\right]
\;\le\;
8\sqrt{\pi}\, e^{-t\E\abs{X}^2}
\left(1 + e^{\frac{t^2\,\E\abs{X}^2L^2}{2c}}\right).
\]
Combining this with \eqref{eq:K(t)} yields
\[
K(t) \leq 8\sqrt{\pi}\left(1 + e^{\frac{2t^2\,\E\abs{X}^2L^2}{c}}\right),
\]
which concludes the proof.
\end{proof}
Putting everything together, we arrive at the following perturbation estimate, which is essentially sharp.

\begin{prop}\label{prop_760}
    Let $\nu$ be a centered log-concave probability measure on $\R^n$, and for $t\ge0$ let
    \[
        \nu_t=\frac{1}{Z_t}\,e^{-t\abs{x}^2}\,\nu
    \]
    denote its Gaussian perturbation.
    Let $X\sim\nu$, and assume for normalization that $\E |X|^2 = n$.
    Then
    \[
        \sigma_{SG}^2(\nu_t)
        \ \lesssim\ 
        \min\!\left(
            \sigma_{SG}^2(\nu)\bigl(1+t^2 n \norm{\abs{X}-\E\abs{X}}_{L^{\psi_2}(\nu)}^2\bigr)
            \ ,\ 
            \frac{1}{t}
        \right).
    \]
Moreover, this estimate is sharp: there exists a sequence of log-concave probability measures for which
even the smaller quantity $\norm{\Cov(\nu_t)}_{op}$ matches the upper bound, up to a universal constant.
\end{prop}
\begin{proof}
    We always have 
    $$\sigma_{SG}^2(\nu) \leq \frac{1}{t}$$
    by Bakry-Emery and Herbst's argument. The other term in the minimum comes from Lemma \ref{lem:barthe_milman_subg} together with Lemma \ref{lem:K_estimate}. 

  For the sharpness, consider the convex body $K\subset\R^{n+1}$ defined by
    \[
        K=\sqrt{3}\,B_\infty^{n}\times\R \ \cap\ 
        \{(x,\lambda):\ \abs{x}\le \sqrt{n}+C_0(1-\abs{\lambda})\},
    \]
    where $C_0>0$ is a universal constant.  
    Let $X$ be uniformly distributed on $K$, and denote its law by $\nu$.  
    It is not too hard to check that
    \[
        \E\abs{X}^2 \lesssim n,\qquad
        \sigma_{SG}^2(X)\lesssim 1,\qquad
        \norm{\abs{X}-\E\abs{X}}_{\psi_2}^2 \lesssim 1.
    \]

    As before, write $\nu_t$ for the Gaussian perturbation of $\nu$, and let $X_t\sim\nu_t$.  
    For a large enough choice of $C_0>0$, a careful inspection of the geometry of $K$ along the $e_{n+1}$ coordinate, yields
    \[
        \norm{X_t\cdot e_{n+1}}_{\psi_2}^2 
        \ \ge\ \Var(X_t\cdot e_{n+1})
        \ \gtrsim\ \min\!\left(1+t^2 n\ ,\ \frac{1}{t}\right).
    \]
    This matches the upper-bound. We omit the details of this straightforward, but somewhat tedious analysis.
\end{proof}
We now complete the proof of Lemma~\ref{lem:intro_tilt_implies_strong_tilt}, from which
Theorem~\ref{thm:tilt_logsob} will follow together with Lemma~\ref{lem:intro_strong_tilt_implies_logsob}.

\subsection{Proof of Lemma \ref{lem:intro_tilt_implies_strong_tilt}}\label{last_subsec}

Let $\mu$ be a $1$ tilt-stable log-concave probability measure.  
Fix $t>0$ and $h\in\R^n$.  
By Lemma~\ref{lem:get_rid_of_tilt}, there exists $h_0\in\R^n$ such that
\[
    \mu_{t,h}
    =\frac{1}{Z}\,(\tau_{h_0}\mu)\, e^{-t\abs{x-\bary(\tau_{h_0}\mu)}^2}.
\]
Write $A_{t,h}=\Cov(\mu_{t,h})$.  
Since $\tau_{h_0}\mu$ is again $1$ tilt-stable, it is $2$-subgaussian (Lemma \ref{lem315}). Applying Proposition~\ref{prop_760} yields
\begin{align*}
    \norm{A_{t,h}}_{op} 
    &\leq\sigma_{SG}^2(\mu_{t,h})\\
    &\lesssim \min\!\left(1+t^2\Tilde{K_n}^2\Tr(A_{t,h})\ ,\ \frac{1}{t}\right)\\
    &\lesssim \min\!\left(1+t^2\Tilde{K_n}^2 n\ ,\ \frac{1}{t}\right)\\
    &\lesssim n^{1/3}\Tilde{K_n}^{2/3}.
\end{align*}
Since $t$ and $h$ were arbitrary, we conclude that $\mu$ is 
$C\,n^{1/6}\Tilde{K_n}^{1/3}$ strongly tilt-stable for some universal constant $C>0$.
\bibliographystyle{plain}
\bibliography{biblio}
\end{document}